\newcommand{\shiftq}{{\textbf{\textrm{q}}}}
\newcommand{\SeqPhi}{(\phi_k)_{k=0}^\infty}
\newcommand{\Zplus}{\{0,1, \ldots \}}
\newcommand{\EndExample}{$\hfill\mbox{\Large$\diamond$}$}
\newcommand{\tabitem}{~~\llap{\textbullet}~~}
\let\NAT@parse\undefined
\title{\LARGE Recursive Least Squares \\ with Variable-Direction Forgetting \\
\large Compensating for the loss of persistency}
\author{Ankit Goel, Adam L. Bruce, and Dennis S. Bernstein\\POC: A.\ Goel (ankgoel@umich.edu)}
\begin{document}
\maketitle
\CSMsetup
\linenumbers \modulolinenumbers[2] 
%\tableofcontents

The ability to estimate parameters depends on two things, namely, identifiability \cite{grewal1976identifiability}, which is ability to distinguish distinct parameters, and persistent excitation, which refers to the spectral content of the signals needed to ensure convergence of the parameter estimates to the true parameter values \cite{Mareels1986,Mareels1987,Mareels1988}.
Roughly speaking, the level of persistency must be commensurate with the number of unknown parameters.
For example, a harmonic input has two-dimensional persistency and thus can be used to identify two parameters, whereas white noise is sufficiently persistent for identifying an arbitrary number of parameters.
Within the context of adaptive control, persistent excitation is needed to avoid bursting  \cite{anderson1985adaptive}; %ensure online modification of the control law  \cite{anderson1985adaptive}.
recent research has focused on relaxing these requirements \cite{Chowdhary2010,chowdhary2014exponential,aranovskiy2017performance}.

Under persistent excitation, a key issue in practice is the rate of convergence, especially under changing conditions.
For example, the parameters of a system may change abruptly, and the goal is to ensure fast convergence to the modified parameter values.
In this case, it turns out that the rate of convergence depends on the ability to forget past parameters and incorporate new information.
As discussed in ``Summary,''  the ability to accommodate new information depends on the ability to forget;
the ability to forget is thus crucial to the ability to learn.
This paradox is widely recognized, and effective forgetting is of intense interest in machine learning \cite{panda2018asp,allred2016forcedfiring,fremaux2016neuromodulation,han2017deepspikingenergy}.

In the first half of the present article, classical forgetting within the context of recursive least squares (RLS) is considered. 
% The present article considers forgetting within the context of recursive least squares (RLS).
% 
In the classical RLS formulation \cite{AseemRLS,ljung,sayed,astromCCS}, a constant forgetting factor $\lambda\in(0,1]$ can be set by the user.
However, it often occurs in practice that the performance of RLS is extremely sensitive to the choice of $\lambda$, and suitable values in the range $0.99$ to $0.9999$ are typically found by trial-and-error testing.
This difficulty has motivated extensions of classical RLS in the form of variable-rate forgetting \cite{FortescueKershenbaumYdstie, PaleologuBenestySilviu, LeungAndSo,SongLimBaekSung,ParkJunKim,AliHoaggMossbergBernstein,canetti1989convergence}, constant trace adjustment, covariance resetting, and covariance modification \cite{efra,goodwin1983deterministic}.  

In the second half of this article, {\it variable-direction forgetting} (VDF), a technique that complements variable-rate forgetting is considered. 
% The present article considers a technique that complements variable-rate forgetting, namely, {\it variable-direction forgetting} (VDF).
% 
Direction-dependent forgetting has been widely studied within the context of recursive least squares \cite{kulhavy1987restricted,kulhavy1984tracking, kreisselmeier1990stabilized,Cao2000,kubin1988stabilization,bittanti1990convergence,bittanti1990exponential}.
In the absence of persistent excitation, new information is confined to a limited number of directions.
The goal of VDF is thus to determine these directions and thereby constrain forgetting to the directions in which new information is available.
VDF allows RLS to operate without divergence during periods of loss of persistency.

%
% The goal of this tutorial article is to provide a self-contained exposition of directional forgetting that collects and revisits the main results and techniques on this topic.

%
The goal of this tutorial article is to investigate the effect of forgetting within the context of RLS in order to motivate the need for VDF.  
With this motivation in mind, the article develops and illustrates RLS with VDF.
The presentation is intended for graduate students who may wish to understand and apply this technique to system identification for modeling and adaptive control.
Table \ref{tab:Results} and \ref{tab:Examples} summarizes the results and examples in this article. 
Some of the content in this article appeared in preliminary form in \cite{goel2018targeted}.

Although, in practical applications, all sensor measurements are corrupted by noise,
the effect of sensor noise is not considered in this article in order to focus on the loss of  persistency. 
Alternative interpretations of RLS in the special case of zero-mean, white sensor noise are presented in
``RLS as a One-Step Optimal Predictor'' and
``RLS as a Maximum Likelihood Estimator''.

\begin{table}[h!]
    \caption{Summary of definitions and results in this article.}
    \centering
    \begin{tabular}{|l|l|}
        \hline
        Definition \ref{def:persistent_Exc} & Persistently exciting regressor 
        \\ \hline
        Definition \ref{def:LS1} & Lyapunov stable equilibrium
        \\ \hline
        Definition \ref{def:ULS1} & Uniformly Lyapunov stable equilibrium
        \\ \hline
        Definition \ref{def:GAS1} & Globally aymptotically stable equilibrium
        \\ \hline
        Definition \ref{def:GUGS1} & Uniformly globally geometrically stable equilibrium
        \\ \hline
        % Definition \ref{def:ClassK_fuunction} & Class $\SK$ function
        % \\ \hline
        % Definition \ref{def:ClassK_infty_fuunction} & Class $\SK_\infty$ function
        % \\ \hline
        % 
        % 
        % 
        % 
        % 
        Theorem \ref{theorem_RLS}-\ref{theorem_RLS_reverse} &  Recursive least squares (RLS)
        \\ \hline
        Theorem \ref{theo:LS1}-\ref{theo:GUGS1} & Lyapunov stability theorems
        \\ \hline
        % Theorem \ref{def:ULS1} & Lyapunov stability theorem
        % \\ \hline
        % Theorem \ref{def:GAS1} & Lyapunov stability theorem
        % \\ \hline
        % Theorem \ref{def:GUGS1} & Lyapunov stability theorem
        % \\ \hline
        Theorem \ref{prop:RLS_stability_LP} & Lyapunov analysis of RLS for $\lambda \in (0,1)$
        \\ \hline
        Theorem \ref{prop:AS_GS} & Stability analysis of RLS for $\lambda \in (0,1]$ based on $\theta_k$
        \\ \hline
        Theorem \ref{theorem_RLS_VDF} & A Quadratic Cost Function for Variable-Direction RLS
        \\ \hline
        Proposition \ref{prop:Pk_MIL} & Recursive update of $P_{k}\inv$ with uniform-direction forgetting
        \\ \hline
        Proposition \ref{prop:theta_in_RangePHI} & Data-dependent subspace constraint on $\theta_k$
        \\ \hline
        Proposition \ref{prop:Pkinv_bounds_wo_forgetting} & Bounds on $P_k$ for $\lambda = 1$ %when $\SeqPhi$ is persistently exciting
        \\ \hline
        Proposition \ref{prop:Pkinv_bounds} & Bounds on $P_k$ for $\lambda \in (0,1)$
        \\ \hline
        Proposition \ref{prop:Pkinv_boundsconv} & Converse of Proposition \ref{prop:Pkinv_bounds}
        % \\ \hline
        % Proposition \ref{prop:AS_GS} & Stability analysis of RLS based on $\tilde\theta_k$
        \\ \hline
        Proposition \ref{prop:z_converges} & Convergence of $z_k$ with uniform-direction forgetting
        \\ \hline
        Proposition \ref{prop:Ak_PE} & Persistent excitation and $\SA_k$
        \\ \hline
        Proposition \ref{prop:Pk_MIL_VDF} & Recursive update of $P_{k}\inv$ with variable-direction forgetting
        \\ \hline
        Proposition \ref{prop:z_converges_VDF} & Convergence of $z_k$ with variable-direction forgetting
        \\ \hline
        Proposition \ref{prop:Pkinv_bounds_VDF} & Bounds on $P_k$ with variable-direction forgetting
        \\ \hline
    \end{tabular}
    % \vspace{1ex}
    % \centering
    % \caption{Summary of definitions and results in this article.}
    \label{tab:Results}
\end{table}

\begin{table}[h!]
    \caption{Summary of examples in this article.}
    \centering
    \begin{tabular}{|l|l|}
        \hline
        Example \ref{exam:CounterExample} & $P_k$ converges to zero without persistent excitation
        \\ \hline
        Example \ref{exam:PE_Pk_bounds_1} & Persistent excitation and bounds on $P_k^{-1}$
        \\ \hline
        Example \ref{exam:PE_Pk_bounds_2} & Lack of persistent excitation and bounds on $P_k^{-1}$
        \\ \hline
        Example \ref{exam:LoP_zto0_theta_conv} & Convergence of $z_k$ and $\theta_k$
        \\ \hline
        Example \ref{exam:persistency_conditionNumber} & Using $\kappa(P_k)$ to determine whether $\SeqPhi$ is persistently exciting
        \\ \hline
        Example \ref{exam:ConvRate_lambda} & Effect of $\lambda$ on the rate of convergence of $\theta_k$
        \\ \hline
        Example \ref{exam:ScalarEstimation} & Lack of persistent excitation in scalar estimation
        \\ \hline
        Example \ref{exam:LoP_n2} & Subspace constrained regressor
        \\ \hline
        Example \ref{exam:LackofPE_theta_k} & Effect of lack of persistent excitation on $\theta_k$
        \\ \hline
        Example \ref{exam:Persistency_InformationContent} & Lack of persistent excitation and the information-rich subspace  
        \\ \hline
        Example \ref{exam:Persistency_InformationContent_with_TF} & %Bounds on singular values of $P_k$ with variable-direction forgetting %Effect of variable-direction forgetting on the singular values of $P_k$
        Variable-direction forgetting for a regressor lacking persistent excitation
        \\ \hline
        Example \ref{exam:TF_effect_on_theta_k} & Effect of variable-direction forgetting on $\theta_k$
        \\ \hline
    \end{tabular}
    % \vspace{1ex}
    % \centering
    % \caption{Summary of examples in this article.}
    \label{tab:Examples}
\end{table}

%\clearpage
\section{Recursive Least Squares}

Consider the model 
\begin{align}
    y_k 
        =
            \phi_k \theta,
    \label{eq:process}
\end{align}
where, for all $k\ge0,$ $y_k \in \BBR^p$ is the measurement, 
$\phi_k \in \BBR^{p \times n }$ is the regressor matrix, and
$\theta \in \BBR^n$ is the vector of unknown parameters. 
The goal is to estimate $\theta$ as new data become available.
One approach to this problem is to minimize the quadratic cost function
\begin{align} 
    J_k({\hat\theta})
        &\isdef
            \sum_{i=0}^k
            \lambda^{k-i} (y_{i} - \phi_{i} {\hat\theta})^\rmT
            (y_{i} - \phi_{i} {\hat\theta}) 
            + 
            \lambda^{k+1}  ({\hat\theta}-\theta_0) ^\rmT
            R
            ({\hat\theta}-\theta_0),
    \label{eq:J_LS}
\end{align}
where $\lambda \in (0,1]$ is the {\it forgetting factor}, $R \in \BBR^{n \times n}$ is positive definite, and $\theta_0\in\BBR^n$ is the initial estimate of $\theta$. %
The forgetting factor applies higher weighting to more recent data, thereby enhancing the ability of RLS to use incoming data to estimate time-varying parameters. 
The following result is {\it recursive least squares}.

\begin{theo}\label{theorem_RLS}
    For all $k\ge0$, let $\phi_k \in \BBR^{p \times n}$ and $y_k \in \BBR^p$, let $R\in\BBR^{n\times n}$ be positive definite, and define 
    $P_{0} \isdef R^{-1}$, $\theta_0 \in \BBR^n$, and  $\lambda \in (0,1]$.
Furthermore, for all $k \ge 0$, denote the minimizer of \eqref{eq:J_LS} by
\begin{align}
    \theta_{k+1}
        =
            \underset{ \hat\theta \in \BBR^n  }{\operatorname{argmin}} \
            J_k({\hat\theta}).
    \label{eq:theta_minimizer_def}
\end{align}
Then, for all $k \ge 0$, $\theta_{k+1}$ is given by 
    \begin{align}
        P_{k+1}
            &=
                \dfrac{1}{\lambda}P_{k} -
                \dfrac{1}{\lambda}
                    P_{k} \phi_k ^\rmT
                    \left(
                        \lambda I_p + \phi_k  P_{k} \phi_k^\rmT
                    \right)^{-1}
                    \phi_k P_{k}
                ,
        \label{eq:PUpdate} \\
        \theta_{k+1}
            &=
                \theta_{k} +
                P_{k+1} \phi_k^\rmT (y_k - \phi_k \theta_{k}).
        \label{eq:thetaUpdate}
    \end{align}    
\end{theo}

\begin{proof}
    See \cite{AseemRLS}.
    %\EndProof
\end{proof}

The following result is a variation of Theorem \ref{theorem_RLS}, where the updates of $P_k$ and $\theta_k$ are reversed.
\begin{theo}
    \label{theorem_RLS_reverse}
    For all $k\ge0$, let $\phi_k \in \BBR^{p \times n}$ and $y_k \in \BBR^p$, let $R\in\BBR^{n\times n}$ be positive definite, and define 
    $P_{0} \isdef R^{-1}$, $\theta_0 \in \BBR^n$, and  $\lambda \in (0,1]$.
Furthermore, for all $k \ge 0$, denote the minimizer of \eqref{eq:J_LS} by \eqref{eq:theta_minimizer_def}.
Then, for all $k \ge 0$, $\theta_{k+1}$ is given by 
\begin{align}
    \theta_{k+1} 
        &=
            \theta_k  +  P_{k}\phi_k^\rmT (\lambda I + \phi_kP_{k}\phi_k^\rmT )^{-1} (  y_k - \phi_k \theta_k ), \label{eq:theta_update_WithInverse}\\
    P_{k+1} 
        &=
            \frac{ 1}{\lambda }P_{k} -
            \frac{ 1}{\lambda }P_{k}\phi_k^\rmT  
            ( \lambda I +  \phi_kP_{k}\phi_k^\rmT )^{-1} \phi_k P_{k}.
            \label{eq:P_update_WithInverse}
\end{align}  
\end{theo}

\begin{proof}
    See \cite{AseemRLS}.
    %\EndProof
\end{proof}

\begin{prop}
    \label{prop:Pk_MIL}
    Let $\lambda \in (0,\infty)$, and let $(P_k)_{k=0}^\infty$ be a sequence of $n\times n$ positive-definite matrices.
    Then, for all $k\ge0,$ $(P_k)_{k=0}^\infty$ satisfies \eqref{eq:PUpdate} if and only if, for all $k\ge0,$
    $(P_k)_{k=0}^\infty$ satisfies
    \begin{align}
        P_{k+1}^{-1}
            =
                \lambda P_{k}^{-1}
                + 
                \phi_{k}^\rmT \phi_{k}.
        \label{eq:Pk_recursive}
    \end{align}
\end{prop}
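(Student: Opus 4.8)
The plan is to recognize Proposition \ref{prop:Pk_MIL} as a direct application of the matrix inversion lemma (the Sherman--Morrison--Woodbury identity), which for conformable matrices states that $(A + UCV)^{-1} = A^{-1} - A^{-1}U(C^{-1} + VA^{-1}U)^{-1}VA^{-1}$ whenever the indicated inverses exist. The key observation is that the two updates \eqref{eq:PUpdate} and \eqref{eq:Pk_recursive} are precisely the two sides of this identity, so that establishing the identity once yields both implications of the ``if and only if'' simultaneously.

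First I would settle the invertibility bookkeeping, which is the only point requiring care. Since each $P_k$ is positive definite and $\lambda \in (0,\infty)$, the matrix $\phi_k P_k \phi_k^\rmT$ is positive semidefinite, so $\lambda I_p + \phi_k P_k \phi_k^\rmT \succeq \lambda I_p \succ 0$ is invertible; likewise $\lambda P_k^{-1} + \phi_k^\rmT \phi_k \succeq \lambda P_k^{-1} \succ 0$ is invertible. Thus every inverse appearing in \eqref{eq:PUpdate}, in \eqref{eq:Pk_recursive}, and in the Woodbury identity is well defined.

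Next I would apply the identity with the identification $A = \lambda P_k^{-1}$, $U = \phi_k^\rmT$, $C = I_p$, and $V = \phi_k$, so that $A + UCV = \lambda P_k^{-1} + \phi_k^\rmT\phi_k$, which equals $P_{k+1}^{-1}$ under \eqref{eq:Pk_recursive}. Inverting then gives $P_{k+1} = \tfrac{1}{\lambda}P_k - \tfrac{1}{\lambda}P_k\phi_k^\rmT(I_p + \tfrac{1}{\lambda}\phi_k P_k \phi_k^\rmT)^{-1}\tfrac{1}{\lambda}\phi_k P_k$. The remaining step is routine: using $(I_p + \tfrac{1}{\lambda}\phi_k P_k\phi_k^\rmT)^{-1} = \lambda(\lambda I_p + \phi_k P_k\phi_k^\rmT)^{-1}$ to clear the inner inverse and collecting the scalar factors $\tfrac{1}{\lambda}\cdot\lambda\cdot\tfrac{1}{\lambda} = \tfrac{1}{\lambda}$ reproduces \eqref{eq:PUpdate} exactly.

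Finally, because the Woodbury identity is an exact algebraic equality between $(\lambda P_k^{-1} + \phi_k^\rmT\phi_k)^{-1}$ and the right-hand side of \eqref{eq:PUpdate}, and because the inverse of a matrix is unique, $P_{k+1}$ equals the right-hand side of \eqref{eq:PUpdate} if and only if $P_{k+1}^{-1}$ equals $\lambda P_k^{-1} + \phi_k^\rmT\phi_k$. This delivers both directions at once. I expect the main (and essentially only) obstacle to be the $\lambda$-scaling bookkeeping in the inner inverse; beyond verifying that the relevant inverses exist, which the positive-definiteness hypothesis guarantees, there is no genuine difficulty.
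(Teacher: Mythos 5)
Your proposal is correct and follows essentially the same route as the paper: the paper's proof also applies the matrix inversion lemma to $(\lambda P_k^{-1} + \phi_k^\rmT\phi_k)^{-1}$ with the identification $A = \lambda P_k^{-1}$, $U = \phi_k^\rmT$, $C = I_p$, $V = \phi_k$, simplifies the $\lambda$-scaling to recover \eqref{eq:PUpdate}, and obtains the converse by reversing the steps (your uniqueness-of-inverse argument). Your additional care about invertibility of $\lambda I_p + \phi_k P_k\phi_k^\rmT$ and $\lambda P_k^{-1} + \phi_k^\rmT\phi_k$ is a welcome, if minor, refinement over the paper's terser presentation.
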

\begin{proof}
    To prove necessity, it follows from \eqref{eq:Pk_recursive} and matrix-inversion lemma, that
    \begin{align}
        P_{k+1}
            % &= 
            %     (P_{k+1}\inv)\inv 
            % \nn \\
            &=
                (
                \lambda P_{k}\inv
                + 
                \phi_{k}^\rmT \phi_{k}
                )\inv
            \nn \\
            &=
                (\lambda P_{k}\inv)\inv-
                (\lambda P_{k}\inv)\inv \phi_k^\rmT
                (I_p + \phi_k (\lambda P_{k}\inv)\inv \phi_k^\rmT )\inv
                \phi_k (\lambda P_{k}\inv)\inv
            \nn \\
            &=
                \dfrac{1}{\lambda}P_{k} -
                \dfrac{1}{\lambda}
                    P_{k} \phi_k ^\rmT
                    \left(
                        \lambda I_p + \phi_k  P_{k} \phi_k^\rmT
                    \right)^{-1}
                    \phi_k P_{k} .
            \nn
    \end{align}
    Reversing these steps proves sufficiency.   
\end{proof}

Let $k\ge0.$    By defining the {\it parameter error} 
\begin{align}
    \tilde \theta_k
        \isdef 
            \theta_k - \theta,
    \label{eq:est_err}
\end{align}
it follows that 
\begin{align}
    \phi_i \theta_{k} - y_i = \phi_i\tilde\theta_{k}.\label{eqn8}
\end{align}
Using \eqref{eqn8} with $k$ replaced by $k+1$, it follows that the minimum value of $J_k$ is given by
%  ok? yes looks good. 
\begin{align} 
    J_k({\theta_{k+1}})
        &=
            \sum_{i=0}^k
            \lambda^{k-i} 
            \tilde \theta_{k+1}^\rmT \phi_i^\rmT
            \phi_i \tilde \theta_{k+1}
            + 
            \lambda^{k+1}  
            (\tilde \theta_{k+1} - \tilde \theta_0)^\rmT
            R
            (\tilde \theta_{k+1} - \tilde \theta_0).
    \label{eq:J_LS_2}
\end{align}
Furthermore, \eqref{eq:thetaUpdate} and \eqref{eq:est_err} imply that $\tilde \theta_k$ satisfies
\begin{align}
    \tilde \theta_{k+1}
        &= 
            (I_n - P_{k+1} \phi_k^\rmT \phi_k) \tilde \theta_k 
        \label{eq:theta_error1} \\
        &=
            \lambda P_{k+1} P_k^{-1} \tilde \theta_k.
    \label{eq:theta_error2}
\end{align}
Finally, it follows from \eqref{eq:theta_error2} that, for all $k,l\ge0$,
\begin{align}
    \tilde \theta_{k}
        = 
             \lambda^{k-l} P_{k} P_{l}^{-1}  \tilde \theta_{l}.
    \label{eq:theta_error_explicit}
\end{align}

%\clearpage

%
The following result shows that the estimate $\theta_k$ of $\theta$ is constrained to a data-dependent subspace. Let $\SR(A)$ denote the range of the matrix $A$.

\begin{prop}
    \label{prop:theta_in_RangePHI}
    For all $k\ge0$, let $\phi_k \in \BBR^{p \times n}$ and $y_k \in \BBR^p$, let $R\in\BBR^{n\times n}$ be positive definite, let $\theta_0 \in \BBR^n$, let  $\lambda \in (0,1]$, and define $\theta_{k+1}$ by \eqref{eq:theta_minimizer_def}.
    Then, $\theta_{k+1}$ satisfies
    \begin{align}
        \left( \sum_{i=0}^k \lambda^{k-i} \phi_i^\rmT \phi_i + \lambda^{k+1}R \right)
        \theta_{k+1}
            =
                \sum_{i=0}^k \lambda^{k-i} \phi_i^\rmT y_i +
                \lambda^{k+1} R \theta_0.
        \label{eq:Atheta_b}
    \end{align}
    Furthermore, 
    \begin{align}
        \theta_{k+1} 
            \in
                 %\SR([\Phi_k^\rmT \ \ R^{-1} \Phi_k^\rmT  \ \ \theta_0 ]),
                 \SR(\Phi_k^\rmT \Phi_k + R^{-1} \Phi_k^\rmT \Phi_k R^{-1} + \theta_0 \theta_0^\rmT),
        \label{eq:theta_in_range_PHI}
    \end{align}
    where
    \begin{align}
        \Phi_k 
            &\isdef
                [
                    \phi_0^\rmT \ \
                    \cdots \ \
                    \phi_k^\rmT
                ] ^\rmT
                \in \BBR^{(k+1)p \times n}.
    \end{align}
\end{prop}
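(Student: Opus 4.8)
The statement has two parts, and I would establish them in turn. For \eqref{eq:Atheta_b}, the plan is to use first-order optimality. Since $R$ is positive definite, the Hessian of $J_k$ in \eqref{eq:J_LS}, namely $2\left(\sum_{i=0}^k \lambda^{k-i}\phi_i^\rmT\phi_i + \lambda^{k+1}R\right)$, is positive definite, so $J_k$ is a strictly convex quadratic with a unique minimizer $\theta_{k+1}$. I would then compute the gradient
\[
\nabla J_k(\hat\theta) = 2\sum_{i=0}^k \lambda^{k-i}\phi_i^\rmT(\phi_i\hat\theta - y_i) + 2\lambda^{k+1}R(\hat\theta - \theta_0),
\]
set it to zero at $\hat\theta = \theta_{k+1}$, and rearrange to obtain \eqref{eq:Atheta_b} directly. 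This step is routine.

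For \eqref{eq:theta_in_range_PHI}, I would write $A_k \isdef \sum_{i=0}^k \lambda^{k-i}\phi_i^\rmT\phi_i + \lambda^{k+1}R$ and $b_k \isdef \sum_{i=0}^k \lambda^{k-i}\phi_i^\rmT y_i + \lambda^{k+1}R\theta_0$, so that \eqref{eq:Atheta_b} reads $A_k\theta_{k+1} = b_k$ with $A_k$ positive definite. Let $M \isdef \Phi_k^\rmT\Phi_k + R^{-1}\Phi_k^\rmT\Phi_k R^{-1} + \theta_0\theta_0^\rmT$. Since $M$ is symmetric and positive semidefinite, its range is the orthogonal complement of its null space, so it suffices to show that $v^\rmT\theta_{k+1} = 0$ whenever $Mv = 0$. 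The key observation is that, because $M$ is a sum of positive-semidefinite terms, $Mv = 0$ forces each quadratic form to vanish, that is, $\|\Phi_k v\|^2 = \|\Phi_k R^{-1}v\|^2 = (\theta_0^\rmT v)^2 = 0$. Equivalently, $\phi_i v = 0$ and $\phi_i R^{-1}v = 0$ for all $i\in\{0,\ldots,k\}$, and $\theta_0^\rmT v = 0$.

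The crux, and the step I expect to require the most care, is exploiting these three conditions simultaneously through the correct test vector. I would set $u \isdef R^{-1}v$ and premultiply $A_k\theta_{k+1} = b_k$ by $u^\rmT$. On the left, the conditions $\phi_i R^{-1}v = \phi_i u = 0$ annihilate the data terms in $A_k$, leaving $u^\rmT A_k\theta_{k+1} = \lambda^{k+1}u^\rmT R\theta_{k+1} = \lambda^{k+1}v^\rmT\theta_{k+1}$, using symmetry of $R$ and $u^\rmT R = v^\rmT$. On the right, the same conditions kill the data terms in $b_k$, while $\theta_0^\rmT v = 0$ kills the remaining term, giving $u^\rmT b_k = \lambda^{k+1}v^\rmT\theta_0 = 0$. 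Hence $\lambda^{k+1}v^\rmT\theta_{k+1} = 0$, and since $\lambda > 0$ this yields $v^\rmT\theta_{k+1} = 0$. As $v$ was an arbitrary null vector of $M$, this proves $\theta_{k+1} \in \SR(M)$, which is \eqref{eq:theta_in_range_PHI}. The only real subtlety is recognizing that the presence of the cross term $R^{-1}\Phi_k^\rmT\Phi_k R^{-1}$ in $M$ is exactly what licenses the substitution $u = R^{-1}v$; without it, the left-hand reduction would fail.
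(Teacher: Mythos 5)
Your proof is correct, but the route you take for \eqref{eq:theta_in_range_PHI} is genuinely different from the paper's. For \eqref{eq:Atheta_b} the two arguments coincide: strict convexity of the quadratic plus the normal equations (the paper cites Lemma 1 of \cite{AseemRLS} rather than writing out the gradient, but the content is identical). For the range statement, the paper is constructive: it rescales \eqref{eq:Atheta_b} to solve for $\theta_{k+1}$ explicitly as $\left( I_n + \Phi_k^\rmT W_k \Phi_k \right)^{-1}$ applied to a sum of vectors, where $W_k$ is a diagonal weighting, then invokes the sidebar Lemma \ref{lemma:y_in_X}, which states $(I_n + XWX^\rmT)^{-1}y \in \SR([X \ \ y])$, term by term, and finally collapses the resulting sum of ranges via $\SR([\Phi_k^\rmT \ \ R^{-1}\Phi_k^\rmT \ \ \theta_0]) = \SR(\Phi_k^\rmT\Phi_k + R^{-1}\Phi_k^\rmT\Phi_k R^{-1} + \theta_0\theta_0^\rmT)$. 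You instead argue by duality: since $M$ is symmetric positive semidefinite, $\SR(M)$ is the orthogonal complement of the null space of $M$, and you verify $v^\rmT\theta_{k+1} = 0$ for each null vector $v$ by testing the normal equation against $u = R^{-1}v$. Your argument is self-contained (no sidebar lemma, no explicit inverse), and it in fact proves something slightly stronger: you never use the condition $\Phi_k v = 0$ coming from the first summand of $M$, so your computation shows $\theta_{k+1} \in \SR(R^{-1}\Phi_k^\rmT\Phi_k R^{-1} + \theta_0\theta_0^\rmT)$, which is contained in the stated range because ranges of sums of positive-semidefinite matrices add. What the paper's approach buys in exchange is an explicit decomposition of $\theta_{k+1}$ into contributions lying along $\SR([\Phi_k^\rmT \ \ R^{-1}\phi_i^\rmT])$ and $\SR([\Phi_k^\rmT \ \ \theta_0])$, which makes visible how each data pair $(\phi_i,y_i)$ and the prior $\theta_0$ enter the estimate.
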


\begin{proof}
    Note that
    \begin{align}
        J_k({\hat \theta} )
            =
                \hat \theta ^\rmT A_k \hat \theta + 
                \hat \theta ^\rmT b_k  + 
                c_k, \nn
    \end{align}
    where 
    \begin{align}
        A_k
            &\isdef
                \sum_{i=0}^k \lambda^{k-i} \phi_i^\rmT \phi_i + \lambda^{k+1}R, 
            \nn \\
        b_k
            &\isdef
                \sum_{i=0}^k -\lambda^{k-i} \phi_i^\rmT y_i -
                \lambda^{k+1} R \theta_0,
            \nn \\
        c_k
            &\isdef
                \sum_{i=0}^k \lambda^{k-i} y_i^\rmT y_i +
                \lambda^{k+1} \theta_0^\rmT R \theta_0.\nn
    \end{align}
    Since $A_k$ is positive definite, it follows from Lemma 1 in \cite{AseemRLS} that the minimizer $\theta_{k+1}$ of $J_k$ satisfies \eqref{eq:Atheta_b}.

    Next, define $W_k \isdef
                \ {\rm diag} (\lambda^{-1} I_p, \ldots, \lambda^{-1-k}I_p)
                \in \BBR^{(k+1)p \times (k+1)p}$.
                %suppose $R = r I_n$, where $r > 0$, then 
    Using \eqref{eq:Atheta_b} and Lemma \ref{lemma:y_in_X} from ``Three Useful Lemmas,'' it follows that
    \begin{align}
        \theta_{k+1}
            &=
                \left( 
                    I_n  + \Phi_k^\rmT W_k \Phi_k
                \right)^{-1}
                \left(
                \sum_{i=0}^k
                \lambda^{-i-1} R^{-1} \phi_i^\rmT y_i  
                % \nn \\ &\quad 
                + 
                \theta_0 \right)
                \nn \\
            &=
                \sum_{i=0}^k
                    \left( 
                        I_n  + \Phi_k^\rmT W_k \Phi_k
                    \right)^{-1}
                    \lambda^{-i-1} R^{-1} \phi_i^\rmT y_i 
                % \nn \\ &\quad 
                + 
                \left( 
                        I_n  + \Phi_k^\rmT W_k \Phi_k
                \right)^{-1}
                \theta_0 
                \nn \\       
            &\in
                \sum_{i=0}^k
                \SR([\Phi_k^\rmT \ \ R^{-1} \phi_i^\rmT ])
                +
                \SR([\Phi_k^\rmT \ \ \theta_0 ]) \nn \\
            &=
                \SR([\Phi_k^\rmT \ \ R^{-1} \Phi_k^\rmT  \ \ \theta_0 ])
            \nn \\
            &=
                \SR(\Phi_k^\rmT \Phi_k + R^{-1} \Phi_k^\rmT \Phi_k R^{-1} + \theta_0 \theta_0^\rmT)
                .\nn
        % \EndProofInEq
    \end{align}
\end{proof}

Table \ref{tab:RLS_expressions} summarizes various expressions for the RLS variables. 
\begin{table}[h!]
    \caption{Alternative expressions for the RLS variables.}
    \centering
    \begin{tabular}{|c|l|c|}
        \hline
        Variable &
        Expression & 
        Equation    \\
        \hline
        $P_k$  & 
        \tabitem
        $P_{k+1}
            =
                \dfrac{1}{\lambda}P_{k} -
                \dfrac{1}{\lambda}
                    P_{k} \phi_k ^\rmT
                    \left(
                        \lambda I_p + \phi_k  P_{k} \phi_k^\rmT
                    \right)^{-1}
                    \phi_k P_{k}$       
        &    
        \eqref{eq:PUpdate}
        \\
        
        &
        \tabitem $P_{k+1}^{-1}
        =
            \lambda P_{k}^{-1}
            + 
            \phi_{k}^\rmT \phi_{k}$
        &
        \eqref{eq:Pk_recursive}
        \\
        
        &
        \tabitem $P_{k+1}^{-1}
        =
            \lambda^{k+1} P_{0}^{-1}
            + 
            \sum_{i=0}^k \lambda^{k-i}
            \phi_{i}^\rmT \phi_{i}$
        &
        \eqref{eq:Pk_recursive}
        \\
        \hline
        $\theta_k$  &    
        \tabitem $\theta_{k+1}
            =
                \theta_{k} +
                P_{k+1} \phi_k^\rmT (y_k - \phi_k \theta_{k})$    
        &
        \eqref{eq:thetaUpdate}
        \\
         &
         \tabitem $\theta_{k+1}
            =
                \theta_{k} +
                P_{k} \phi_k^\rmT 
                (
                        \lambda I_p + \phi_k  P_{k} \phi_k^\rmT
                    )^{-1}
                (y_k - \phi_k \theta_{k})$
        &
        \eqref{eq:theta_update_WithInverse}
        \\
        &
        \tabitem
        $\theta_{k+1}
            =
                P_{k+1}
                \left(
                \sum_{i=0}^k \lambda^{k-i} \phi_i^\rmT y_i +
                \lambda^{k+1} P_0^{-1} \theta_0
                \right)$
        &
        \eqref{eq:Atheta_b}
        \\
        \hline
        $\tilde \theta_k$  &    
        \tabitem 
        $\tilde \theta_k = \theta_k - \theta$    
        &
        \eqref{eq:est_err}
        \\
        
        &
        \tabitem 
        $\tilde \theta_{k+1} = (I_n - P_{k+1} \phi_k^\rmT \phi_k) \tilde \theta_k $
        &
        \eqref{eq:theta_error1}
        \\
         
        &
        \tabitem 
        $\tilde \theta_{k+1} = \lambda P_{k+1} P_k^{-1} \tilde \theta_k $
        &
        \eqref{eq:theta_error2}
        \\
          
        &
        \tabitem 
        $\tilde \theta_k = \lambda^{k-l} P_{k} P_{l}^{-1}  \tilde \theta_{l} $
        &
        \eqref{eq:theta_error_explicit}
        \\
        \hline
    \end{tabular}
    % \vspace{1ex}
    % \centering
    % \caption{Various expressions for RLS variables.}
    \label{tab:RLS_expressions}
\end{table}

%\clearpage 

\section{Persistent Excitation and Forgetting}

This section defines persistent excitation of the regressor sequence and investigates the effect of persistent excitation and forgetting on $P_k$.
For all $j\ge0$ and $k\ge j,$ define    %
    \begin{align}
        F_{j,k}
                \isdef
                    \sum_{i=j}^{k}
                        \phi_i^\rmT  \phi_i.
    \end{align}

\begin{defin}
    \label{def:persistent_Exc}
    The sequence $(\phi_k)_{k=0}^\infty \subset \BBR^{p \times n}$ is \mbox{\it persistently exciting} if there exist $N \ge  n/p $ and $\alpha, \beta \in(0,\infty)$ such that, for all $j\ge0,$
    \begin{align}
        \alpha I_n
            \le
                F_{j,j+N}
            % \isdef
            %     \sum_{i=j}^{j+N}
            %         \phi_i^\rmT  \phi_i
            \le 
                    \beta I_n.
        \label{eq:persistent_def}
    \end{align}
\end{defin}

Suppose that $(\phi_k)_{k=0}^\infty$ is persistently exciting and \eqref{eq:persistent_def} is satisfied for given values of $N,\alpha,\beta.$
Then, with suitably modified values of $\alpha$ and $\beta,$ \eqref{eq:persistent_def} is satisfied for all larger values of $N$. 
For example, if $N$ is replaced by $2N,$ then \eqref{eq:persistent_def} is satisfied with $\alpha$ replaced by $2\alpha$ and $\beta$ replaced by $2\beta.$
The following result expresses \eqref{eq:Pk_recursive} in terms of $F_{0,k}$ in the case where $\lambda=1.$

\begin{lem} \label{lem:Pk_F0k}
    Let $\lambda = 1$ and, for all $k\ge1,$ define $P_k$ as in Theorem \ref{theorem_RLS}.
    Then,
    \begin{align}
        P_k^{-1} = F_{0,k} + P_0^{-1}.\label{PkFkP0}
    \end{align}
\end{lem}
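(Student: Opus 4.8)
The plan is to reduce the Riccati-type recursion \eqref{eq:PUpdate} to its additive inverse form and then sum. First I would verify that every $P_k$ generated by Theorem \ref{theorem_RLS} with $\lambda = 1$ is positive definite, and hence invertible: $P_0 = R^{-1}$ is positive definite, and in inverse coordinates each update adds the positive-semidefinite matrix $\phi_k^\rmT\phi_k$ to a positive-definite matrix, so positive definiteness propagates by induction. This confirms that the hypotheses of Proposition \ref{prop:Pk_MIL} are met, so that its equivalence is applicable rather than merely formal.

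Next I would apply Proposition \ref{prop:Pk_MIL} with $\lambda = 1$ to replace \eqref{eq:PUpdate} by the linear recursion $P_{k+1}^{-1} = P_k^{-1} + \phi_k^\rmT\phi_k$. Iterating this relation from the initial condition $P_0^{-1}$ telescopes the $P^{-1}$ terms and leaves an accumulated sum of the rank-updates $\phi_i^\rmT\phi_i$. Identifying this partial sum with the block $F_{0,\cdot}$ defined just above the lemma then yields the claimed closed form \eqref{PkFkP0}.

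I do not expect any real obstacle here: the statement is essentially a one-step specialization of Proposition \ref{prop:Pk_MIL} combined with a telescoping sum. The only point that needs genuine attention is the bookkeeping of the summation limits, so that the index range of the accumulated $\phi_i^\rmT\phi_i$ terms is matched correctly against the range appearing in the definition of $F_{j,k}$; everything else is immediate once the inverse recursion is in hand.
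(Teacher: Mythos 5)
Your approach is exactly the one the paper intends: Lemma \ref{lem:Pk_F0k} is stated without proof, immediately after the remark that it ``expresses \eqref{eq:Pk_recursive} in terms of $F_{0,k}$,'' so the intended argument is precisely Proposition \ref{prop:Pk_MIL} specialized to $\lambda=1$ followed by iteration, and your preliminary check that positive definiteness propagates (so that Proposition \ref{prop:Pk_MIL} is genuinely applicable rather than formal) is a careful touch the paper omits.

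However, the one step you defer---``the bookkeeping of the summation limits''---is not a formality; it is where the actual content lies, and carrying it out reveals a mismatch with the claim as printed. Iterating $P_{k+1}^{-1}=P_k^{-1}+\phi_k^\rmT\phi_k$ from the initial condition $P_0^{-1}$ gives
\begin{align}
    P_k^{-1}
        =
            P_0^{-1} + \sum_{i=0}^{k-1}\phi_i^\rmT\phi_i
        =
            P_0^{-1} + F_{0,k-1},
    \nonumber
\end{align}
since $F_{j,k}=\sum_{i=j}^{k}\phi_i^\rmT\phi_i$ includes the terminal index. Thus what the recursion yields is $P_k^{-1}=F_{0,k-1}+P_0^{-1}$, equivalently $P_{k+1}^{-1}=F_{0,k}+P_0^{-1}$, which is consistent with the entry $P_{k+1}^{-1}=\lambda^{k+1}P_0^{-1}+\sum_{i=0}^{k}\lambda^{k-i}\phi_i^\rmT\phi_i$ of Table \ref{tab:RLS_expressions} at $\lambda=1$, but is not \eqref{PkFkP0} as stated: that version would require $P_k^{-1}$ to already contain the term $\phi_k^\rmT\phi_k$, i.e., data that have not yet entered the update producing $P_k$. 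So your plan is the right one, but a complete write-up must either state the conclusion with the shifted index or flag \eqref{PkFkP0} as an off-by-one misprint; labeling the index matching as ``immediate'' leaves unresolved precisely the point at which the derived identity and the stated identity disagree.
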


The following result shows that, if $\SeqPhi$ is persistently exciting and $\lambda = 1$, then $P_k$ converges to zero.  
%
%A proof is given for completeness.

\begin{prop}
\label{prop:Pkinv_bounds_wo_forgetting}
Assume that $(\phi_k)_{k=0}^\infty \in \BBR^{p \times n}$ is persistently exciting,
let $N,\alpha,\beta$ be given by Definition \ref{def:persistent_Exc}, let $R\in\BBR^{n\times n}$ be positive definite, define $P_{0} \isdef R^{-1}$, let $\lambda =1$, and, for all $k\ge0,$ let $P_k$ be given by \eqref{eq:PUpdate}.
Then, for all $k\ge N+1,$ 
    \begin{align}
        \left \lfloor \tfrac{k}{N+1} \right \rfloor \alpha I_n + P_0^{-1} 
            \le
        P_k^{-1}
            \le 
        \left \lceil \tfrac{k}{N+1} \right \rceil \beta I_n + P_0^{-1} .
        \label{eq:Pkinv_bounds_wo_forgetting}
    \end{align}
Furthermore, 
    \begin{align}
        \lim_{k \to \infty} P_k = 0.
        \label{eq:Pk_limit_forgetting}
    \end{align}
\end{prop}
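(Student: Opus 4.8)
The plan is to reduce the matrix bounds to a block decomposition of $F_{0,k}$ and then read off the limit. First I would apply Lemma \ref{lem:Pk_F0k}: since $\lambda = 1$, it gives $P_k^{-1} = F_{0,k} + P_0^{-1}$, so subtracting $P_0^{-1}$ shows that \eqref{eq:Pkinv_bounds_wo_forgetting} is equivalent to the sandwich
\[
  \left\lfloor \tfrac{k}{N+1}\right\rfloor \alpha I_n
    \le F_{0,k} \le
  \left\lceil \tfrac{k}{N+1}\right\rceil \beta I_n .
\]
Thus the whole proof is really a statement about the partial sums $F_{0,k} = \sum_{i=0}^{k}\phi_i^\rmT\phi_i$, and the persistent-excitation hypothesis \eqref{eq:persistent_def} is exactly a per-window bound $\alpha I_n \le F_{j,j+N}\le\beta I_n$ on every window of $N+1$ consecutive summands.

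Second, I would carry out the window decomposition. For the lower bound, I would group the indices $0,\dots,k$ into consecutive disjoint windows of length $N+1$; there are at least $\lfloor k/(N+1)\rfloor$ complete such windows inside $[0,k]$, each contributing at least $\alpha I_n$ by \eqref{eq:persistent_def}, and the leftover summands $\phi_i^\rmT\phi_i$ are positive semidefinite and may be discarded, giving $F_{0,k}\ge \lfloor k/(N+1)\rfloor\alpha I_n$. For the upper bound, I would cover $\{0,\dots,k\}$ by $\lceil k/(N+1)\rceil$ windows of length $N+1$ (the last one possibly running past $k$); since each $\phi_i^\rmT\phi_i$ is positive semidefinite, the sum over a truncated window is dominated by the sum over the full length-$(N+1)$ window containing it, which is $\le \beta I_n$, so $F_{0,k}\le \lceil k/(N+1)\rceil\beta I_n$. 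Adding back $P_0^{-1}$ yields \eqref{eq:Pkinv_bounds_wo_forgetting}.

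Third, for the limit \eqref{eq:Pk_limit_forgetting} I would pass to smallest eigenvalues. The lower bound gives $\lambda_{\min}(P_k^{-1}) \ge \lfloor k/(N+1)\rfloor\alpha + \lambda_{\min}(P_0^{-1})$, which tends to $\infty$ as $k\to\infty$. Since $P_k$ is positive definite, $\|P_k\| = 1/\lambda_{\min}(P_k^{-1})\to 0$, whence $P_k\to0$.

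The fussiest step, and the only place real care is needed, will be the bookkeeping in the second paragraph: getting the counts of complete versus covering windows right so that the floor and ceiling in \eqref{eq:Pkinv_bounds_wo_forgetting} come out exactly (this is where the restriction $k\ge N+1$ and the integer-part functions enter), and justifying that a truncated window is bounded above by $\beta I_n$ via monotonicity of the sum in the positive-semidefinite order. Everything else is either a direct application of Lemma \ref{lem:Pk_F0k} or the elementary fact that $A\succeq B\succ0$ yields the matching eigenvalue bounds needed for the limit.
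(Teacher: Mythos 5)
Your proposal is correct and follows essentially the same route as the paper's own proof: Lemma \ref{lem:Pk_F0k} to write $P_k^{-1} = F_{0,k} + P_0^{-1}$, a split of $F_{0,k}$ into $\left\lfloor \tfrac{k}{N+1} \right\rfloor$ complete disjoint windows (discarding the positive-semidefinite remainder) for the lower bound, a covering by full windows for the upper bound, and divergence of the lower bound to get $P_k \to 0$. One shared caveat worth noting: the covering count you use is the same as the paper's and is off by one precisely when $N+1$ divides $k$, since the $k+1$ summands of $F_{0,k}$ then require $\left\lceil \tfrac{k+1}{N+1} \right\rceil = \tfrac{k}{N+1}+1$ windows rather than $\left\lceil \tfrac{k}{N+1} \right\rceil$ (for example, $\phi_i \equiv 1$, $p=n=1$, $k=N+1$ gives $F_{0,k}=N+2 > \beta = N+1$); this is a defect of the stated bound itself, inherited equally by your argument and the paper's, and is repaired by replacing the ceiling with $\left\lfloor \tfrac{k}{N+1} \right\rfloor + 1$, which changes nothing in the structure of either proof or in the conclusion \eqref{eq:Pk_limit_forgetting}.
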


% \left\lfloor
% \right\rfloor  

\begin{proof}
    First, note that, for all $k \ge 0$,
    \begin{align}
        F_{0,k}
            &=
                \sum_{i=1}^{\left \lfloor \tfrac{k}{N+1} \right \rfloor } F_{(i-1)(N+1), i(N+1)-1} 
                + 
                F_{\left \lfloor \tfrac{k}{N+1} \right \rfloor (N+1) ,k} \nn \\
            &\le
                \sum_{i=1}^{\left \lceil \tfrac{k}{N+1} \right \rceil } F_{(i-1)(N+1), i(N+1)-1} \nn ,
    \end{align}
    and thus  \eqref{eq:persistent_def} implies that 
    % 
    % \begin{align}
    %     \lfloor \tfrac{k}{N+1} \rfloor \alpha I_n
    %         &\le
    %     \sum_{i=1}^{\lfloor \tfrac{k}{N+1} \rfloor }  F_{(i-1)(N+1), i(N+1)-1} ,
    %     \label{eq:F_jN_ineq1}
    %     \\
    %     \sum_{i=1}^{\lceil \tfrac{k}{N+1} \rceil }  F_{(i-1)(N+1), i(N+1)-1} 
    %         &\le
    %     \lceil \tfrac{k}{N+1} \rceil \beta I_n.
    %     \label{eq:F_jN_ineq2}
    % \end{align}
    % 
    \begin{align}
        \left \lfloor \tfrac{k}{N+1} \right \rfloor \alpha I_n
            &\le
        \sum_{i=1}^{\left \lfloor \tfrac{k}{N+1} \right \rfloor }  F_{(i-1)(N+1), i(N+1)-1} 
        %\label{eq:F_jN_ineq1}
        \nn \\
            &\le
        \sum_{i=1}^{\left \lceil \tfrac{k}{N+1} \right \rceil }  F_{(i-1)(N+1), i(N+1)-1} \nn \\
            &\le
        \left \lceil \tfrac{k}{N+1} \right \rceil \beta I_n.
        \label{eq:F_jN_ineq3}
    \end{align}
    It follows from Lemma \ref{lem:Pk_F0k} %,  \eqref{eq:persistent_def},
    and \eqref{eq:F_jN_ineq3} that, for all $k\ge N+1,$
    \begin{align}
        \left \lfloor \tfrac{k}{N+1} \right \rfloor \alpha I_n + P_0^{-1}
            &\le
                F_{0,\left \lfloor \tfrac{k}{N+1} \right \rfloor(N+1)-1} + P_0^{-1}
            \nn \\
            &\le
                F_{0,k} + P_0^{-1}
            \nn \\
            &=
                P_k^{-1}  \nn \\
            &\le
                % \sum_{i = 0}^{[k/(N+1)]-1} F_{i(N+1),N} + 
                % \sum_{i = [k/(N+1)](N+1)}^k \phi_i^\rmT \phi_i + 
                F_{0,\left \lceil \tfrac{k}{N+1} \right \rceil (N+1)-1} + 
                P_0^{-1}
            \nn \\
            &\le
                \left \lceil \tfrac{k}{N+1}\right \rceil \beta  I_n + P_0^{-1}. \nn
            %%\EndProofInEq
    \end{align}
    Finally, it follows from \eqref{eq:Pkinv_bounds_wo_forgetting} that $\lim_{k \to \infty} P_k = 0$.
    %\EndProof
\end{proof}

The following example shows that $\lim_{k \to \infty} P_k = 0$ does not imply that $\SeqPhi$ is persistently exciting.

\begin{exam} \label{exam:CounterExample}
    % \textit{Counterexample to WHAT????.}
    \textit{$P_k$ converges to zero without persistent excitation.}
    For all $k\ge0$, let $\phi_k = \tfrac{1}{\sqrt{k+1}}$.
    Let $\lambda = 1$.
    For all $N\ge 1$, note that $F_{j,j+N} \le \tfrac{N+1}{j+1}$, and thus there does not exist $\alpha$ satisfying \eqref{eq:persistent_def}. 
    Hence, $\SeqPhi$ is not persistently exciting. 
    % 
    % It follows from Lemma xyz that, for all $k \ge 0$, 
    However, it follows from \eqref{eq:Pk_recursive} that, for all $k \ge 0$,
    \begin{align}
        P_k^{-1} 
            &=
                \sum_{i = 0}^{k} \dfrac{1}{i+1} + P_0^{-1}.
    \end{align}
    Thus, $\lim_{k \to \infty} P_k = 0$. 
    % However, there does not exist $N,\alpha, \beta$, such that \eqref{eq:persistent_def} is satisfied. 
    % Therefore, $\SeqPhi$ is not persistently exciting. 
    % 
    \EndExample
\end{exam}

The following result given in \cite{johnstone1982exponential} shows that, if $\SeqPhi$ is persistently exciting and $\lambda \in (0,1)$, then $P_k$ is bounded. 
%
%A proof is given for completeness.

\begin{prop}
\label{prop:Pkinv_bounds}
Assume that $(\phi_k)_{k=0}^\infty \in \BBR^{p \times n}$ is persistently exciting,
let $N,\alpha,\beta$ be given by Definition \ref{def:persistent_Exc}, let $R\in\BBR^{n\times n}$ be positive definite, define $P_{0} \isdef R^{-1}$, let $\lambda \in (0,1)$, and, for all $k\ge0,$ let $P_k$ be given by \eqref{eq:PUpdate}.
Then, for all $k\ge N+1,$ 
    \begin{align}
        \frac{\lambda^N(1-\lambda) \alpha} 
             {1 - \lambda^{N+1} }
             I_n
            \le
                P_k^{-1}
            \le 
                % \frac{\beta }{1 - \lambda^{N+1}} I_n  +xxx. \
            % \dfrac{(1-\lambda^{k}) \beta}{1-\lambda^{N+1}}  I_n
            %     +
            %     \lambda^{k-N}  
            %     P_{N}^{-1}.
            \dfrac{\beta}{1-\lambda^{N+1}}  I_n
            +
            P_{N}^{-1}.
        \label{eq:Pkinv_bounds}
    \end{align}
\end{prop}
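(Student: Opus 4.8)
The plan is to start from the recursive inverse update \eqref{eq:Pk_recursive} supplied by Proposition \ref{prop:Pk_MIL}, namely $P_{k+1}\inv = \lambda P_k\inv + \phi_k^\rmT\phi_k$, and to unroll it from the index $N$ rather than from $0$, so that the stated tail term $P_N\inv$ appears directly. Unrolling gives, for all $k \ge N+1$,
\begin{align}
    P_k\inv = \lambda^{k-N}P_N\inv + \sum_{i=N}^{k-1}\lambda^{k-1-i}\phi_i^\rmT\phi_i. \nonumber
\end{align}
Both inequalities then reduce to estimating the weighted sum $\sum_{i=N}^{k-1}\lambda^{k-1-i}\phi_i^\rmT\phi_i$ by partitioning the index range into consecutive windows of length $N+1$ and invoking the persistency inequality \eqref{eq:persistent_def} on each window.

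For the upper bound I would first note that $0 < \lambda^{k-N} \le 1$, so that $\lambda^{k-N}P_N\inv \le P_N\inv$, which produces the additive $P_N\inv$ term. I would then tile the index set $\{N, \ldots, k-1\}$ into consecutive blocks of length $N+1$ working downward from the top index $k-1$, writing the $\ell$-th block as having top index $t_\ell = k-1-\ell(N+1)$. Factoring the largest weight $\lambda^{\ell(N+1)}$ out of block $\ell$ and using $\lambda^{t_\ell - i} \le 1$ within the block together with the window upper bound $F_{t_\ell - N,\, t_\ell} \le \beta I_n$ from Definition \ref{def:persistent_Exc}, each block contributes at most $\lambda^{\ell(N+1)}\beta I_n$. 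Summing the geometric series $\sum_{\ell \ge 0}\lambda^{\ell(N+1)} = (1-\lambda^{N+1})^{-1}$ yields $\sum_{i=N}^{k-1}\lambda^{k-1-i}\phi_i^\rmT\phi_i \le \frac{\beta}{1-\lambda^{N+1}}I_n$, and combining with the tail gives the claimed upper bound. The only boundary issue is the bottommost, possibly incomplete, block; since a partial sum of consecutive $\phi_i^\rmT\phi_i$ is dominated by any full length-$(N+1)$ window containing it, its contribution is still bounded by $\beta I_n$ times the corresponding geometric weight, so the bound is unaffected.

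For the lower bound I would discard the positive-definite term $\lambda^k P_0\inv$ (equivalently, keep only the sum, now started from $0$) and retain a single window, the most recent one $\{k-1-N, \ldots, k-1\}$, which exists precisely because $k \ge N+1$. Over that window the smallest weight is $\lambda^N$, so that $P_k\inv \ge \lambda^N F_{k-1-N,\,k-1} \ge \lambda^N \alpha I_n$ by \eqref{eq:persistent_def}. Since $\frac{1-\lambda}{1-\lambda^{N+1}} = (1+\lambda+\cdots+\lambda^N)^{-1} \le 1$, this already dominates the stated constant, giving $P_k\inv \ge \frac{\lambda^N(1-\lambda)\alpha}{1-\lambda^{N+1}}I_n$. (Retaining all windows and summing the resulting geometric series sharpens the bound to $\frac{\lambda^N\alpha}{1-\lambda^{N+1}}I_n$; the conservative factor $(1-\lambda)$ in the stated form is exactly the value produced when the forgetting is written in the normalized convention $P_{k+1}\inv = \lambda P_k\inv + (1-\lambda)\phi_k^\rmT\phi_k$ used in \cite{johnstone1982exponential}, from which the bound is inherited.)

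The main obstacle is the upper-bound blocking argument: one must verify that tiling from the top index produces exactly the ratio $\lambda^{N+1}$ between successive factored weights --- this is where the window length $N+1$ and the geometric denominator $1-\lambda^{N+1}$ enter --- and one must handle the incomplete terminal block without inflating the constant. Everything else is routine manipulation in the Loewner order: factoring nonnegative scalar weights through positive-semidefinite inequalities, bounding $\lambda^{a}\le 1$ for $a \ge 0$, and applying \eqref{eq:persistent_def} window by window.
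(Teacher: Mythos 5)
Your proof is correct, but it is not the paper's proof; the two differ in how they extract the geometric structure from \eqref{eq:Pk_recursive}. You solve the recursion explicitly, anchored at index $N$, obtaining $P_k^{-1}=\lambda^{k-N}P_N^{-1}+\sum_{i=N}^{k-1}\lambda^{k-1-i}\phi_i^\rmT\phi_i$, and then estimate the weighted sum by tiling the index range into length-$(N+1)$ windows and applying \eqref{eq:persistent_def} window by window; the ratio $\lambda^{N+1}$ between consecutive windows produces the denominator $1-\lambda^{N+1}$, and your treatment of the partial terminal block (padding it to a full window using positive semidefiniteness of the discarded terms) is sound. The paper never unrolls the recursion. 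For the lower bound it converts the persistency window into a sum of inverses via $\phi_i^\rmT\phi_i\le P_{i+1}^{-1}$ and then uses $\lambda^{j}P_i^{-1}\le P_{i+j}^{-1}$ to dominate that sum by $(\lambda^{-N}+\cdots+1)P_k^{-1}$; for the upper bound it starts from $P_k^{-1}\le\frac{1-\lambda}{1-\lambda^{N+1}}\sum_{j=k}^{k+N}P_j^{-1}$ --- a window of \emph{future} values --- and iterates the recursion on that windowed sum backward to time $0$, invoking the upper bound of \eqref{eq:persistent_def} at each of the $k$ steps. Your route is more elementary and self-contained (no appeal to $P_j^{-1}$ for $j>k$), and it yields sharper intermediate constants: your lower bound is really $\lambda^{N}\alpha I_n$, and the factor $(1-\lambda)/(1-\lambda^{N+1})\le 1$ is given away only to match the stated form. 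What the paper's style buys is reusability: the same chain of inequalities on the $P_i^{-1}$, without explicit unrolling, is exactly what is re-deployed in Propositions \ref{prop:Pkinv_boundsconv} and \ref{prop:Pkinv_bounds_VDF}, where the scalar $\lambda$ is replaced by the matrix $\Lambda_k$ and a closed-form unrolling would be awkward. One small caveat: your parenthetical claim that retaining all windows sharpens the lower bound to $\frac{\lambda^N\alpha}{1-\lambda^{N+1}}I_n$ holds only in the limit $k\to\infty$; for finite $k$ you get a partial geometric sum. This does not affect your proof, which never uses that remark.
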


\begin{proof}
It follows from \eqref{eq:Pk_recursive} that, for all $i\ge0$,
$\lambda P_{i}^{-1} \le P_{i+1}^{-1}  $  
and 
$\phi_i^\rmT \phi_i \le P_{i+1}^{-1},$
and thus, for all $i,j\ge0,$  
$ \lambda^{j}P_{i}^{-1} \le P_{i+j}^{-1}.$  
Hence, for all $k \ge N+1$,
%
%merge strings
%
\begin{align}
    \alpha I_n
            &\le
                \sum_{i=k-N-1}^{k-1}
                    \phi_i^\rmT  \phi_i\nn\\
        &\le 
            \sum_{i=k-N}^{k} P_{i}^{-1}\nn\\
            &\le
            ( \lambda^{-N} + \cdots + 1 ) P_{k}^{-1}\nn\\
        &=
            \dfrac{1-\lambda^{N+1}}{\lambda^N (1-\lambda)} P_{k}^{-1}, \nn 
\end{align}
which proves the first inequality in \eqref{eq:Pkinv_bounds}.
To prove the second inequality in \eqref{eq:Pkinv_bounds}, note that, for all $k\ge N+1$,
\begin{align} 
    P_k^{-1}
        &\le 
            \dfrac{1-\lambda}{1-\lambda^{N+1}}
            \sum_{i=k-1}^{k+N-1} P_{i+1}^{-1}  \nn\\
        &\le 
            \dfrac{1-\lambda}{1-\lambda^{N+1}}
            \left(
                \lambda \sum_{i=k-1}^{k+N-1} P_{i}^{-1} + \beta I_n  
            \right)
            \nn\\
        &\le 
            \dfrac{1-\lambda}{1-\lambda^{N+1}}
            \left(
                \lambda^{k} \sum_{i=0}^{N} P_{i}^{-1} + \dfrac{1-\lambda^{k}}{1-\lambda} \beta I_n 
            \right)
            \nn\\
        % &=
        %     \dfrac{1-\lambda}{1-\lambda^{N+1}}
        %     \lambda^{k} \sum_{i=0}^{N} P_{i}^{-1} + 
        %     \dfrac{1-\lambda^{k}}{1-\lambda^{N+1}} \beta I_n
        %     \nn\\
            %
        &\le
            \lambda^{k-N} P_{N}^{-1} + 
            \dfrac{(1-\lambda^{k})\beta }{1-\lambda^{N+1}}  I_n.
            \nn \\
        &\le
            P_{N}^{-1} + 
            \dfrac{\beta }{1-\lambda^{N+1}}  I_n.
            \nn
            % %
            % &\le xxxxxxxxxx\nn\\
            % %
            % &\le \dfrac{(1-\lambda^{k+1}) \beta}{1-\lambda^{N+1}}  I_n
            %     + \lambda^{k+1-N}  
            %     P_{N}^{-1}.
        %\EndProofInEq
    \end{align}
\end{proof}

% \begin{proof}
%     See \cite{johnstone1982exponential}.  
%     \hfill \mbox{\large$\square$} 
% \end{proof}

% Note that the asymptotic upper bound in \eqref{eq:Pkinv_bounds} is $\dfrac{ \beta}{1-\lambda^{N+1}}  I_n$.
% 

The next result, which is an immediate consequence of \eqref{eq:Pk_recursive}, is a converse of Proposition 
\ref{prop:Pkinv_bounds}.

\begin{prop}
\label{prop:Pkinv_boundsconv}
Define $\phi_k,$ $y_k,$ $R,$ and $P_0$ as in Theorem \ref{theorem_RLS}, let $\lambda \in (0,1)$, and let $P_k$ be given by \eqref{eq:PUpdate}.
Furthermore, assume there exist $\overline{\alpha},\overline{\beta}\in(0,\infty)$ such that, for all $k\ge0,$
$\overline{\alpha} I_n \le P_k^{-1} \le \overline{\beta} I_n.$
Let $N \ge \tfrac{\lambda \overline{\beta} - \overline{\alpha} }{(1-\lambda) \overline \alpha}$.
Then, for all $j \ge 0$,
\begin{align}
    [(1 +  (1 - \lambda) N )\overline{\alpha} - \lambda \overline{\beta} ] I_n
        \le
            \sum_{i=j}^{j+N} \phi_i^\rmT \phi_i
        \le
            \dfrac{1 - \lambda^{N+1}}{ \lambda^{N}(1-\lambda)} \overline{\beta}I_n.
    \label{eq:Pkinv_bounds_converse}
\end{align}
Consequently, $(\phi_k)_{k=0}^\infty$ is persistently exciting.
\end{prop}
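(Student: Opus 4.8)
The plan is to extract from the recursion \eqref{eq:Pk_recursive} three elementary facts and then bound the sum $\sum_{i=j}^{j+N}\phi_i^\rmT\phi_i$ from both sides, reusing the machinery already developed for Proposition \ref{prop:Pkinv_bounds}. Rewriting \eqref{eq:Pk_recursive} as $\phi_i^\rmT\phi_i = P_{i+1}^{-1} - \lambda P_i^{-1}$ and using that both $\phi_i^\rmT\phi_i$ and $\lambda P_i^{-1}$ are positive semidefinite, I obtain, for all $i\ge0$, the relations $\lambda P_i^{-1} \le P_{i+1}^{-1}$ and $\phi_i^\rmT\phi_i \le P_{i+1}^{-1}$, and, by iterating the first of these, $\lambda^m P_i^{-1} \le P_{i+m}^{-1}$ for all $m\ge0$. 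These are exactly the inequalities used in the proof of Proposition \ref{prop:Pkinv_bounds}.

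For the lower bound I would avoid a termwise estimate, since $\overline{\alpha} - \lambda\overline{\beta}$ can be negative; instead I rearrange the (partially telescoping) sum. Using $\phi_i^\rmT\phi_i = P_{i+1}^{-1} - \lambda P_i^{-1}$ and separating the two endpoint terms gives
\begin{align}
\sum_{i=j}^{j+N}\phi_i^\rmT\phi_i
= P_{j+N+1}^{-1} - \lambda P_j^{-1} + (1-\lambda)\sum_{i=j+1}^{j+N}P_i^{-1}. \nn
\end{align}
I then substitute $P_{j+N+1}^{-1}\ge\overline{\alpha}I_n$, $-\lambda P_j^{-1}\ge-\lambda\overline{\beta}I_n$, and, since $1-\lambda>0$ and the inner sum has $N$ terms each bounded below by $\overline{\alpha}I_n$, $(1-\lambda)\sum_{i=j+1}^{j+N}P_i^{-1}\ge(1-\lambda)N\overline{\alpha}I_n$. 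Collecting terms yields the stated lower bound $[(1+(1-\lambda)N)\overline{\alpha}-\lambda\overline{\beta}]I_n$.

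For the upper bound I mimic the first chain of inequalities in the proof of Proposition \ref{prop:Pkinv_bounds}: bound each summand by $\phi_i^\rmT\phi_i \le P_{i+1}^{-1}$, reindex so that $\sum_{i=j}^{j+N}P_{i+1}^{-1}=\sum_{i=j+1}^{j+N+1}P_i^{-1}$, and push every term forward to index $j+N+1$ via $P_i^{-1}\le\lambda^{-(j+N+1-i)}P_{j+N+1}^{-1}\le\lambda^{-(j+N+1-i)}\overline{\beta}I_n$. As $i$ runs over $j+1,\dots,j+N+1$ the geometric factors are $\lambda^{0},\lambda^{-1},\dots,\lambda^{-N}$, whose sum equals $\tfrac{1-\lambda^{N+1}}{\lambda^N(1-\lambda)}$, giving precisely the stated upper bound $\tfrac{1-\lambda^{N+1}}{\lambda^N(1-\lambda)}\overline{\beta}I_n$.

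Finally, to conclude persistent excitation I identify $\beta$ with the upper-bound coefficient and $\alpha$ with $(1+(1-\lambda)N)\overline{\alpha}-\lambda\overline{\beta}$, so that \eqref{eq:persistent_def} holds with the $N$ at hand. The one point needing care is strictness: the hypothesis $N\ge\tfrac{\lambda\overline{\beta}-\overline{\alpha}}{(1-\lambda)\overline{\alpha}}$ only guarantees $\alpha\ge0$, whereas Definition \ref{def:persistent_Exc} requires $\alpha\in(0,\infty)$, so in the boundary case I would take $N$ strictly larger (which is legitimate by the remark that persistency for one $N$ persists, with adjusted constants, for all larger $N$, and also lets me enforce $N\ge n/p$). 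I expect the genuinely non-mechanical step to be the lower bound: the rearrangement isolating the boundary terms $P_{j+N+1}^{-1}$ and $\lambda P_j^{-1}$ and leaving the positive factor $(1-\lambda)$ multiplying the interior sum is what makes the estimate come out positive, whereas a naive termwise bound fails.
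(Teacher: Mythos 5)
Your proof is correct and follows essentially the same route as the paper's: the identical telescoping decomposition $\sum_{i=j}^{j+N}\phi_i^\rmT\phi_i = P_{j+N+1}^{-1} - \lambda P_j^{-1} + (1-\lambda)\sum_{i=j+1}^{j+N}P_i^{-1}$ for the lower bound, the same chain $\phi_i^\rmT\phi_i \le P_{i+1}^{-1} \le \lambda^{-(j+N+1-i)}P_{j+N+1}^{-1}$ with the geometric sum $\tfrac{1-\lambda^{N+1}}{\lambda^N(1-\lambda)}$ for the upper bound, and the same identification of $\alpha$ and $\beta$ in Definition \ref{def:persistent_Exc}. If anything, your treatment is slightly more careful than the paper's, since you explicitly handle the boundary case where $(1+(1-\lambda)N)\overline{\alpha}-\lambda\overline{\beta}=0$ (the definition requires $\alpha>0$) and the requirement $N\ge n/p$, neither of which the paper addresses.
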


\begin{proof}
    Note that, for all $j \ge 0$,
    \begin{align}
        [(1 +  (1 - \lambda) N )\overline{\alpha} - \lambda \overline{\beta} ] I_n
            &=
                \overline \alpha I_n + 
                (1 - \lambda) N \overline{\alpha} I_n -
                \overline{ \beta} I_n
            \nn \\
            &\le
                 P_{j+N+1}^{-1} +
                (1 - \lambda) \sum_{i=j+1}^{j+N} P_{i}^{-1} -
                \lambda P_j^{-1}
            \nn \\
            &=
                \sum_{i=j}^{j+N} ( P_{i+1}^{-1} - \lambda P_{i}^{-1})
            \nn \\
            &=
                \sum_{i=j}^{j+N} \phi_i^\rmT \phi_i,
            \nn
    \end{align}
    which proves the first inequality in \eqref{eq:Pkinv_bounds_converse}.
    To prove the second inequality in \eqref{eq:Pkinv_bounds_converse}, note that  \eqref{eq:Pk_recursive} implies that, for all $i\ge0$, $\lambda P_{i}^{-1} \le P_{i+1}^{-1}  $ and  $\phi_i^\rmT \phi_i \le P_{i+1}^{-1},$
    and thus, for all $i,j\ge0,$  
    $ \lambda^{j}P_{i}^{-1} \le P_{i+j}^{-1}.$  
    Hence, for all $j \ge 0$,
    \begin{align}
        \sum_{i=j}^{j+N} \phi_i^\rmT \phi_i
                &\le
                    \sum_{i=j}^{j+N}
                        P_{i+1}^{-1}
                \nn \\
                &\le
                    (\lambda^{-N} +  \cdots + 1) P_{j+N+1}^{-1}
                \nn \\
                &\le
                    \dfrac{1 - \lambda^{N+1}}{ \lambda^{N}(1-\lambda)} \overline{\beta}I_n.
                \nn
    \end{align}
    Finally, it follows from Definition \ref{def:persistent_Exc} with
    $N \ge \tfrac{\lambda \overline{\beta} - \overline{\alpha} }{(1-\lambda) \overline \alpha},$
    $\alpha = (1 +  (1 - \lambda) N )\overline{\alpha} - \lambda \overline{\beta},$ and
    $\beta = \tfrac{1 - \lambda^{N+1}}{ \lambda^{N}(1-\lambda)} \overline{\beta},$ 
    that $\SeqPhi$ is persistently exciting. 
\end{proof}

The proof of Proposition \ref{prop:Pkinv_boundsconv} shows that the condition $N \ge \tfrac{\lambda \overline{\beta} - \overline{\alpha} }{(1-\lambda) \overline \alpha}$ is needed to satisfy the lower bound in Definition \ref{def:persistent_Exc}. However, the upper bound in Definition \ref{def:persistent_Exc} is satisfied for all $N \ge 1$.

\begin{exam}
\label{exam:PE_Pk_bounds_1}
    % \textit{Regressor bounds for persistently exciting signals}.
    \textit{Persistent excitation and bounds on $P_k^{-1}$}.
    Let $\phi_k=[u_k\ \ u_{k-1}],$
    where  $u_k$ is the periodic signal
         \begin{align}
        u_k
            =
                \sin \frac{2 \pi k}{17}+
                \sin \frac{2 \pi k}{23}+
                \sin \frac{2 \pi k}{53}.
        \label{eq:u_harm}
    \end{align}
    Figure \ref{fig:CSM_forgetting_Prop1_limits} shows the singular values of $F_{j,j+N}$ for $N=2$ and $N=10$, as well as the singular values of $P_k^{-1}$ with the corresponding upper and lower bounds given by \eqref{eq:Pkinv_bounds} for $N=2$ and $N=10$. 
    \EndExample
\end{exam}

\begin{figure}[h!]
    \centering
    \includegraphics[width=1 \textwidth]{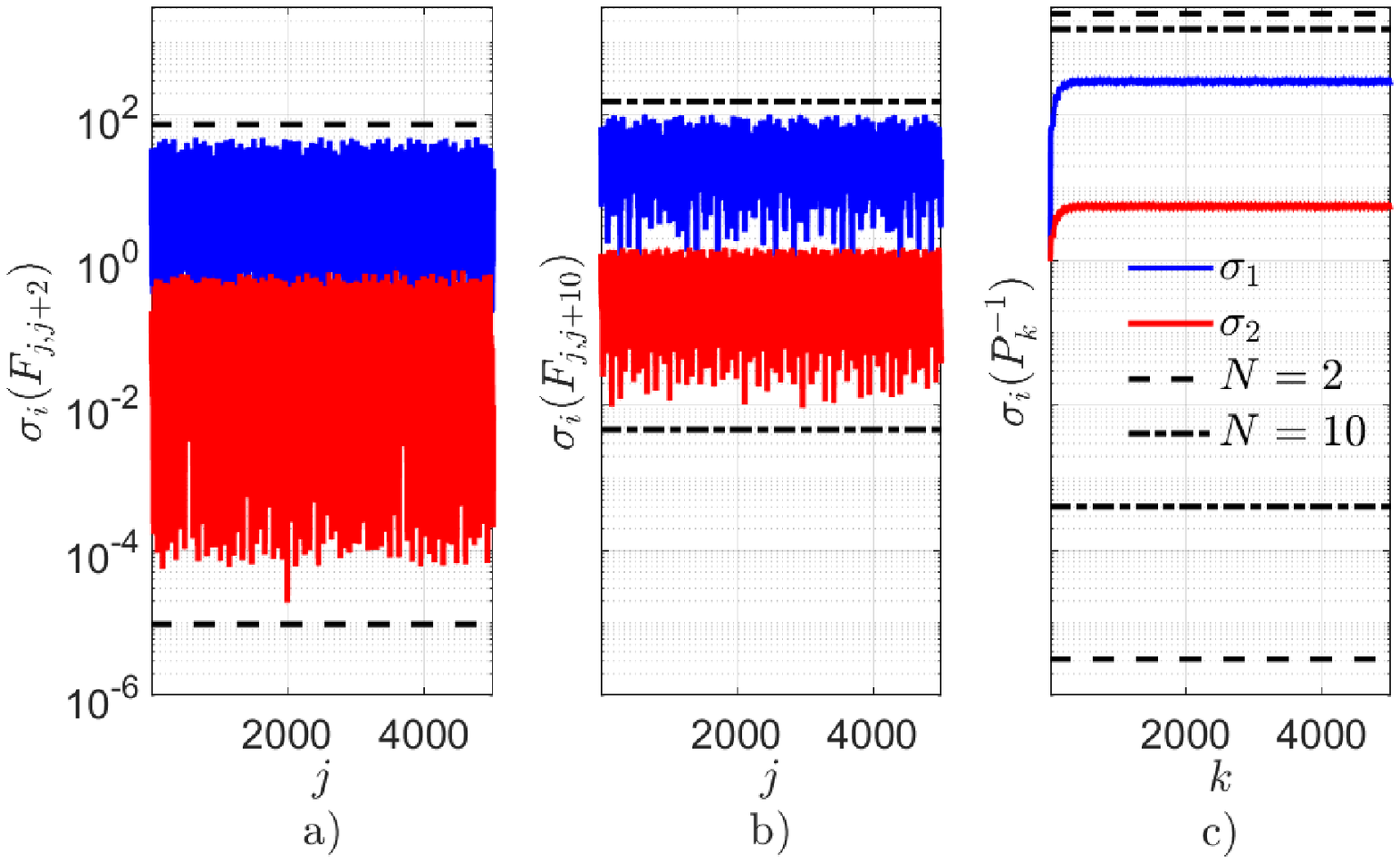}
    \caption
        {
            Example \ref{exam:PE_Pk_bounds_1}. 
            Persistent excitation and bounds on $P_k^{-1}$.   
            a) and b) show the singular values of $F_{j,j+N}$ for $N=2$ and $N=10$, where $\alpha$ and $\beta$ are chosen to satisfy \eqref{eq:persistent_def}.
            Since $u_k$ is periodic, it follows that, for all $j\ge0,$ the lower and upper bounds  \eqref{eq:persistent_def} for $F_{j,j+N}$ are satisfied.   Hence, $\SeqPhi$  is persistently exciting.
            c) shows the singular values of $P_k^{-1}$, with corresponding bounds given by \eqref{eq:Pkinv_bounds} for $\lambda = 0.99$.
            Note that $\alpha$ and $\beta$ are larger for $N=10$ than for $N=2$, as expected.
        }
    \label{fig:CSM_forgetting_Prop1_limits}
\end{figure}

\begin{exam}
\label{exam:PE_Pk_bounds_2}
    % \textit{Regressor bounds for signals lacking persistency of excitation}.
    \textit{Lack of persistent excitation and bounds on $P_k^{-1}$}.
    Let $\phi_k=[u_k\ \ u_{k-1}],$
    where  $u_k$ is given by \eqref{eq:u_harm} for all $k<2500$ and $u_k = 1$ for all $k \ge 2500$.
    Figure \ref{fig:CSM_forgetting_Prop1_limits_NotPersistent} shows the singular values of $F_{j,j+2}$ and 
    the singular values of $P_k^{-1}$ for $\lambda = 1$ and $\lambda = 0.9$, respectively.
    %
    %
    % %
    Note that, for $\lambda = 1$, one of the singular values of $P_k^{-1}$ diverges, whereas, for $\lambda \in (0,1)$, one of singular values of $P_k^{-1}$ converges to zero. 
    % 
    % Note that the singular values of $P_k^{-1}$ decrease more rapidly for the smaller value of $\lambda$. 
    % 
    \EndExample
\end{exam}

\begin{figure}[h!]
    \centering
    \includegraphics[width=1 \textwidth]{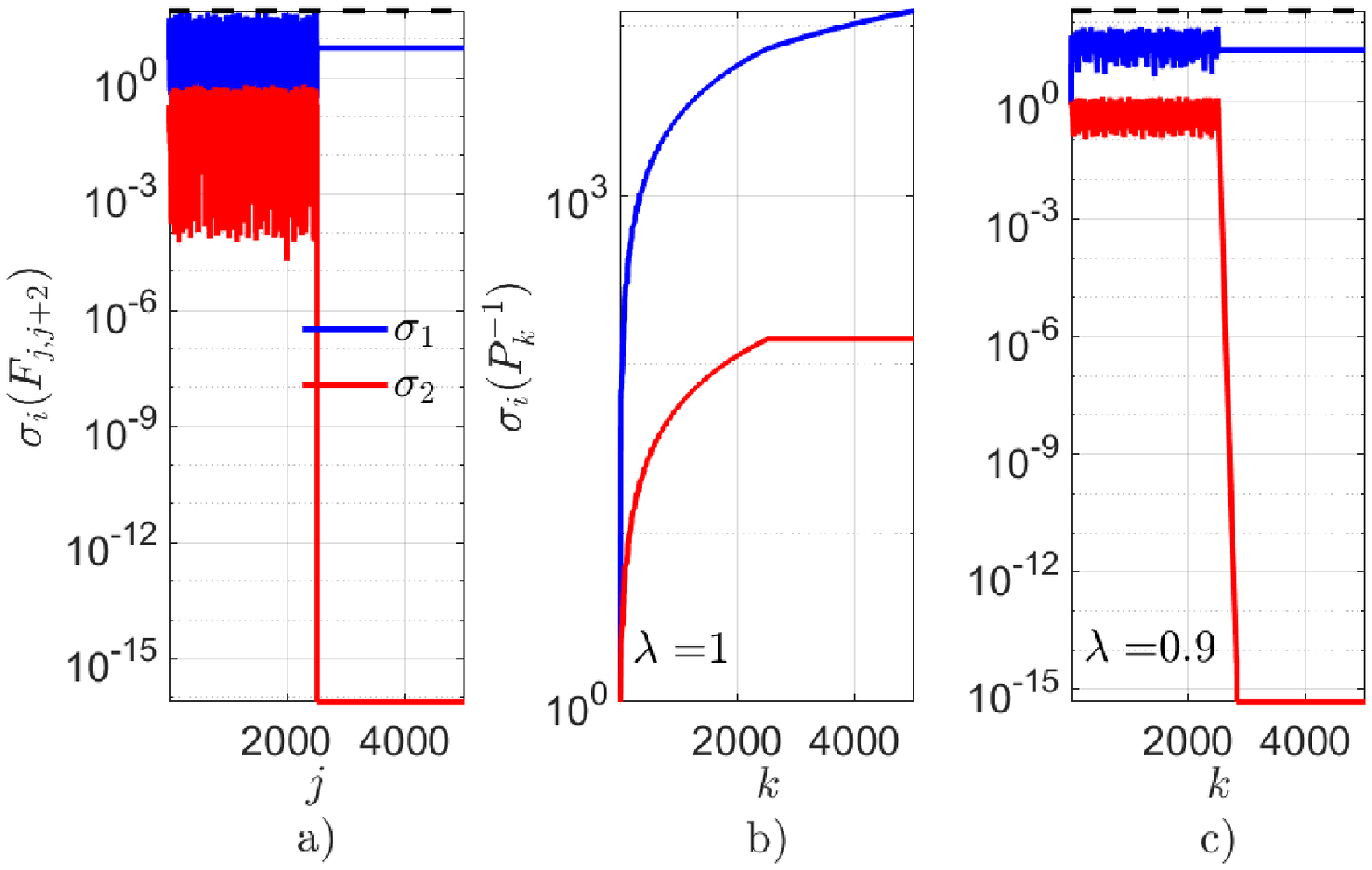}
    \caption
        {
            Example \ref{exam:PE_Pk_bounds_2}. 
            Lack of persistent excitation and bounds on $P_k^{-1}$. 
            a) shows the singular values of $F_{j,j+2}$.
            Note that the smaller singular value of $F_{j,j+2}$  reaches zero in machine precision, and thus that $\alpha >0$ satisfying \eqref{eq:persistent_def} does not exist.
            Hence, $\phi_k$ is not persistently exciting. 
            The upper bound $\beta$ shown by the dashed line is chosen to satisfy \eqref{eq:persistent_def}.
            b) and c) show the  singular values of $P_k^{-1}$ for $\lambda = 1$ and $\lambda = 0.9$, respectively.
            Note that, if $\lambda = 1$, then one of the singular values of $P_k^{-1}$ diverges, whereas, if $\lambda \in (0,1)$, then one of singular values of $P_k^{-1}$ converges to zero. 
            % For both values of $\lambda,$ the upper bound of $P_k^{-1}$ is given by \eqref{eq:Pkinv_bounds}; however, in each case, $P_k^{-1}$ cannot be lower bounded.
        }
    \label{fig:CSM_forgetting_Prop1_limits_NotPersistent}
\end{figure}

The following result shows that the \textit{predicted error} $z_k \isdef \phi_k \theta_k - y_k$ converges to zero whether or not $\SeqPhi$ is persistent.

\begin{prop}
    \label{prop:z_converges}
    For all $k\ge0$, let $\phi_k \in \BBR^{p \times n}$ and $y_k \in \BBR^p$, let $R\in\BBR^{n\times n}$ be positive definite, and let $P_0 = R^{-1}$, $\theta_0 \in \BBR^n$, and  $\lambda \in (0,1]$.
    Furthermore, for all $k\ge0,$ let $P_k$ and $\theta_k$ be given by \eqref{eq:PUpdate} and \eqref{eq:thetaUpdate}, respectively, and 
    define the \textit{predicted error} $z_k \isdef \phi_k \theta_k - y_k.$
    Then, 
    \begin{align}
        \lim_{k \to \infty} z_k = 0.
        \label{eq:z_lim}
    \end{align}
    % and
    % \begin{align}
    %     \lim_{k \to \infty} ( \theta_{k+1} -  \theta_k )= 0.
    %     \label{eq:tilde_theta_lim}
    % \end{align}
\end{prop}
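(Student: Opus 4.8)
The plan is an energy (Lyapunov) argument on the parameter error. Since the model \eqref{eq:process} gives $y_k = \phi_k\theta$, the predicted error factors through the parameter error \eqref{eq:est_err} as $z_k = \phi_k\theta_k - y_k = \phi_k(\theta_k-\theta) = \phi_k\tilde\theta_k$, so it suffices to show that $\phi_k\tilde\theta_k\to0$. I take as candidate energy $V_k\isdef\tilde\theta_k^\rmT P_k^{-1}\tilde\theta_k$, which is well defined and nonnegative because each $P_k$ is positive definite.

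Next I would compute the one-step change in $V_k$. Writing the estimate update \eqref{eq:thetaUpdate} as $\tilde\theta_{k+1} = \tilde\theta_k - P_{k+1}\phi_k^\rmT z_k$, using the inverse recursion $P_{k+1}^{-1} = \lambda P_k^{-1} + \phi_k^\rmT\phi_k$ from Proposition \ref{prop:Pk_MIL}, and using the identity $\phi_k P_{k+1}\phi_k^\rmT = \phi_k P_k\phi_k^\rmT(\lambda I_p + \phi_k P_k\phi_k^\rmT)^{-1}$ obtained by pre- and post-multiplying \eqref{eq:P_update_WithInverse} by $\phi_k$ and $\phi_k^\rmT$, a direct calculation yields
\[
V_{k+1} = \lambda V_k - \lambda\, z_k^\rmT(\lambda I_p + \phi_k P_k\phi_k^\rmT)^{-1}z_k .
\]
Since $\lambda\in(0,1]$ and the quadratic correction is nonnegative, $(V_k)_{k=0}^\infty$ is nonincreasing and bounded below by $0$, hence convergent. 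Therefore $V_k - V_{k+1}\to0$, and because $V_k - V_{k+1} = (1-\lambda)V_k + \lambda\, z_k^\rmT(\lambda I_p + \phi_k P_k\phi_k^\rmT)^{-1}z_k$ is a sum of two nonnegative terms, each term tends to zero and is summable; in particular $z_k^\rmT(\lambda I_p + \phi_k P_k\phi_k^\rmT)^{-1}z_k\to0$.

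It remains to pass from this weighted error to $\|z_k\|\to0$. Since $z_k^\rmT(\lambda I_p + \phi_k P_k\phi_k^\rmT)^{-1}z_k \ge \|z_k\|^2/(\lambda + \|\phi_k P_k\phi_k^\rmT\|)$, it suffices to bound $\phi_k P_k\phi_k^\rmT$ uniformly in $k$, and I expect this to be the crux of the argument. A uniform bound holds whenever the regressors are bounded: under persistent excitation the lower bounds on $P_k^{-1}$ in Propositions \ref{prop:Pkinv_bounds_wo_forgetting} and \ref{prop:Pkinv_bounds} bound $P_k$ while the upper bound in Definition \ref{def:persistent_Exc} bounds $\|\phi_k\|$, and for $\lambda=1$ the relation $P_k\le P_0$ from Lemma \ref{lem:Pk_F0k} gives $\phi_k P_k\phi_k^\rmT\le\|P_0\|\,\|\phi_k\|^2$. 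The genuinely delicate case is $\lambda\in(0,1)$ without persistency, where $P_k$ can grow without bound along the unexcited directions, so one must argue that $\phi_k$ loads negligibly onto exactly those directions. As a robust partial result that needs no boundedness hypothesis whatsoever, the a posteriori error obeys $\|\phi_k\tilde\theta_{k+1}\|^2 = \lambda^2\,z_k^\rmT(\lambda I_p+\phi_k P_k\phi_k^\rmT)^{-2}z_k \le \lambda\, z_k^\rmT(\lambda I_p+\phi_k P_k\phi_k^\rmT)^{-1}z_k\to0$, which already follows from the energy decrement above and bounds $V_k - V_{k+1}$ term by term.
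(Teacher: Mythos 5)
Your derivation follows the paper's proof almost line for line: the paper uses the same Lyapunov function $V_k=\tilde\theta_k^\rmT P_k^{-1}\tilde\theta_k$ and obtains $V_{k+1}-V_k=-[(1-\lambda)V_k+z_k^\rmT R_kz_k]$ with $R_k\isdef I_p-\phi_kP_k\phi_k^\rmT(\lambda I_p+\phi_kP_k\phi_k^\rmT)^{-1}$, which is algebraically identical to your identity because $R_k=\lambda(\lambda I_p+\phi_kP_k\phi_k^\rmT)^{-1}$. The difference is the final step. The paper invokes Lemma \ref{prop:IminusAlambda} to conclude that each $R_k$ is positive definite and then asserts that $z_k^\rmT R_kz_k\to0$ implies $z_k\to0$. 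That inference is a non sequitur: pointwise positive definiteness is not enough; one needs $\inf_{k}\sigma_{\rm min}(R_k)>0$, and since $\sigma_{\rm min}(R_k)=\lambda/(\lambda+\|\phi_kP_k\phi_k^\rmT\|)$, this is exactly the uniform bound on $\phi_kP_k\phi_k^\rmT$ that you isolated as the crux. So the step you honestly declined to claim is precisely the step the paper glosses over, and your partial conclusions---the normalized error $z_k^\rmT(\lambda I_p+\phi_kP_k\phi_k^\rmT)^{-1}z_k\to0$, the a posteriori error $\phi_k\tilde\theta_{k+1}\to0$, and $z_k\to0$ whenever $\phi_kP_k\phi_k^\rmT$ is uniformly bounded---are what the energy argument actually delivers.

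Moreover, the missing hypothesis cannot be dispensed with: as stated, with no boundedness assumption on the regressors, the proposition is false. Take $n=p=1$, $\theta=0$, $y_k\equiv0$, $\theta_0=1$, $R=P_0=1$, any $\lambda\in(0,1]$, and define the regressors recursively by $\phi_k\isdef\lambda^{-k}P_k^{-1}$, where $P_{k+1}^{-1}=\lambda P_k^{-1}+\phi_k^2$ as in \eqref{eq:Pk_recursive}. Every hypothesis of the proposition holds, and \eqref{eq:theta_error_explicit} gives $\tilde\theta_k=\lambda^kP_kP_0^{-1}\tilde\theta_0$, hence
\[
z_k=\phi_k\tilde\theta_k=\lambda^{-k}P_k^{-1}\cdot\lambda^kP_k=1\qquad\text{for all }k\ge0,
\]
so $z_k\not\to0$; consistently with your analysis, $\phi_k^2P_k\to\infty$ here, and only the normalized error $z_k^2/(\lambda+\phi_k^2P_k)$ converges to zero. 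Your inability to close the general case is therefore not a deficiency of your attempt but a defect of the statement and of the paper's proof: the correct proposition requires an additional assumption guaranteeing $\sup_k\|\phi_kP_k\phi_k^\rmT\|<\infty$ (for example, bounded regressors with persistent excitation, or $\lambda=1$ with bounded regressors), and under any such assumption your argument completes the proof exactly as you outlined.
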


\begin{proof}  
    For all $k\ge0$, note that $z_k = \phi_k \tilde \theta_k,$ and define $V_k \isdef \tilde \theta_k^\rmT P_k^{-1} \tilde \theta_k$.
    Note that, for all $k\ge 0$ and $\tilde \theta_k \in \BBR^n$, $V_k \ge 0$.
    Furthermore, for all $k\ge0,$
    \begin{align}
        V_{k+1} - V_k
            &=
                \tilde \theta_{k+1}^\rmT P_{k+1}^{-1} \tilde \theta_{k+1} -
                \tilde \theta_k^\rmT P_k^{-1} \tilde \theta_k 
            \nn \\
            &=
                \lambda^2   \tilde \theta_k^\rmT P_k^{-1} P_{k+1} P_k^{-1} \tilde \theta_{k} -
                \tilde \theta_k^\rmT P_k^{-1} \tilde \theta_k 
            \nn \\
            &=
                (
                \lambda \tilde \theta_{k+1}^\rmT -
                \tilde \theta_k^\rmT
                )
                P_k^{-1} \tilde \theta_k 
            \nn \\
            &=
                -[(1-\lambda ) \tilde \theta_k^\rmT + 
                \lambda \tilde \theta_k^\rmT \phi_k^\rmT \phi_k P_{k+1} 
                ]
                P_k^{-1} \tilde \theta_k 
            \nn \\
            &=
                -[(1-\lambda ) \tilde \theta_k^\rmT
                P_k^{-1} \tilde \theta_k +
                \lambda \tilde \theta_k^\rmT \phi_k^\rmT \phi_k P_{k+1} 
                P_k^{-1} \tilde \theta_k 
                ]
            \nn \\
            &=
                -[
                (1-\lambda ) \tilde \theta_k^\rmT
                P_k^{-1} \tilde \theta_k +
                \tilde \theta_k^\rmT \phi_k^\rmT 
                [
                    I_p- \phi_k P_{k} \phi_k^\rmT
                    ( \lambda I_p + \phi_k  P_{k} \phi_k^\rmT )^{-1}
                ]
                \phi_k
                \tilde \theta_k ]
            \nn \\
            &=
                -
                [
                (1-\lambda ) V_k +
                z_k^\rmT 
                [
                    I_p- \phi_k P_{k} \phi_k^\rmT
                    ( \lambda I_p + \phi_k  P_{k} \phi_k^\rmT )^{-1}
                ]
                z_k 
                ]
            \nn \\
            &\le 
                0.
            \nn
    \end{align}
    Note that, since $(V_k)_{k=1}^\infty$ is a nonnegative, nonincreasing sequence, it converges to a nonnegative number.
    Hence, $\lim_{k\to\infty} (V_{k+1} - V_k)=0,$ which implies that $\lim_{k\to\infty} [ (1-\lambda ) V_k + z_k^\rmT  R_k z_k  ] = 0,$
    where $R_k \isdef I_p- \phi_k P_{k} \phi_k^\rmT ( \lambda I_p + \phi_k  P_{k} \phi_k^\rmT )^{-1} $.
    Lemma \ref{prop:IminusAlambda} from ``Three Useful Lemmas'' implies that $R_k$ is positive definite.
    Since $V_k \ge 0$, it follows that $\lim_{k \to \infty} z_k = 0.$
    % 
    % Finally, \eqref{eq:tilde_theta_lim} follows from \eqref{eq:thetaUpdate} and \eqref{eq:z_lim}.
    %\EndProof
\end{proof}

The following example shows that $\theta_k$ may converge despite the fact that $\SeqPhi$ is not persistent. 
% 
% Okay, I am almost done with the sidebars. 
% 
\begin{exam}
    \label{exam:LoP_zto0_theta_conv}
    \textit{Convergence of $z_k$ and $\theta_k$.}
    Consider the first-order system
    \begin{align}
        y_k
            =
                \dfrac
                    {0.8}
                    {\shiftq - 0.4}
                u_k,
        \label{eq:1G}
    \end{align}
    where $\shiftq$ is the forward-shift operator.
    % Let $\theta$ consist of the coefficients in \eqref{eq:1G} and 
    Define
    $
        \phi_{k}
            \isdef
                [y_{k-1} \ u_{k-1} ],%\label{eq:Reg_1G}
    $
    so that $y_k = \phi_k \theta$, where $\theta$ consists of the coefficients in \eqref{eq:1G}.
    % Note that the regressor \eqref{eq:1G} corresponds to a first-order IIR parameterization of a linear input-output model. 
    % 
    To apply RLS, let $P_0 = I_{2}$, $\theta_0 = 0$, and $\lambda = 0.999$.
    Figure \ref{fig:CSM_forgetting_LoP_zto0_theta_conv} shows the shows the singular values of $F_{j,j+10}$,
    the predicted error $z_k$, and
    the parameter estimate $\theta_k$ for two choices of the input $u_k$.
    In the first case, for all $k\ge 0$, $u_k=1$, whereas 
    in the second case, for all $k\ge 0$, $u_k=1$.
    For both choices of $u_k$, the predicted error $z_k$ converges to zero, which confirms Proposition \ref{prop:z_converges}, and $\theta_k$ converges. 
    % Note that $\theta_k$ converges to different parameter values.
    Note that, in these two cases, $\theta_k$ converges to different parameter values, neither of which is the true value.
    \EndExample
\end{exam}

\begin{figure}[h!]
    \centering
    \includegraphics[width=1 \textwidth]{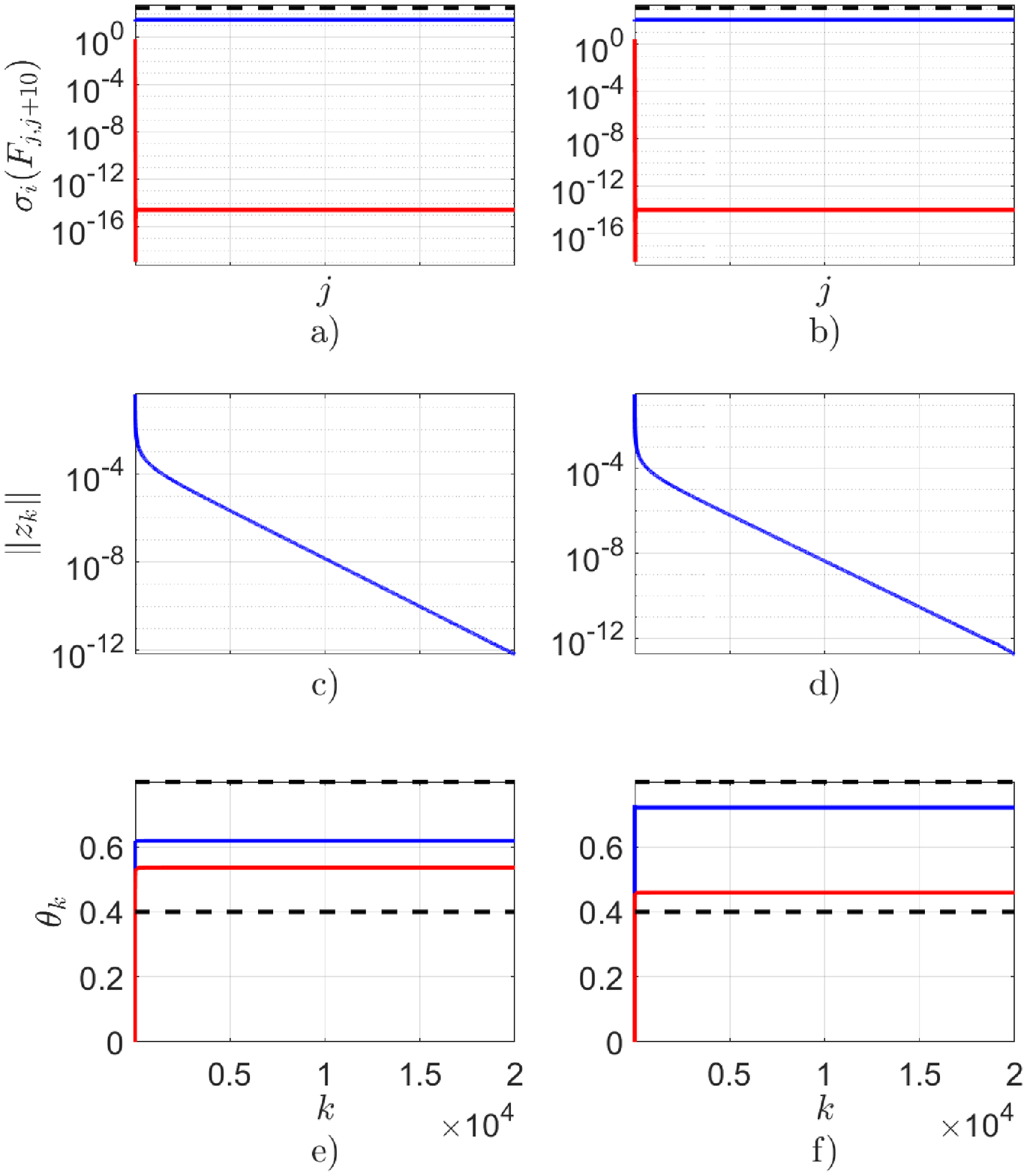}
    \caption
        {
            Example \ref{exam:LoP_zto0_theta_conv}. 
            Convergence of $z_k$ and $\theta_k$. 
            a) and b) show the singular values of $F_{j,j+10}$ for two choices of $u_k$.
            Note that the singular value of $F_{j,j+10}$ that is close to machine precision $(\approx 10^{-15})$ is essentially zero.  
            Definition \ref{def:persistent_Exc} thus implies that $\SeqPhi$ is not persistently exciting. 
            c) and d) show the predicted error $z_k$ for both cases.
            Note that $z_k$ converges to zero in both cases. 
            Finally, e) and f) show the parameter estimate $\theta_k$ for both cases.
            Note that, for both choices of input $u_k$, $\theta_k$ converge, but to different parameter values. 
        }
    \label{fig:CSM_forgetting_LoP_zto0_theta_conv}
\end{figure}

Table \ref{tab:P_k_behavior} summarizes the results in this section.

\begin{table}[ht]
    \caption{Behavior of $P_k$ with and without persistent excitation.}
    \centering
    \begin{tabular}{|l|p{5cm}|p{5cm}|}
        \hline
        Excitation $\backslash$ $\lambda $  &  $\lambda = 1$ & $\lambda \in (0,1)$ \\
        \hline
            Persistent   &  
            \tabitem $P_k$ converges to zero &
            \tabitem $P_k$ is bounded  \\
            
                        &
            \tabitem Proposition \ref{prop:Pkinv_bounds_wo_forgetting} &
            \tabitem Propositions \ref{prop:Pkinv_bounds}, \ref{prop:Pkinv_boundsconv}  \\
                        &
            \tabitem Example \ref{exam:PE_Pk_bounds_1} &
            \tabitem Example \ref{exam:PE_Pk_bounds_1}  \\
        \hline
            Not Persistent & 
            \tabitem All singular values of $P_k$ are bounded & 
            \tabitem Some singular values of $P_k$ diverge \\
                        &
            \tabitem Some of these  converge to zero &
            \tabitem The remaining singular values are bounded  \\    
                        &
            \tabitem Example \ref{exam:PE_Pk_bounds_2} &
            \tabitem Example \ref{exam:PE_Pk_bounds_2}  \\
        \hline
    \end{tabular}
    % \vspace{1ex}
    % \centering
    % \caption{Behavior of $P_k$ with and without persistent excitation.}
    \label{tab:P_k_behavior}
\end{table}

%\clearpage
\section{Persistent Excitation and the Condition Number}

For nonsingular $A\in\BBR^{n\times n},$  the condition number of $A$ is defined by
\begin{align}
    \kappa (A) 
        \isdef 
            \dfrac{\sigma_{\rm max}(A)}{ \sigma_{\rm min}(A)},
            % \|A \| \| A^{-1} \|,
\end{align}
% where $\| A \|$ denotes the maximum singular value of $A$.
%
For $B \in \BBR^{n \times m},$ let $\|B\|$ denotes the maximum singular value of $B$.
% Note that for nonsingular $A\in\BBR^{n\times n},$ $\kappa(A) =  \|A \| \| A^{-1} \|$.
If $A$ is positive definite, then 
\begin{align}
    \| A^{-1}\|^{-1} I_n 
        =
    \sigma_{\rm min}(A) I_n 
        \le 
            A
        \le
    \sigma_{\rm max}(A) I_n
        =
    \| A\| I_n .
\end{align}
Therefore, if $\alpha,\beta\in(0,\infty)$ satisfy $\alpha \le \sigma_{\rm min}(A)$ and $\sigma_{\rm max}(A) \le \beta$,
then $\kappa(A) \le {\beta}/{\alpha}$.
Thus, if $\lambda = 1$ and $(\phi_k )_{k=0}^\infty$ is persistently exciting with $N,\alpha, \beta$ given by Definition \ref{def:persistent_Exc}, then   \eqref{eq:Pkinv_bounds_wo_forgetting} implies that 
\begin{align}
    \kappa(P_k)
        \le 
             \dfrac {\beta}{\alpha}.
\end{align}
Similarly, if $\lambda \in (0,1)$ and $(\phi_k )_{k=0}^\infty$ is persistently exciting with $N,\alpha, \beta$  given by Definition \ref{def:persistent_Exc}, then \eqref{eq:Pkinv_bounds} implies that 
\begin{align}
    \kappa(P_k)
        \le 
             \dfrac
                {\beta + (1-\lambda^{N+1}) \| P_N^{-1} \|}
                {\lambda^N (1-\lambda) \alpha}.
\end{align}
However, as shown by Example \ref{exam:PE_Pk_bounds_2}, in the case where $\SeqPhi$ is not persistently exciting, there might not exist $\alpha>0$ satisfying \eqref{eq:persistent_def}, and thus $\kappa(P_k)$ cannot be bounded.
Hence $\kappa(P_k)$ can be used to determine whether or not $\SeqPhi$ is persistently exciting, where a bounded condition number implies that $\SeqPhi$ is persistently exciting, and a diverging condition number implies that $\phi_k$ is not persistently exciting, as illustrated by the following example.
\cite{Benesty2004} provides a recursive algorithm for computing $\kappa(P_k)$.

\begin{exam}
    \label{exam:persistency_conditionNumber}
    \textit{Using the condition number of $P_k$ to determine whether $\SeqPhi$ is persistently exciting.}
    Consider the 5th-order system
    \begin{align}
        y_k
            =
                \dfrac
                    {0.68 \shiftq^4 - 0.16 \shiftq^3 - 0.12 \shiftq^2 - 0.18 \shiftq + 0.09}
                    {\shiftq^5 - \shiftq^4 + 0.41 \shiftq^3 - 0.17 \shiftq^2 - 0.03  \shiftq + 0.01}
                u_k,
        \label{eq:5thOrderG}
    \end{align}
    % where $\shiftq$ is the forward-shift operator, and
    where $u_k$ is given by \eqref{eq:u_harm}.
    To apply RLS, let $\theta$ consist of the coefficients in \eqref{eq:5thOrderG} and let 
    \begin{align}
        \phi_{k}
            &=
                [u_{k-1} \ \cdots \ u_{k-5} \ 
                y_{k-1} \ \cdots \ y_{k-5} ],\label{eq:Reg_IIR}
    \end{align} 
    % where $u_k$ is given by \eqref{eq:u_harm}, and
    % \begin{align}
    %     y_{k}
    %         =
    %             \frac
    %                 {\shiftq + 0.2}
    %                 {\shiftq^2 -0.6 \shiftq + 0.8}
    %             u_k 
    %     \label{eq:2ndOrderG}
    % \end{align}
    % where $\shiftq$ is the forward-shift operator. 
    so that $y_k = \phi_k\theta.$
    Letting $P_0 = I_{10}$, 
    Figure \ref{fig:CSM_forgetting_Persistency_harm} shows the singular values of $F_{j,j+20}$ and
    the singular values and  condition number of $P_k$ for $\lambda = 1$ and $\lambda = 0.99$.
    In particular, the smallest singular value of $F_{j,j+20}$ is essentially zero, which indicates that $\SeqPhi$ is not persistently exciting.
    Consequently, in the case where $\lambda = 0.99$, $P_k$ becomes ill-conditioned. % 
    \EndExample
\end{exam}
\begin{figure}[h!]
    \centering
    \includegraphics[width=1\textwidth]
    {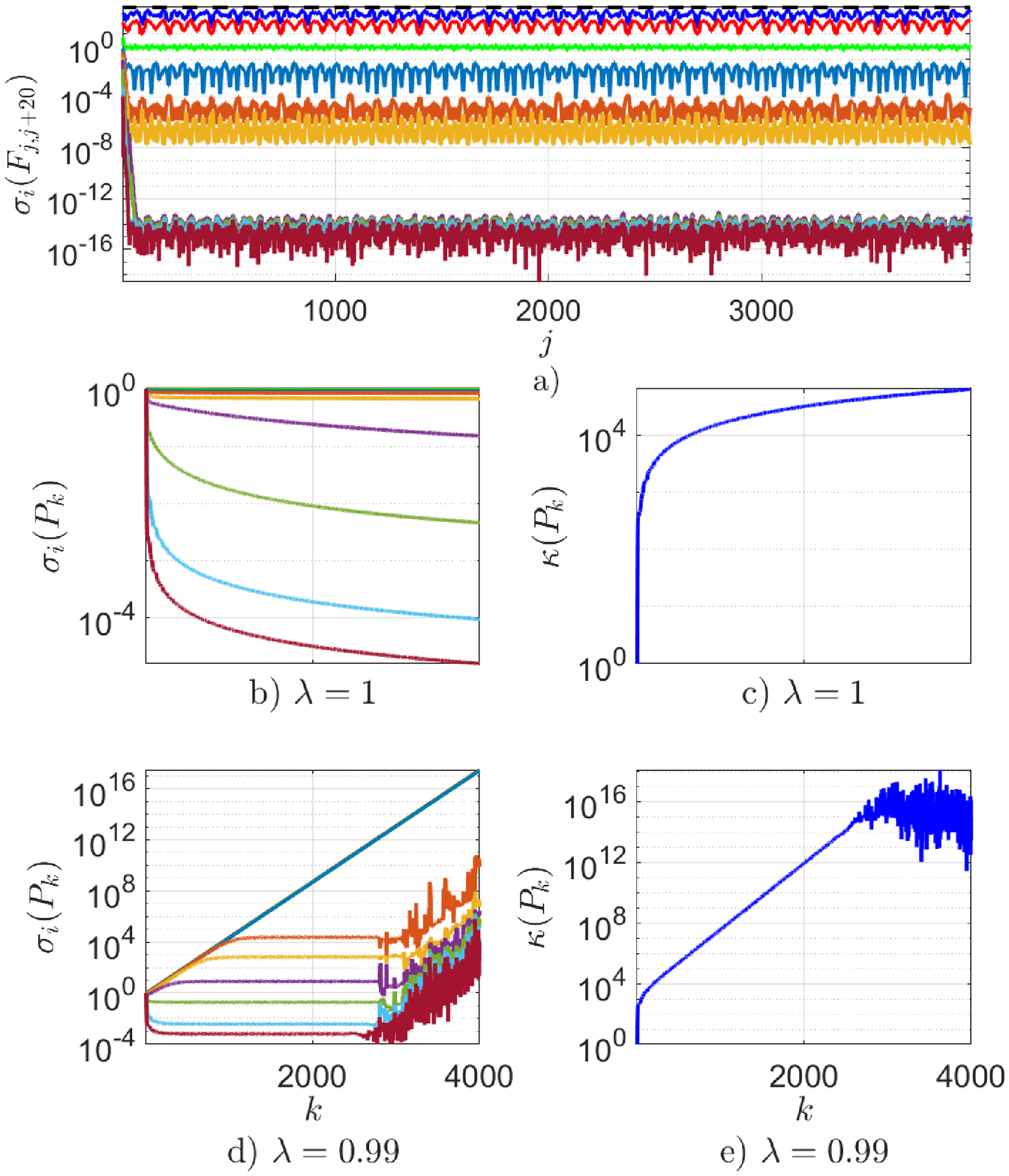}
    \caption
        {
        Example \ref{exam:persistency_conditionNumber}.
        Using the condition number of $P_k$ to evaluate persistency.
        a) shows the singular values of $F_{j,j+20}$, where the singular values of $F_{j,j+20}$ close to machine precision $(\approx 10^{-15})$ are essentially zero, thus implying that $\SeqPhi$ is not persistently exciting. 
        b) and c) shows the singular values and the condition number of $P_k$ for $\lambda = 1$.
        Note that the six singular values of $P_k$ decrease due to the presence of three harmonics in $u_k$. 
        d) and e) shows the singular values and the condition number of $P_k$ for $\lambda = 0.99$.
        Note that the six singular values of $P_k$ remain bounded due to the presence of three harmonics in $u_k$. 
        However, $P_k$ becomes ill-conditioned due to the lack of persistent excitation.
        }
    \label{fig:CSM_forgetting_Persistency_harm}
\end{figure}

In Example \ref{exam:persistency_conditionNumber}, the regressor $\SeqPhi$ is not persistently exciting.
Consequently, in the case where $\lambda = 1$, it follows from \eqref{PkFkP0} that $P_k$ is bounded by $P_0 $, and thus all of the singular values of  $P_k$ are bounded; this property is illustrated by Figure \ref{fig:CSM_forgetting_Persistency_harm}.
However, Figure \ref{fig:CSM_forgetting_Persistency_harm} also shows that not all of the singular values of $P_k$ converge to zero.
%
%
%
% Yes. 
On the other hand, in the case where $\lambda = 0.99$, Figure \ref{fig:CSM_forgetting_Persistency_harm} shows that some of the singular values of $P_k$ are bounded, whereas the remaining singular values diverge. 
This example thus shows that singular values can diverge due to the lack of persistent excitation with $\lambda\in(0,1).$
% 
%

%\clearpage

\section{Lyapunov Analysis of the Parameter Error}

Let $k\ge0,$ and consider the system
\begin{align}
    x_{k+1} = f(k,x_k),
    \quad
    % x_{k_0} = x_0,
    % \quad 
        % k \ge 0, 
    \label{eq:NonLinSys}
\end{align}
where $x_k \in \BBR^n,$  $f\colon \{0,1,2, \ldots \} \times \BBR^n\to\BBR^n$ is continuous, and, for all $k \ge 0$, $f(k,0) = 0$.
Let $ \SD \subset \BBR^n$ be an open set such that $0 \in \SD$.
%
% Let $\| A \|$ denote the maximum singular value of $A$. %a norm on $\BBR^n.$

\begin{defin}
    \label{def:LS1}
    The zero solution of \eqref{eq:NonLinSys} is \textit{Lyapunov stable} if, 
    for all $\varepsilon > 0$ and $k_0  \ge 0$, 
    there exists $\delta(\varepsilon, k_0) > 0$ such that, for all $x_{k_0} \in \BBR^n$ satisfying 
    $\| x_{k_0} \| < \delta(\varepsilon, k_0)$, it follows that, for all $k \ge k_0$,  $\| x_k \| < \varepsilon$.
\end{defin}

\begin{defin}
    \label{def:ULS1}
    The zero solution of \eqref{eq:NonLinSys} is \textit{uniformly Lyapunov stable} if, 
    for all $\varepsilon > 0$, 
    there exists $\delta(\varepsilon) > 0$ such that, for all $k_0  \ge 0$ and all $x_{k_0} \in \BBR^n$ satisfying 
    $\| x_{k_0} \| < \delta(\varepsilon)$, it follows that, for all $k \ge k_0$,  $\| x_k \| < \varepsilon$.
\end{defin}

\begin{defin}
    \label{def:GAS1}
    The zero solution of \eqref{eq:NonLinSys} is \textit{globally asymptotically  stable} if it is Lyapunov stable and, 
    for all $k_0  \ge 0$ and all $x_{k_0} \in \BBR^n$, it follows that 
    $\lim_{k \to \infty} x_k = 0$.
\end{defin}

\begin{defin}
    \label{def:GUGS1}
    The zero solution of \eqref{eq:NonLinSys} is \textit{uniformly globally geometrically stable} if there exist $\alpha > 0$ and $\beta>1$ such that, 
    for all $k_0  \ge 0$ and all $x_{k_0}  \in \BBR^n$, it follows that, for all $k \ge k_0$, 
    $\| x_k \| \le \alpha \| x_{k_0} \| \beta^{-k}.$
\end{defin}

Note that, if the zero solution of \eqref{eq:NonLinSys} is uniformly globally geometrically stable, then it is uniformly globally aymptotically stable as well as uniformly Lyapunov stable.

The following three results are specializations of Theorem 13.11 given in \cite[pp. 784, 785]{Haddad2008}.
 
\begin{theo} \label{theo:LS1}
    Consider \eqref{eq:NonLinSys}, and assume  there exist a continuous function 
    $V\colon \Zplus \times \SD \to \BBR$ and 
    % a class $\SK$ function $\alpha_1$ 
    $\alpha_1 > 0$ such that, for all $k \ge 0$ and $x \in \SD$,
    \begin{gather}
        V(k,0)
            = 
                0, 
        \label{eq:LS_Cond1} 
        \\
        \alpha_1  \| x \| ^2
            \le
                V(k,x), 
        \label{eq:LS_Cond2} 
        \\
        V(k+1, f(k,x)) - V(k,x)
            \le
                0.
        \label{eq:LS_Cond3} 
    \end{gather}
    Then, the zero solution of \eqref{eq:NonLinSys} is {Lyapunov stable}.
\end{theo}

\begin{theo} \label{theo:ULS1}
    Consider \eqref{eq:NonLinSys}, and assume there exist a continuous function 
    $V \colon \Zplus \times \SD \to \BBR$ and 
    % class $\SK$ functions $\alpha_1 $, $\beta_1 $ 
    $\alpha_1, \beta_1 > 0$
    such that, for all $k \ge 0$ and $x \in \SD$,
    \begin{gather}
        V(k,0)
            = 
                0, \label{eq:ULS_Cond1}  \\
        \alpha_1  \| x \|^2 
            \le
                V(k,x) 
            \le
                \beta_1  \| x \| ^2 , \label{eq:ULS_Cond2}  \\
        V(k+1, f(k,x)) - V(k,x)
            \le
                0. \label{eq:ULS_Cond3} 
    \end{gather}
    Then, the zero solution of \eqref{eq:NonLinSys} is {uniformly Lyapunov stable}.
\end{theo}

\begin{theo} \label{theo:GUGS1} 
    Consider \eqref{eq:NonLinSys}, and assume there exist a continuous function 
    $V \colon \Zplus \times \BBR^n \to \BBR,$  and $\alpha_1, \beta_1, \gamma_1 > 0,$ such that, for all $k \ge 0$ and $x \in \BBR^n$,
    \begin{gather}
        \alpha_1 \| x \|^2
            \le
                V(k,x) 
            \le
                \beta_1 \| x \|^2 , \label{eq:GUGS_Cond1}  \\
        V(k+1, f(k,x)) - V(k,x)
            \le
                -\gamma_1 \| x \|^2. \label{eq:GUGS_Cond2} 
    \end{gather}
    Then, the zero solution of \eqref{eq:NonLinSys} is {uniformly globally geometrically stable}.
\end{theo}

The following result uses Theorems \ref{theo:LS1}-\ref{theo:GUGS1} to prove that, if $(\phi_k)_{k=0}^\infty$ is persistently exciting, then the RLS estimate $\theta_k$ with $\lambda \in (0,1)$ converges to $\theta$ in the sense of Definition \ref{def:GUGS1}. 
A related result is given in \cite{johnstone1982exponential}.
%
%A proof is given for completeness.

\begin{theo}
    \label{prop:RLS_stability_LP}
    Assume that $(\phi_k)_{k=0}^\infty$ is persistently exciting, 
    let $N,\alpha, \beta$ be given by Definition \ref{def:persistent_Exc}, let $R\in\BBR^{n\times n}$ be positive definite, define $P_{0} \isdef R^{-1}$, let $\lambda \in(0,1],$ and, for all $k\ge0,$ let $P_k$ be given by \eqref{eq:PUpdate}.
    Then the zero solution of \eqref{eq:theta_error1} is Lyapunov stable.
    In addition, if $\lambda \in (0,1)$, then the zero solution of \eqref{eq:theta_error1} is uniformly Lyapunov stable and uniformly globally geometrically stable. 
\end{theo}

\begin{proof}
    Define the Lyapunov candidate
    \begin{align}
        V(k, x)
            \isdef
                x ^\rmT
                P_k^{-1}
                x, \nn
    \end{align}
    where $x \in \BBR^n$. 
    Note that, for all $k\ge0$, $V(k,0) = 0$, which confirms \eqref{eq:LS_Cond1}. 
    Next, defining
    \begin{align}
        f(k,x)
            &\isdef 
                (I_n - P_{k+1} \phi_k^\rmT \phi_k) x,
        \nn %\label{eq:f_def}
    \end{align}
    it follows that
    \begin{align}
        V({k+1}, f(k,x)) - V({k}, x)
            &=
                f(k,x) ^\rmT P_{k+1}^{-1} f(k,x) - 
                x ^\rmT P_k^{-1} x  \nn \\
            &=
                x ^\rmT 
                [ 
                    (I_n - \phi_k^\rmT \phi_k P_{k+1} )
                    P_{k+1}^{-1} 
                    (I_n - P_{k+1} \phi_k^\rmT \phi_k)
                    -
                    P_k^{-1}
                ]
                x \nn \\
            &=
                x ^\rmT 
                [ 
                    (P_{k+1}^{-1} - \phi_k^\rmT \phi_k  )
                    (I_n - P_{k+1} \phi_k^\rmT \phi_k)
                    -
                    P_k^{-1}
                ]
                x \nn \\
            &=
                x ^\rmT 
                [ 
                    P_{k+1}^{-1} - 
                    % \phi_k^\rmT \phi_k -
                    2\phi_k^\rmT \phi_k +
                    \phi_k^\rmT \phi_k P_{k+1} \phi_k^\rmT \phi_k
                    -
                    P_k^{-1}
                ]
                x \nn \\  
            &=
                x ^\rmT 
                [ 
                     (\lambda -1 )P_{k}^{-1} 
                    -
                    \phi_k^\rmT ( I_p - \phi_k P_{k+1} \phi_{k}^\rmT) \phi_k
                ]
                x. %\nn \\    
        \label{eq:deltaV_le_zero}
    \end{align}
    % 
    % 
    % which confirms \eqref{lyapcond3}.
    
    First, consider the case where $\lambda=1$. 
    It follows from \eqref{eq:Pk_recursive} with $\lambda = 1$ that $P_0^{-1} \le P_k^{-1}$, and thus, for all $k \ge 0$,
    % It follows from Proposition \ref{prop:Pkinv_bounds_wo_forgetting} that, for all $k \ge N+1$,
    \begin{align}
        \sigma_{\rm min} (P_0^{-1}) \| x \|^2 \le V(k,x), \nn
    \end{align}
    which confirms \eqref{eq:LS_Cond2} with
    $\alpha_1(\| x \|) = \sigma_{\rm min} (P_0^{-1}) \| x \|^2$.
    Next, note that
    \begin{align}
        I_p - \phi_k P_{k+1} \phi_{k}^\rmT
            =
                I_p - 
                [
                \phi_k P_{k} \phi_{k}^\rmT -
                \phi_k P_{k} \phi_k ^\rmT
                    \left(
                        I_p + \phi_k  P_{k} \phi_k^\rmT
                    \right)^{-1}
                    \phi_k P_{k} \phi_{k}^\rmT
                ].
        \label{eq:Ip-phi_Pkp1_phi}
    \end{align}
    Using \eqref{eq:deltaV_le_zero}, \eqref{eq:Ip-phi_Pkp1_phi}, and Lemma \ref{Lem:Lemma_A_identity} from ``Three Useful Lemmas" yields \eqref{eq:LS_Cond3}.
    % Next, \eqref{lyapcond3} is confirmed by \eqref{eq:deltaV_le_zero}.
    % 
    It thus follows from Theorem \ref{theo:LS1} that the zero solution of \eqref{eq:theta_error1} is Lyapunov stable.

    Next, consider the case where $\lambda \in (0,1)$. 
    It follows from Proposition \ref{prop:Pkinv_bounds} that, for all $k \ge N+1$,
    \begin{align}
        \frac{\lambda^N(1-\lambda) \alpha} 
             {1 - \lambda^{N+1} }
             \| x \|^2
            \le
                V(k, x)
            &\le 
                % \frac{\beta }{1 - \lambda^{N+1}} I_n  +xxx. \
            \dfrac{\beta}{1-\lambda^{N+1}}  
                \| x \|^2
                +
                x^\rmT
                P_{N}^{-1}
                x \nn \\
            &\le
                \left(
                \dfrac{\beta}{1-\lambda^{N+1}} +
                \| P_N^{-1} \|
                \right)
                \| x \|^2,
                % \label{ineq1} 
                \nn
        %\label{eq:Pkinv_bounds}
    \end{align}
    which confirms \eqref{eq:ULS_Cond2} for all $\lambda \in (0,1)$ with $\alpha_1  = 
    \dfrac{\lambda^N(1-\lambda) \alpha} 
             {1 - \lambda^{N+1} } $, 
    and $\beta_1  =  
    \dfrac{\beta}{1-\lambda^{N+1}} +
                \| P_N^{-1} \| .$
    Using \eqref{eq:deltaV_le_zero}, \eqref{eq:Ip-phi_Pkp1_phi}, and Lemma \ref{Lem:Lemma_A_identity} from ``Three Useful Lemmas", \eqref{eq:ULS_Cond3} is confirmed.
    It thus follows from Theorem \ref{theo:ULS1} that the zero solution of \eqref{eq:theta_error1} is uniformly Lyapunov stable.

    Furthermore, 
    \eqref{eq:GUGS_Cond1} is confirmed, 
    $\alpha_1  = 
    \frac{\lambda^N(1-\lambda) \alpha} 
             {1 - \lambda^{N+1} } $, 
    and $\beta_1  =  
    \frac{\beta}{1-\lambda^{N+1}} +
                \| P_N^{-1} \| $.
    Finally, 
    if $\lambda \in (0,1)$, then
    \begin{align}
        V({k+1}, f(k,x)) - V({k}, x)
            &\le
                (\lambda -1 ) x^\rmT P_k^{-1} x \nn \\
            % &\le (\lambda -1 ) \sigma_{\rm max} (P_k^{-1}) \| x \|_2^2 \nn \\
            &\le
                (\lambda -1 ) \left( \dfrac{\beta}{1-\lambda^{N+1}} + \| P_N^{-1} \| \right) \| x \|^2, \nn
    \end{align}
    which confirms 
    \eqref{eq:GUGS_Cond2} with , $\gamma_1 =  (1-\lambda )(\tfrac{\beta}{1-\lambda^{N+1}} + \| P_N^{-1} \|) .$
    It thus follows from Theorem \ref{theo:GUGS1} that the zero solution of \eqref{eq:theta_error1} is uniformly globally geometrically stable.
    %\EndProof
\end{proof}

The following result provides an alternative proof of Theorem \ref{prop:RLS_stability_LP} that does not depend on Theorems \ref{theo:LS1}-\ref{theo:GUGS1}.
In addition, this result considers the case
$\lambda =1$, where the RLS estimate $\theta_k$ converges to $\theta$ in the sense of Definition \ref{def:GAS1}.

\begin{theo} \label{prop:AS_GS}
    Assume that $(\phi_k)_{k=0}^\infty$ is persistently exciting, let $N,\alpha,\beta$ be given by Definition \ref{def:persistent_Exc}, 
    let $R\in\BBR^{n\times n}$ be positive definite, define $P_{0} \isdef R^{-1}$, let $\lambda \in (0,1],$ and, for all $k\ge0,$ let $P_k$ be given by \eqref{eq:PUpdate}.
    Then the zero solution of \eqref{eq:theta_error1} is globally asymptotically stable.
    Furthermore, if $\lambda \in (0,1)$, then the zero solution of \eqref{eq:theta_error1} is uniformly globally geometrically stable. 
\end{theo}

\begin{proof}
    Let $k_0 \ge 0$ and $\tilde \theta_{k_0} \in \BBR^{n}$.
    Then, it follows from \eqref{eq:theta_error_explicit} that, for all $k \ge k_0,$
    \begin{align}
        \| \tilde \theta_k \|
            &=
                \lambda^{k-k_0} \| P_{k} P_{k_0}^{-1}  \tilde \theta_{ k_0} \| 
            \nn \\
            &\le 
                \| P_{k} P_{k_0}^{-1}  \tilde \theta_{ k_0} \| 
            \nn \\
            &\le 
                \| P_{k} \|  \| P_{ k_0}^{-1} \| \|  \tilde \theta_{ k_0} \| .
        \label{eq:tilde_theta_norm}
    \end{align}
    First, consider the case where $\lambda = 1.$ %
    Let $\delta >0,$ and suppose that $\tilde \theta_{k_0}\in\BBR^n$ satisfies $\| \tilde \theta_{k_0} \| < \delta $.
    It follows from \eqref{eq:Pk_recursive} with $\lambda = 1$ that $ \| P_k \| \le \|P_0 \|$ and \eqref{eq:tilde_theta_norm}, that, for all $k \ge k_0,$  
    $%$\begin{align}
        \| \tilde \theta_k \|
            <
                \| P_{0} \|  
                \| P_{ k_0}^{-1} \| 
                \delta.
            % \nn
    $ %\end{align} 
    It thus follows from Definition \ref{def:LS1} with 
    $\varepsilon = \| P_{0} \| \| P_{ k_0}^{-1} \| \delta$
    that the zero solution of \eqref{eq:theta_error1} is Lyapunov stable.
    % Let $k_0 \ge 0.$ 
    % First consider the case where $\lambda = 1$. 
    %

    Next, let $\tilde \theta_{ 0} \in \BBR^n$.  Then, Proposition \ref{prop:Pkinv_bounds_wo_forgetting} implies that
    \begin{align}
        \lim_{k \to \infty }\tilde \theta_{k} 
            =
        \lim_{k \to \infty } P_{k} P_{ 0}^{-1}  \tilde \theta_{ 0}
            =
            0. \nn
    \end{align}
    It thus follows from Definition \ref{def:GAS1} that the zero solution of \eqref{eq:theta_error1} is globally asymptotically stable.

    Next, consider the case where $\lambda \in (0,1)$.
    Let $k_0 \ge 0$ and $\delta >0$, and let $\tilde \theta_{k_0}\in\BBR^n$ satisfy $\| \tilde \theta_{k_0} \| < \delta $.
    It follows from Proposition \ref{prop:Pkinv_bounds} and \eqref{eq:tilde_theta_norm} that, for all $k\ge {\rm max}( N+1, k_0)$,
    \begin{align}
        \| \tilde \theta_k \|
            &<
                \varepsilon , \nn
    \end{align}
    where
    \begin{align}
        \varepsilon
            \isdef
                % \dfrac
                % {1-\lambda^{N+1}}
                % {\lambda^N(1-\lambda)\alpha} \| P_{k_0}^{-1} \| \delta
                \dfrac
                {\beta + (1-\lambda^{N+1}) \| P_{N}^{-1} \|}
                {\lambda^N(1-\lambda)\alpha} 
                \delta
            . \nn
    \end{align}
    It thus follows from Definition \ref{def:ULS1} 
    that the zero solution of \eqref{eq:theta_error1} is uniformly Lyapunov stable.

    Next, let $\tilde \theta_{k_0} \in \BBR^n$.  Then, it follows from  \eqref{eq:theta_error_explicit} and Proposition \ref{prop:Pkinv_bounds} that, for all $\tilde \theta_{k_0} \in \BBR^n$ and $k \ge N+1$,
    \begin{align}
        \| \tilde \theta_{k} \|
            \le 
                \alpha_0 
                \| \tilde \theta_{k_0} \|
                \beta_0^{-k}, \nn
    \end{align}
    where $\beta_0 \isdef 1/\lambda$ and
    \begin{align}
        \alpha_0 
            \isdef
                \dfrac
                {\beta + (1-\lambda^{N+1}) \| P_{N}^{-1} \|}
                {\lambda^N(1-\lambda)\alpha}. \nn
                % \dfrac{1-\lambda^{N+1}}{\lambda^N(1-\lambda)\alpha}  \| P_{0}^{-1} \|.\nn
    \end{align}
    It thus follows from Definition \ref{def:GUGS1} that the zero solution of \eqref{eq:theta_error1} is uniformly globally geometrically stable, and thus globally asymptotically stable.
    % 
    % 
    %\EndProof
    % 
\end{proof}

%\clearpage

The following result shows that persistent excitation produces an infinite sequence of matrices whose product converges to zero.

\begin{prop}
\label{prop:Ak_PE}
Let $P_0\in\BBR^{n\times n}$ be positive definite, let $\lambda \in(0,1],$ and, for all $k\ge0,$ let $P_k$ be given by \eqref{eq:PUpdate}.
Then, for all $k\ge0,$ all of the eigenvalues of $P_{k+1} \phi_k^\rmT \phi_k$ are contained in $[0,1]$.
%and and at most $p$ of them are nonzero.
%
If, in addition, $\SeqPhi$ is persistently exciting, then
\begin{align}
    \lim_{k\to\infty} \SA_k = 0, 
    \label{eq:SA_k}
\end{align}
where
\begin{align}
    \SA_k
        \isdef
            (I_n - P_{k+1} \phi_k^\rmT \phi_k)\cdots (I_n - P_{1} \phi_0^\rmT \phi_0). 
\end{align}

\end{prop}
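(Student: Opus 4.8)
The plan is to prove the two assertions separately. For the first claim, observe from \eqref{eq:theta_error1} and \eqref{eq:theta_error2} that the update matrix satisfies $I_n - P_{k+1}\phi_k^\rmT\phi_k = \lambda P_{k+1}P_k^{-1}$. Since $P_{k+1}$ and $P_k^{-1}$ are positive definite, the matrix $P_{k+1}\phi_k^\rmT\phi_k = P_{k+1}^{1/2}(P_{k+1}^{1/2}\phi_k^\rmT\phi_k P_{k+1}^{1/2})P_{k+1}^{-1/2}$ is similar to the symmetric positive-semidefinite matrix $P_{k+1}^{1/2}\phi_k^\rmT\phi_k P_{k+1}^{1/2}$, so its eigenvalues are real and nonnegative. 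To obtain the upper bound of $1$, I would use \eqref{eq:Pk_recursive}, which gives $P_{k+1}^{-1} = \lambda P_k^{-1} + \phi_k^\rmT\phi_k \ge \phi_k^\rmT\phi_k$; pre- and post-multiplying by $P_{k+1}^{1/2}$ yields $I_n \ge P_{k+1}^{1/2}\phi_k^\rmT\phi_k P_{k+1}^{1/2}$, so every eigenvalue of the similar matrix $P_{k+1}\phi_k^\rmT\phi_k$ lies in $[0,1]$.

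For the convergence claim \eqref{eq:SA_k}, the key observation is that $\SA_k$ is precisely the state-transition matrix of the error recursion \eqref{eq:theta_error1}; that is, for the zero initial time $\tilde\theta_0$, one has $\tilde\theta_{k+1} = \SA_k \tilde\theta_0$. I would therefore invoke the stability result already established: by Theorem \ref{prop:AS_GS}, persistent excitation implies that the zero solution of \eqref{eq:theta_error1} is globally asymptotically stable, so $\lim_{k\to\infty}\tilde\theta_{k+1} = \lim_{k\to\infty}\SA_k\tilde\theta_0 = 0$ for every initial condition $\tilde\theta_0\in\BBR^n$. Taking $\tilde\theta_0 = e_1,\ldots,e_n$ to range over the standard basis vectors shows that each column of $\SA_k$ tends to zero, which is exactly \eqref{eq:SA_k}. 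Alternatively, and more directly, \eqref{eq:theta_error_explicit} with $l=0$ gives $\SA_k = \lambda^{k+1}P_{k+1}P_0^{-1}$, so the telescoped product has the closed form $\lambda^{k+1}P_{k+1}P_0^{-1}$, and I would apply the bounds of Proposition \ref{prop:Pkinv_bounds} (for $\lambda\in(0,1)$) or Proposition \ref{prop:Pkinv_bounds_wo_forgetting} (for $\lambda=1$) to drive this to zero.

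The cleaner route is the closed-form identity. For $\lambda = 1$, Proposition \ref{prop:Pkinv_bounds_wo_forgetting} gives $\lim_{k\to\infty}P_{k+1}=0$, so $\SA_k = P_{k+1}P_0^{-1}\to 0$ immediately. For $\lambda\in(0,1)$, Proposition \ref{prop:Pkinv_bounds} shows $P_{k+1}^{-1}$ is bounded below by a fixed positive-definite matrix, hence $\|P_{k+1}\|$ is uniformly bounded; the factor $\lambda^{k+1}$ then forces $\|\SA_k\| = \lambda^{k+1}\|P_{k+1}P_0^{-1}\| \le \lambda^{k+1}\|P_{k+1}\|\,\|P_0^{-1}\|\to 0$. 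I expect the main subtlety to be purely bookkeeping: verifying that the telescoping product in $\SA_k$ indeed collapses to $\lambda^{k+1}P_{k+1}P_0^{-1}$ via repeated application of \eqref{eq:theta_error2}, and confirming that $\SA_k$ corresponds to the index range stated (the product running from the factor with $\phi_0$ up through the factor with $\phi_k$). Once that identification is made, the limit follows directly from the already-proved bounds, so no genuinely hard estimate remains.
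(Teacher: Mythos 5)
Your proposal is correct and follows essentially the same path as the paper: the eigenvalue bound via similarity of $P_{k+1}\phi_k^\rmT\phi_k$ to the symmetric matrix $P_{k+1}^{1/2}\phi_k^\rmT\phi_k P_{k+1}^{1/2}$ combined with $\phi_k^\rmT\phi_k \le P_{k+1}^{-1}$, and the limit \eqref{eq:SA_k} by reading the columns of $\SA_k$ as parameter errors initialized at the standard basis vectors and invoking Theorem \ref{prop:AS_GS}. Your preferred ``cleaner route'' via the closed form $\SA_k = \lambda^{k+1}P_{k+1}P_0^{-1}$ together with Propositions \ref{prop:Pkinv_bounds_wo_forgetting} and \ref{prop:Pkinv_bounds} is also sound, but it essentially inlines what the appeal to Theorem \ref{prop:AS_GS} already encapsulates; indeed, the paper's own proof records the identity $\SA_k e_i = \lambda^{k+1}P_{k+1}P_0^{-1}e_i$ before citing that theorem.
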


\begin{proof}
    It follows from \eqref{eq:Pk_recursive} that, for all $k\ge0,$ $\phi_k^\rmT \phi_k \le P_{k+1}^{-1}$, and thus, for all $k\ge0,$ 
    $  P_{k+1}^{1/2} \phi_k^\rmT \phi_k P_{k+1}^{1/2} \le I_n$.
    Hence, for all $k\ge0,$  $$0\le\lambda_{\rm max} (P_{k+1}  \phi_k^\rmT \phi_k  )=\lambda_{\rm max} (P_{k+1}^{1/2} \phi_k^\rmT \phi_k P_{k+1}^{1/2}) \le1.$$
    To prove \eqref{eq:SA_k}, suppose that
    $\SeqPhi$ is persistently exciting, let $i\in\{1,\ldots,n\},$ and define $\theta_{0}\isdef e_i+\theta,$ where $e_i$ is the $i$th column of $I_n.$
    Note that $\tilde\theta_{0}\isdef  \theta_{0}-\theta = e_i.$
    Then, \eqref{eq:theta_error_explicit} implies that, for all $k\ge0$,  
    \begin{align}
        \tilde \theta_{k+1} 
            =
                \SA_k e_i
            = 
                 \lambda^{k+1} P_{k+1} P_{0}^{-1} e_i.\label{SAi} 
    \end{align}
    It follows from Theorem \ref{prop:AS_GS} that $\tilde \theta_k$ converges to zero. 
   Hence, \eqref{SAi} implies that the $i$th column of $\SA_k$ converges to zero as $k\to\infty.$
    It thus follows that every column of $\SA_k$ converges to zero as $k\to\infty,$
    which implies \eqref{eq:SA_k}. 
    % 
    %\EndProof    
    % 
\end{proof}

It follows from Theorem \ref{prop:AS_GS} that, if $\SeqPhi$ is persistently exciting, then, for all $\lambda \in (0,1]$, $\tilde \theta_k$ converges to zero. 
In addition, if $\lambda \in (0,1)$, then $\tilde \theta_k$ converges to zero geometrically, and thus the rate of convergence of $\| \tilde \theta_k \|$ is $O(\lambda^k)$. 
However, in the case $\lambda = 1,$ as shown in \cite{johnstone1982exponential} and the next example, $\tilde \theta_k$ converges to zero as $O(1/k),$ and thus the convergence is not  geometric.

\begin{exam}
\label{exam:ConvRate_lambda}
\textit{Effect of $\lambda$ on the rate of convergence of $\theta_k$.}
Consider the 3rd-order FIR system 
\begin{align}
    y_k
        =
            \frac{\shiftq^2 + 0.8 \shiftq + 0.5}{\shiftq^3}
            u_k.
\end{align}
%
% where $\shiftq$ is the forward-shift operator.
%
To apply RLS, let $\theta = [1\ 0.8\ 0.5]$, $\theta_0 = 0$, and $\phi_k =  [u_{k-1} \ u_{k-2} \ u_{k-3}]$, where the input $u_k$ is zero-mean Gaussian white noise with standard deviation 1.
Note that $\SeqPhi$ is persistently exciting. 
It thus follows from Theorem \ref{prop:AS_GS} that $\tilde \theta_k$ converges to zero. 
Figure \ref{fig:CSM_forgetting_lambda_array} shows the parameter-error norm $\| \tilde \theta_{k} \|  $ for several values of $P_0$ and $\lambda$ as well as the condition number of the corresponding $P_k$.
Note that the convergence rate of $\| \tilde \theta_k \|$ is $O(1/k)$ for $\lambda =1$ and geometric for all $\lambda \in (0,1)$. 
Furthermore, as $\lambda$ is decreased, the convergence rate of $ \theta_k$ increases; however, the condition number of $P_k$ degrades, and the effect of $P_0$ is reduced.
\EndExample
\end{exam}

\begin{figure}[h!]
    \centering
    \includegraphics[width=.9 \textwidth]{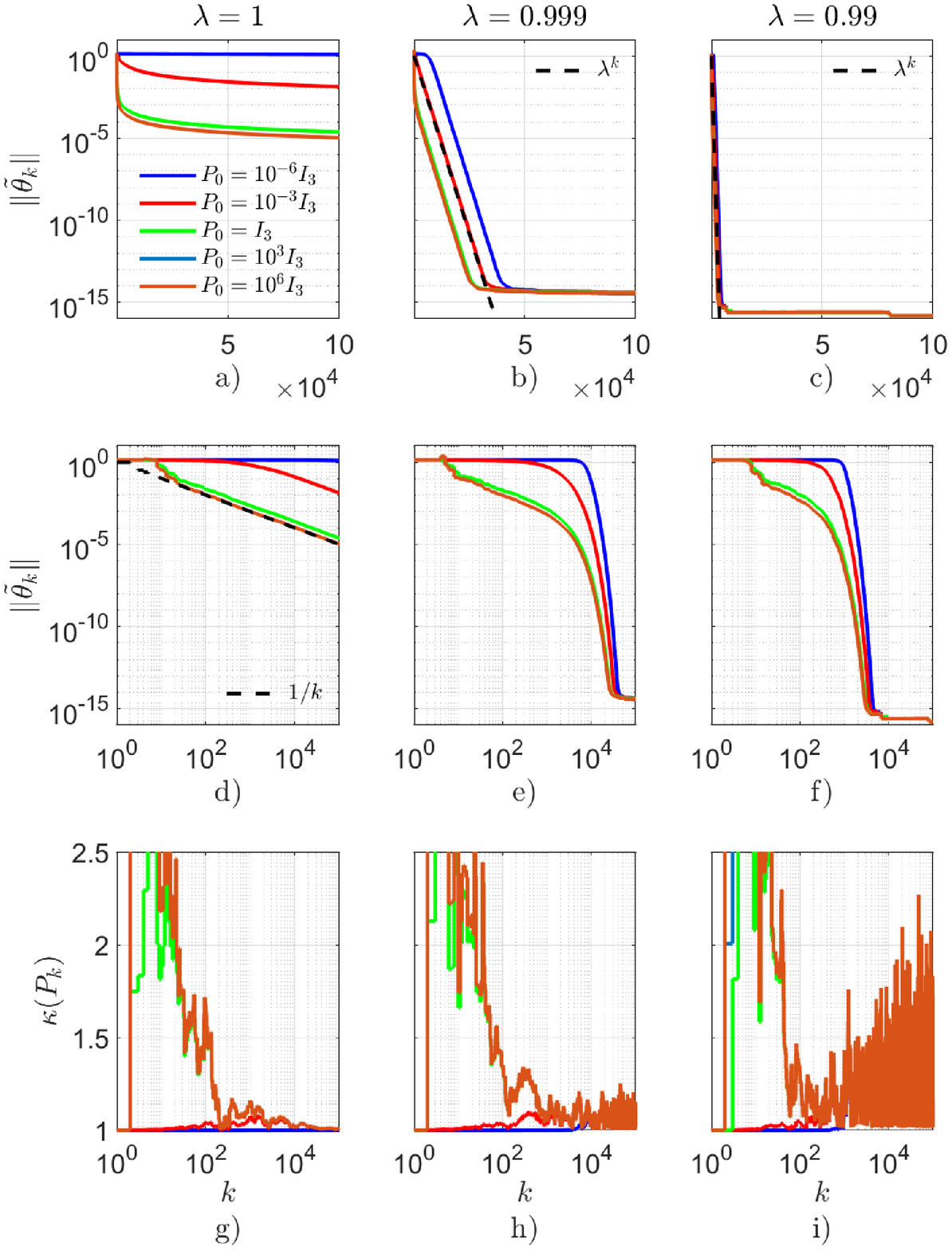}
    \vspace{-3ex}
    \caption
        {
            Example \ref{exam:ConvRate_lambda}.
            Effect of $\lambda$ on the rate of convergence of $\theta_k$. 
            a)-f) show the parameter error norm $\| \tilde \theta_{k} \|  $ for several values of $P_0$ and $\lambda$.
            Note that the slope of $-1$ between $\log \| \tilde \theta_k \|$ and $\log k$ in d) is consistent with the fact that the rate of convergence of $\| \tilde \theta_k \|$ is $O(1/k)$ for $\lambda = 1$.
            Similarly, the slope of $\log \lambda$ between $\log \| \tilde \theta_k \|$ and $k$ in b) and c) is consistent with the fact that the rate of convergence of $\| \tilde \theta_k \|$ is $O(\lambda^k)$ for $\lambda \in (0,1)$.
            g), h), and i) show the condition number of the corresponding $P_k$ for several values of $P_0$ and $\lambda$.
            Note that, as $\lambda$ is decreased, the convergence rate of $ \theta_k$ increases; however, the condition number of $P_k$ degrades, and the effect of $P_0$ is reduced.
        }
    \label{fig:CSM_forgetting_lambda_array}
\end{figure}

\section{Lack of Persistent Excitation}

This section presents numerical examples to investigate the effect of lack of persistent excitation. 
As shown in Example \ref{exam:PE_Pk_bounds_2} and Example \ref{exam:persistency_conditionNumber},
if $\SeqPhi$ is not persistently exciting and $\lambda = 1$, then some of the singular values of $P_k$ converge to zero, whereas the remaining singular values remain bounded. 
On the other hand, if $\SeqPhi$ is not persistently exciting and $\lambda \in (0, 1)$, then some of the singular values of $P_k$ remain bounded, whereas the remaining singular values diverge.  
% % 
% 
Furthermore, Proposition \ref{prop:z_converges} implies that the predicted error $z_k$ 
% and the parameter error $ \tilde \theta_k$ 
converges to zero whether or not $\SeqPhi$ is persistent.

\begin{exam}
    \label{exam:ScalarEstimation}
    \textit{Lack of persistent excitation in scalar estimation}.
    Let $n=1$, so that \eqref{eq:PUpdate}, \eqref{eq:thetaUpdate} are given by
    \begin{align}
            P_{k+1}
                &=
                    \dfrac{P_{k}}{\lambda + P_{k} \phi_k ^2} ,
            \label{eq:P_scalar}
                    \\
            \tilde \theta_{k+1}
                &=
                    \dfrac{\lambda \tilde \theta_{k}}{\lambda + P_{k} \phi_k ^2}.
            \label{eq:theta_scalar}
    \end{align} 
    Now, let $k_0\ge0$ and assume that, for all $k \ge k_0,$  $\phi_k = 0.$ 
    Therefore, for all $j\ge0$ and $N\ge1,$
    $F_{j,j+N}$ cannot be lower bounded as in \eqref{eq:persistent_def}, and thus
    $\SeqPhi$ is not persistently exciting.
    %
    %
    %
    %lambda = 1 case is next:
    Furthermore, in the case where $\lambda = 1$, it follows from the fact that $\phi_k = 0$ for all $k\ge k_0$  that $P_k$ and $\tilde \theta_k$ converge in $k_0$ steps to $\overline P\ne0$ and $\overline {\tilde \theta}$, respectively.
    Furthermore, if $\theta_0 \ne \theta,$ then $\overline {\tilde \theta}\ne0.$
    %
    %
    %
    %
    %lambda < 1 case is next
    However, in the case where $\lambda \in (0, 1)$, it follows that $P_k$ diverges geometrically, whereas, as in the case where $\lambda = 1,$ $\tilde \theta_k$ converges in $k_0$ steps.
     Therefore, for all $\lambda\in(0,1],$
    since $\phi_k=0$ for all $k \ge k_0,$ it follows from \eqref{eq:P_scalar} and \eqref{eq:theta_scalar} that, for all $k \ge k_0,$ the minimum value of \eqref{eq:J_LS} is achieved in a finite number of steps.
    Consequently, RLS provides no further refinement of the estimate $\theta_k$ of $\theta,$ and thus $\overline {\tilde \theta}\ne0$ implies that $\theta_k$ does not converge to $\theta.$

    Alternatively, assume that, for all $k\ge0,$ $\phi_k = \overline \phi,$ where  $\overline \phi\neq 0$. 
    Then it follows from Definition \ref{def:persistent_Exc} with $N=1$, $\alpha = \overline{\phi}^2$, and $\beta  = 3\overline{\phi}^2$ that $\SeqPhi$ is persistently exciting. 
    If $\lambda = 1$, then both $P_k$ and $\tilde \theta_k$ converge to zero.
    However, if $\lambda \in (0, 1)$, then $P_k$ converges to $\tfrac{1-\lambda}{ \overline \phi^2}$ and $\tilde \theta_k$ converges geometrically to zero. 
    Table \ref{tab:RLS_PE_lambda} shows the asymptotic behavior of $\tilde \theta_k $ and $ P_k$ for both of these cases. 
    \EndExample
\end{exam}

\begin{table}[h!]
    \centering
    \begin{tabular}{|l|c|c|}
        \hline
        Excitation $\backslash$ $\lambda$  & $\lambda = 1$ & $\lambda \in (0,1)$   \\
        \hline
        Not persistently exciting  &    $\tilde \theta_k \to \overline {\tilde \theta}, P_k \to \overline P$  &    $\tilde \theta_k \to \overline {\tilde \theta}, P_k$ diverges  \\
        \hline
        Persistently exciting  &    $\tilde \theta_k\to0, P_k  \to 0$    &  $\tilde \theta_k \to 0, P_k \to \tfrac{1-\lambda}{ \overline \phi^2}$ \\
        \hline
    \end{tabular}
    \vspace{1ex}
    \centering
    \caption{Asymptotic behavior of RLS in Example \ref{exam:ScalarEstimation}.  
    In the case of persistent excitation with $\lambda<1,$ the convergence of $\tilde\theta_k$ is geometric.}
    \label{tab:RLS_PE_lambda}
\end{table}

\begin{exam}\label{exam:LoP_n2}
    \textit{Subspace-constrained regressor.}
    Consider  \eqref{eq:process}, where $\phi_k = (\sin \tfrac{2\pi k}{100}) [1 \ 1]$ and $\theta = [0.4 \ 1.4]^\rmT$.
    To estimate $\theta$ using RLS, let $P_0 = I_2$ and $\theta_0 = 0$.
    Figure \ref{fig:CSM_forgetting_LoP_n_2_100} shows the estimate $\theta_k$ of $\theta$ with $\lambda = 1$ and $\lambda = 0.99$.
    Note that all regressors $\phi_k$ lie along the same one-dimensional subspace, and thus, $\SeqPhi$ is not persistently exciting. 
    It follows from \eqref{eq:theta_in_range_PHI} that the estimate $\theta_k$ of $\theta$ lies in this subspace.
    For $\lambda = 1$, note that  one singular value decreases to zero, whereas the other singular value is bounded. 
    Note that $\tilde \theta_k$ converges along the singular vector corresponding to the bounded singular value. 
    For $\lambda = 0.99,$ one singular value is bounded, whereas the other singular value diverges. %
    Note that $\tilde \theta_k$ converges along the singular vector corresponding to the diverging singular value. 
    \EndExample    
\end{exam}

\begin{figure}[h!]
    \centering 
    \begin{subfigure}[b]{1\textwidth}
        \centering
        \includegraphics[width=0.6\textwidth]{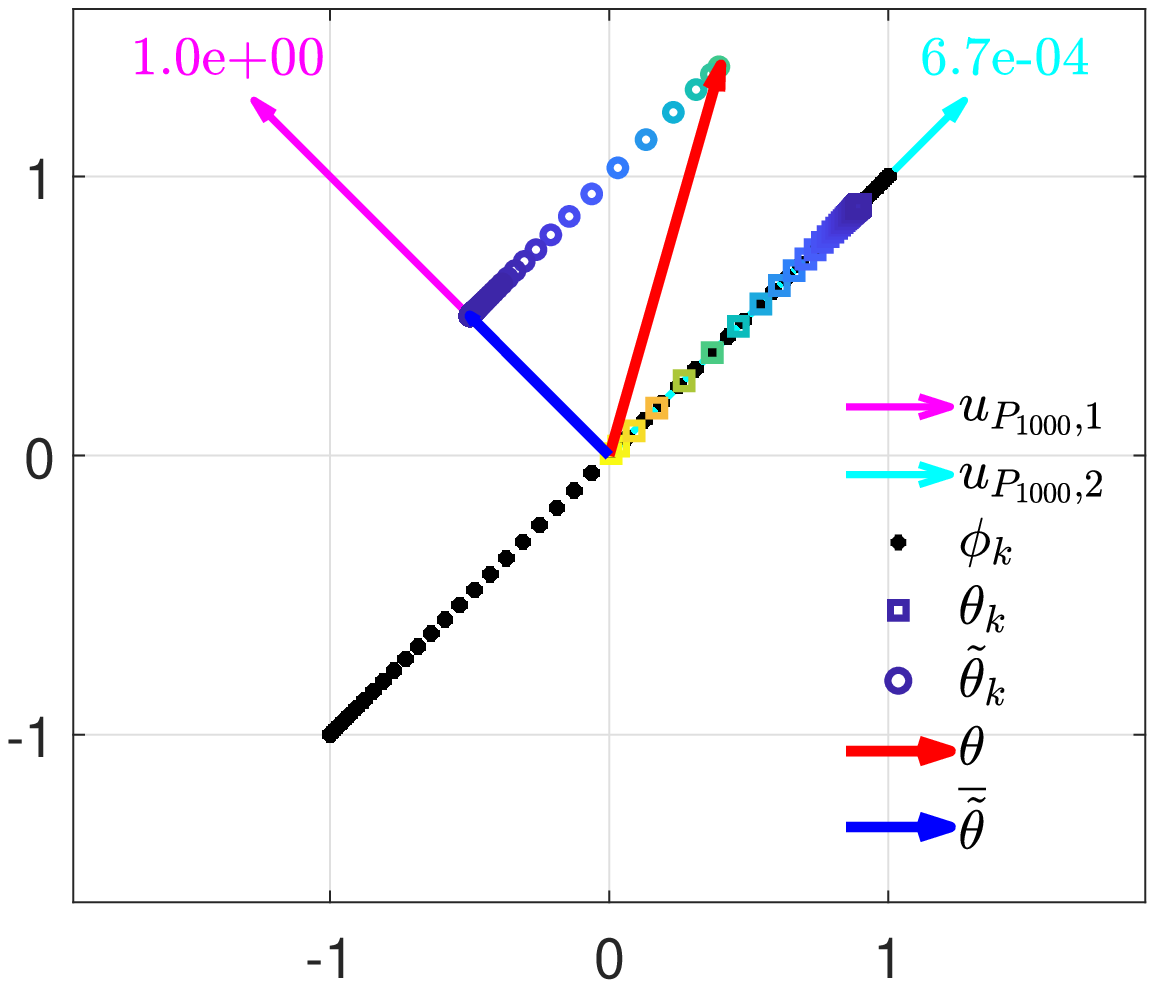}
        \caption*{$\lambda = 1$}
        \label{Fig.CSM_forgetting_LoP_n_2_100}
    \end{subfigure}
    
    \begin{subfigure}[b]{1\textwidth}
        \centering
        \includegraphics[width=0.6\textwidth]{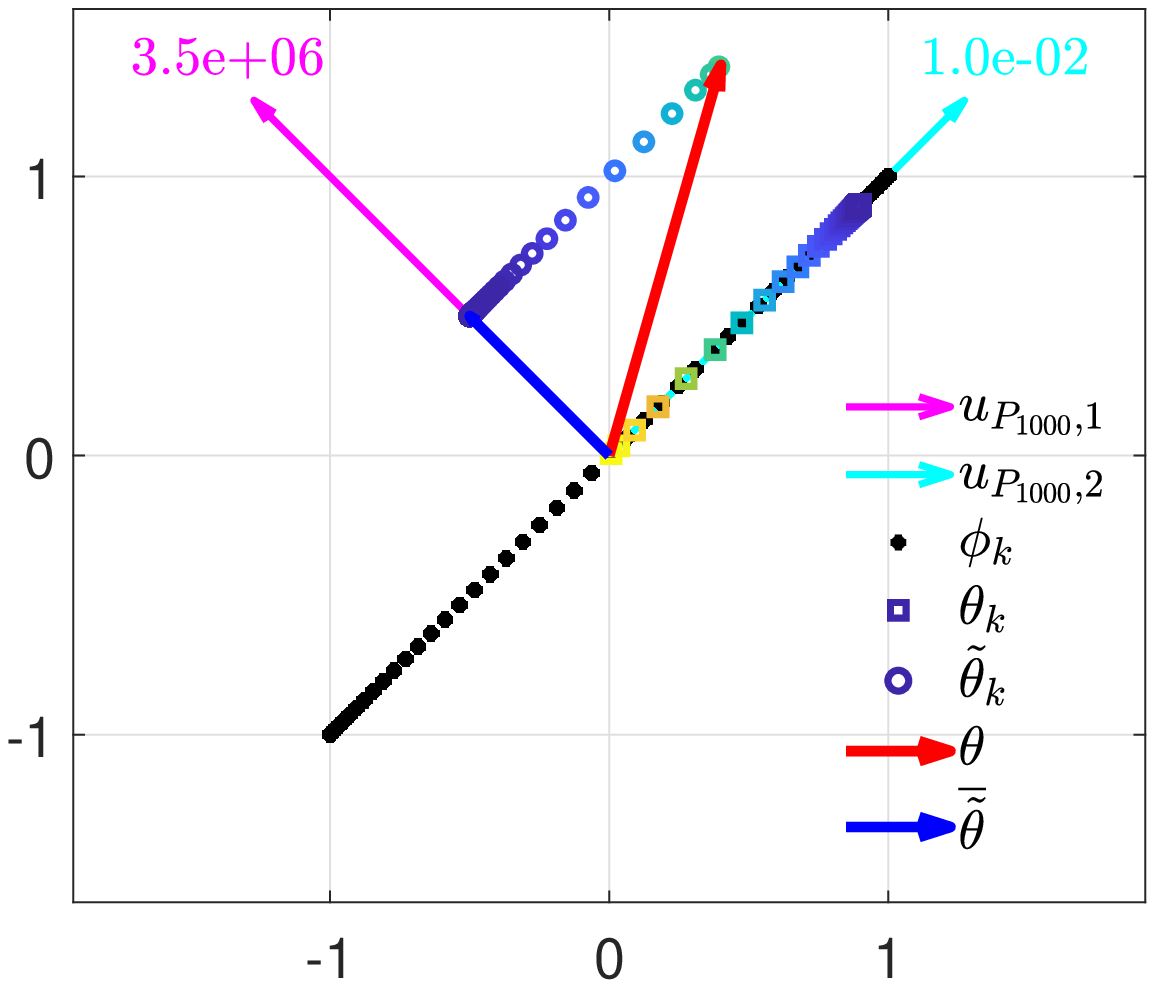}
        \caption*{$\lambda = 0.99$}
        \label{Fig.CSM_forgetting_LoP_n_2_99}
    \end{subfigure}
    \caption
        {
            Example \ref{exam:LoP_n2}. 
            Subspace constrained regressor.
            The first component of each vector is plotted along the horizontal axis, and the second component is plotted along the vertical axis. 
            The singular values $\sigma_i(P_{1000})$ are shown with the corresponding singular vector $u_{P_{1000},i}$.
            All regressors $\phi_k$ lie along the same one-dimensional subspace, and thus, $\SeqPhi$ is not persistently exciting. 
            Consequently, each estimate $\theta_k$ of $\theta$ lies in this subspace.
            The color gradient from yellow to blue of $\theta_k$ and $\tilde \theta_k$ shows the evolution from $k=1$ to $k=1000$.
            In a), the singular value corresponding to the cyan singular vector decreases to zero, whereas the singular value corresponding to the magenta singular vector is bounded. 
            Note that $\tilde \theta_k$ converges along the singular vector corresponding to the bounded singular value. 
            In b), the singular value corresponding to the cyan singular vector is bounded, whereas the singular value corresponding to the magenta singular vector diverges. 
            Note that $\tilde \theta_k$ converges along the singular vector corresponding to the diverging singular value.
        }
    \label{fig:CSM_forgetting_LoP_n_2_100}
\end{figure}

\begin{exam}
\label{exam:LackofPE_theta_k}
% \textit{Effect of lack of persistent excitation on $\theta_k$.}
\textit{Lack of persistent excitation and finite-precision arithmetic.}
Consider the problem of fitting a 5th-order model to measured input-output data from the system \eqref{eq:5thOrderG}, where the input $u_k$ is given by \eqref{eq:u_harm}.
Note that $\phi_k$ is given by \eqref{eq:Reg_IIR}, and is not persistently exciting as shown in Example \ref{exam:persistency_conditionNumber}.
Let $P_0 = I_{10}$, $\theta_0 = 0$, and $\lambda = 0.999$. 
Figure \ref{fig:CSM_forgetting_5thOrder_fit_Standard} shows the predicted error $z_k $, 
the norm of the parameter error $\tilde \theta_k$, and 
the singular values and the condition number of $P_k$.
Note that the $\tilde \theta_k$ does not converge to zero and that six singular values of $P_k$ remain bounded due to the presence of three harmonics in the regresssor.
Due to finite-precision arithmetic, the computation becomes erroneous as $P_k$ becomes numerically ill-conditioned, and thus the estimate $\theta_k$ diverges. % once $P_k$ becomes numerically ill-conditioned. 
\EndExample
\end{exam}

\begin{figure}[h!]
    \centering
    \includegraphics[width=1 \textwidth]{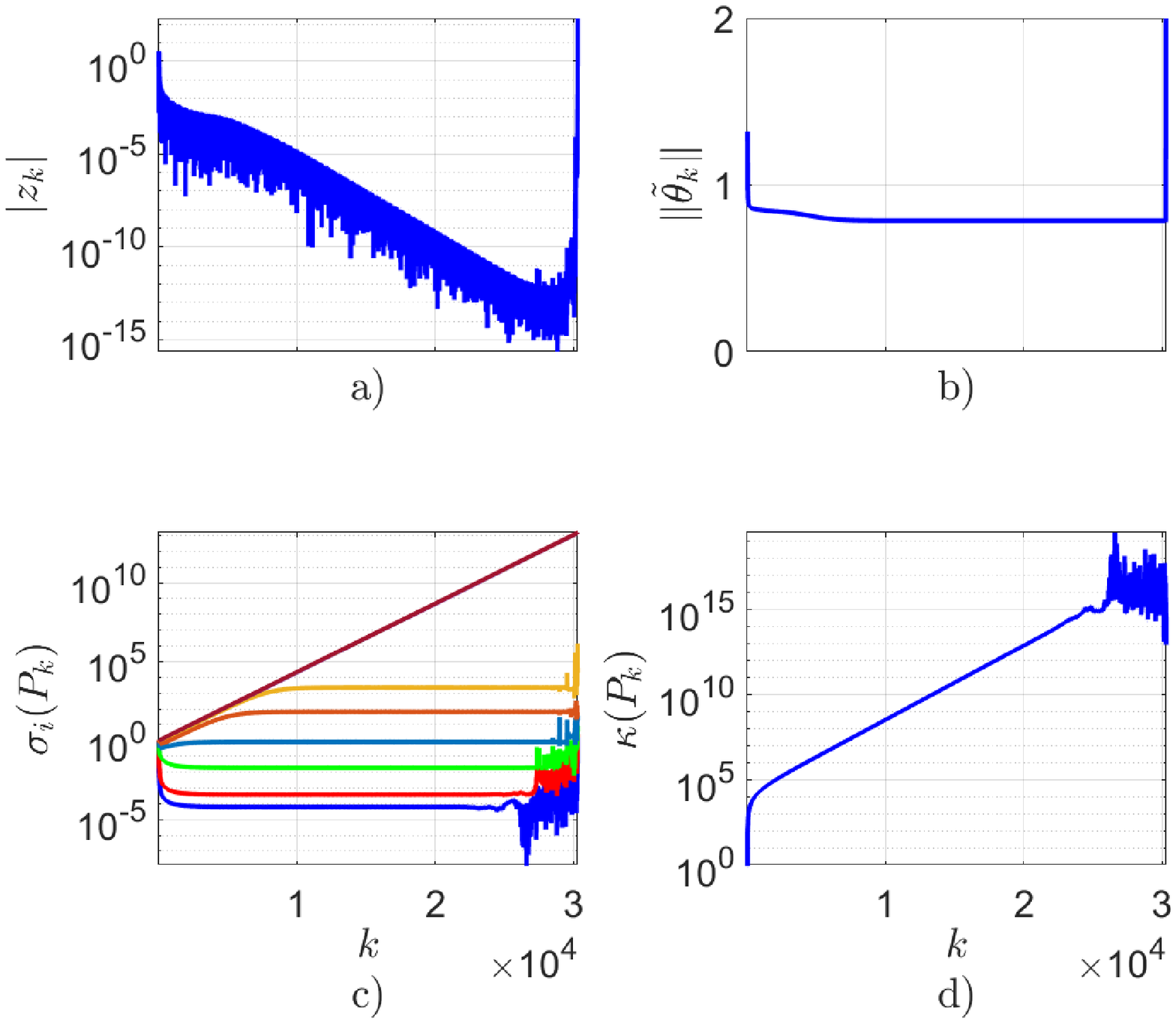}
    \caption{
    Example \ref{exam:LackofPE_theta_k}. Effect of lack of persistent excitation on $\theta_k$. 
    a) shows the predicted error $z_k$,  
    b) shows the norm of the parameter error $\tilde \theta_k$, 
    c) shows the singular values of $P_k$, and
    d) shows the condition number of $P_k$.
    Note that six singular values of $P_k$ remain bounded due to the presence of three harmonics in the regresssor.
    Due to finite-precision arithmetic, the computation becomes erroneous as $P_k$ becomes numerically ill-conditioned, and thus, the estimate $\theta_k$ diverges.
    }
    \label{fig:CSM_forgetting_5thOrder_fit_Standard}
\end{figure}

The numerical examples in this section show that, if $\lambda \in (0,1]$ and $\SeqPhi$ is not persistently exciting, then $\tilde \theta_k$ does not necessarily converge to zero. 
Furthermore, if $\lambda\in(0,1)$ and $\SeqPhi$ is not persistently exciting, then some of the singular values of $P_k$ diverge, and $\theta_k$ diverges due to finite-precision arithmetic when $P_k$ becomes numerically ill-conditioned. 
% 

%\clearpage
\section{Information Subspace}
Using the singular value decomposition,   \eqref{eq:Pk_recursive} can be written as
\begin{align}
    P_{k+1}^{-1}
        =
            \lambda 
            U_{k} \Sigma_{k} U_{k}^\rmT
            +
            U_{k} \psi_k^\rmT \psi_k U_{k}^\rmT
            ,
    \label{eq:Atheta_EF_SVD}
\end{align}
where $U_{k}\in \BBR^{n \times n}$ is an orthonormal matrix whose columns are the singular vectors of $P_{k}^{-1}$, 
$\Sigma_{k}\in \BBR^{n \times n}$ is a diagonal matrix whose  diagonal entries  are the corresponding singular values, and  
\begin{align}
    \psi_k 
        \isdef
            \phi_k U_{k}.
    \label{eq:psi_k_definition}
\end{align}
The columns of $U_k$ are the
% Each singular vector of $P_{k}^{-1}$ is 
\textit{information directions} at step $k$, and each row of $\psi_k$ is the projection of the corresponding row of $\phi_k$ onto the information directions. 
The norm of each column of $\psi_k$ thus indicates the \textit{information content} present in $\phi_k$ along the corresponding information direction.
The smallest subspace that is spanned by a subset of the information directions and that contains all rows of $\phi_k$  is the \textit{information-rich subspace} $\SI_k$ at step $k$.
Figure \ref{fig:CSM_Info_subspace} illustrates the information-rich subspace.

\begin{figure}[h!]
    \centering
    \includegraphics[width=1 \textwidth]{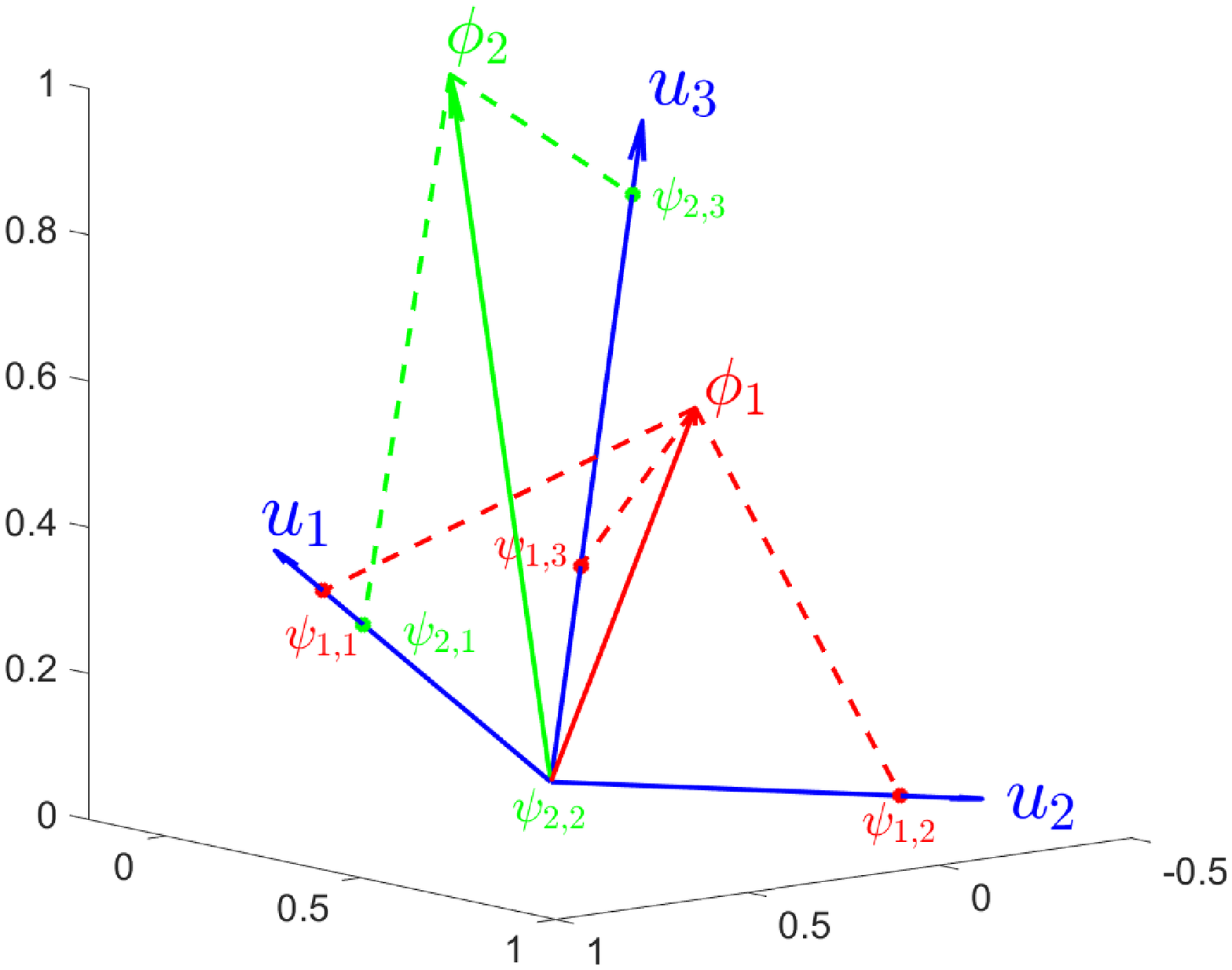}
    \caption{
    Illustrative example of the information-rich subspace. 
    Let $u_1, u_2$, and $u_3$ be the information directions (shown in blue). 
    % $\phi_1$ and $\phi_2$ are the regressors. 
    The regressor $\phi_1$ (shown in red) has new information along all three information directions, as shown by the nonzero values  $\psi_{1,1},$ $\psi_{1,2},$ and $\psi_{1,3}$; the information-rich subspace is thus $\SR([u_1 \ u_2 \ u_3])$.
    On the other hand, the regressor $\phi_2$ (shown in green) has new information only along $u_1$ and $u_3$, as shown by the nonzero values $\psi_{2,1}$ and $\psi_{2,3}$;  the information-rich subspace is thus $\SR([u_1 \ u_3])$.
    }
    \label{fig:CSM_Info_subspace}
\end{figure}

Now, consider the case where
\begin{align}
    \psi_k
        =
            \matl{cc}
                \psi_{k,1} & 
                0_{p \times (n-n_1)}
            \matr,
    \label{eq:psi_k_0}
\end{align}
where $\psi_{k,1} \in \BBR^{p \times n_1}$.
It follows from \eqref{eq:psi_k_0} that $\phi_k$ provides new information along the first $n_1$ columns of $U_k$; these directions constitute the information-rich subspace.
It thus follows from \eqref{eq:Atheta_EF_SVD} and \eqref{eq:psi_k_0} that $P_{k+1}^{-1}$ is given by
\begin{align}
    P_{k+1}^{-1}
        =
            U_{k} 
            \matl{cc}
                \lambda \Sigma_{k,1} + \psi_{k,1}^\rmT \psi_{k,1}  & 0 \\
                0               & \lambda \Sigma_{k,2}
            \matr
            U_{k}^\rmT,
    \label{eq:Atheta_EF_SVD_Analysis}
\end{align}
where $\Sigma_{k,1} \in \BBR^{n_1 \times n_1}$ is the diagonal matrix whose diagonal entries are the first $n_1$  singular values of $P_{k}^{-1}$, and $\Sigma_{k,2}$ is the diagonal matrix whose diagonal entries are the remaining $n-n_1$ singular values of $P_{k}^{-1}$.
In particular, writing 
\begin{align}
    U_k
        = 
            \matl{cc}
                U_{k,1} & U_{k,2}
            \matr,
\end{align}
where $U_{k,1} \in \BBR^{n \times n_1}$ contains the first $n_1$ columns of $U_k$, and $U_{k,2} \in \BBR^{n \times n-n_1}$ contains the remaining $n - n_1$ columns of $U_k$, it follows that
\begin{align}
    P_{k+1}^{-1}
        =
            \matl{cc}
                U_{k+1,1} & U_{k+1,2}
            \matr
            \matl{cc}
                \Sigma_{k+1,1}  & 0 \\
                0               & \Sigma_{k+1,2}
            \matr
            \matl{c}
                U_{k+1,1}^\rmT \\ U_{k+1,2}^\rmT
            \matr,
    \label{eq:Pkp1inv_SVD}
\end{align}
where
\begin{align}
    U_{k+1,1} 
        &=
            U_{k,1} V_k
    \label{eq:U1kp1}
    , \\
    \Sigma_{k+1,1}
        &=
            D_k, \\
    U_{k+1,2} 
        &=
            U_{k,2}
    \label{eq:U2kp1}
    , \\
    \Sigma_{k+1,2}
        &=
            \lambda \Sigma_{k,2},
    \label{eq:Sigma2kp1}
\end{align}
where
$V_k \in \BBR^{n_1 \times n_1}$ contains the singular vectors of $\lambda \Sigma_{k,1} + \psi_{k,1}^\rmT \psi_{k,1} $ and $D_k \in \BBR^{n_1 \times n_1}$ is the diagonal matrix containing the corresponding singular values. 
It follows from \eqref{eq:U2kp1}, \eqref{eq:Sigma2kp1} that if, for all $k \ge 0$, $\psi_k$ is  given by \eqref{eq:psi_k_0} and $\lambda \in (0,1)$, then the last $n-n_1$ singular vectors of $P_k^{-1}$ do not change and the corresponding singular values of $P_k^{-1}$ decrease to zero geometrically.
It thus follows from Proposition \ref{prop:Pkinv_bounds} that $\SeqPhi$ is not persistently exciting. 
Furthermore, since $P_k$ and $P_k^{-1}$ have the same singular vectors and the singular values of $P_k$ are the reciprocals of the singular values of $P_k^{-1}$, it follows that the last $n-n_1$ singular values of $P_k$ diverge.

The next example considers the case where there exists a proper subspace $\SSS\subset\BBR^n$ such that, for all $k\ge0,$ $\SR(\phi_k^\rmT)\subseteq\SSS$.
Hence, $\SeqPhi$ is not persistently exciting.
In this case, for all $k\ge0,$ the information-rich subspace $\SI_k$ is a proper subspace of $\BBR^n,$ and the singular values of $P_k^{-1}$ corresponding to the singular vectors in the orthogonal complement of $\SI_k$ converge to zero.

\begin{exam}
    \label{exam:Persistency_InformationContent}
    \textit{Lack of persistent excitation and the information-rich subspace.}
    Consider the regressor $\phi_k$ given by \eqref{eq:Reg_IIR} used in Example \ref{exam:persistency_conditionNumber}. 
    Recall that $\SeqPhi$ is not persistently exciting. %
    Let $P_0 = I_{10}$.
    Figure \ref{fig:CSM_forgetting_Persistency_InformationContent} shows the information content 
    $| \psi_{k,(i)} | $ for several values of $\lambda$ along with the singular values of the corresponding $P_k^{-1}$.
    Note that the information-rich subspace is six dimensional due to the presence of three harmonics in $u_k$ as shown by six relatively large components of $\psi_k$ and, in the case where $\lambda<1$, the singular values that correspond to the singular vectors not in the information-rich subspace converge to zero in machine precision. 
    \EndExample
\end{exam}

\begin{figure}[h!]
    \centering
    \includegraphics[width=1 \textwidth]{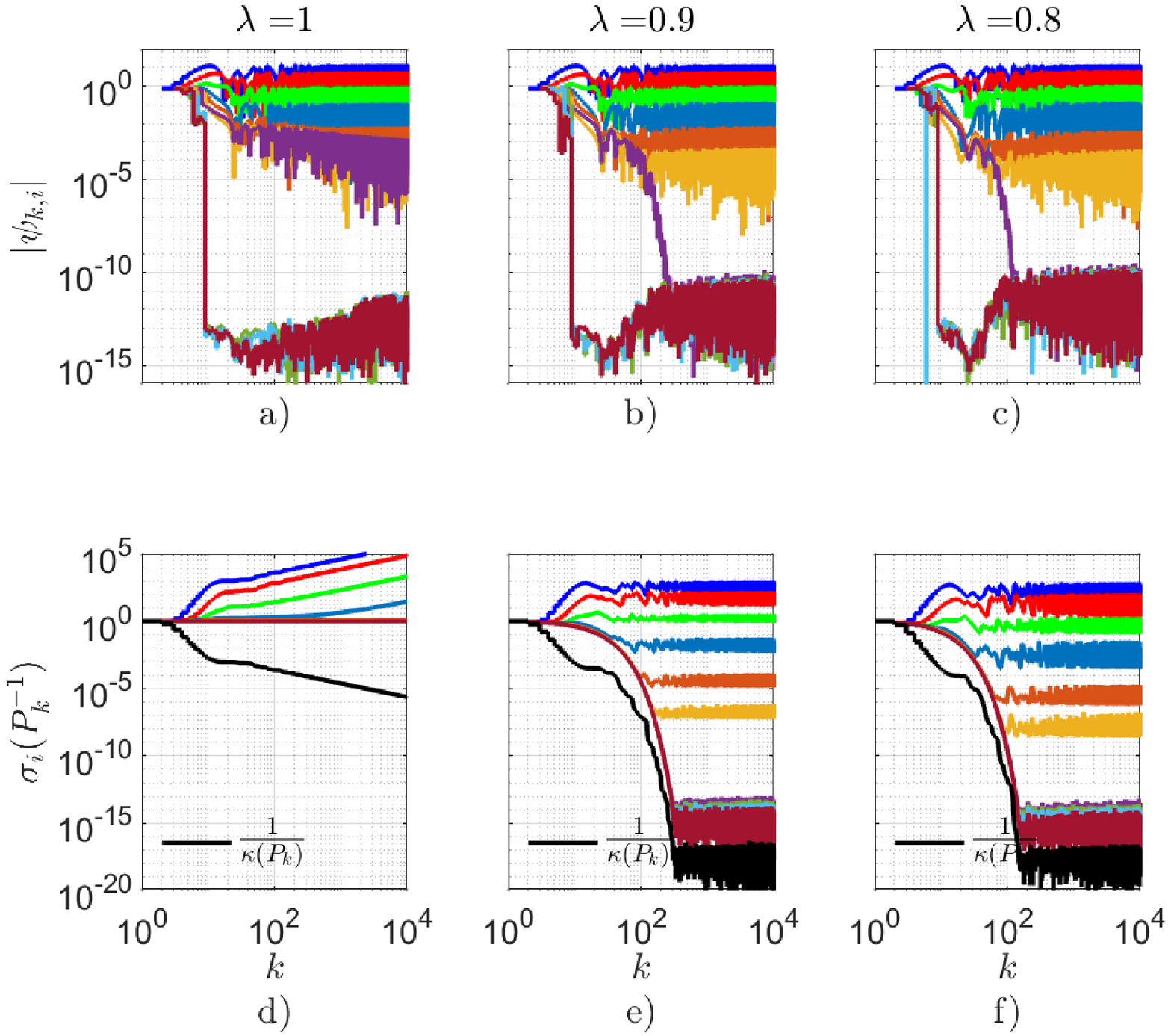}
    \caption{
    Example \ref{exam:Persistency_InformationContent}. 
    Relation between $P_k$ and the information content $\psi_k$.
    a), b), and c) show the information content $ {\rm col}_i (\psi_{k}) $ for several values of $\lambda$.
    Note that, in each case, the information-rich subspace is six dimensional due to the presence of three harmonics in $u_k$. 
    d), e), and (f) show the singular values of $P_k^{-1}$ for several values of $\lambda$.
    The inverse of the condition number of $P_k$ is shown in black. 
    Note that, for $\lambda<1$, the singular values of $P_k^{-1}$ corresponding to the singular vectors in the orthogonal complement of the information-rich subspace converge to zero.
    }
    \label{fig:CSM_forgetting_Persistency_InformationContent}
\end{figure}

%\clearpage
\section{Variable-Direction forgetting}

Examples \ref{exam:PE_Pk_bounds_2},  \ref{exam:persistency_conditionNumber}, \ref{exam:ScalarEstimation}, \ref{exam:LoP_n2}, and \ref{exam:LackofPE_theta_k} show that some of the singular values of $P_k^{-1}$ converge to zero in the case where $\phi_k$ is not persistently exciting.
To address this situation, \eqref{eq:Pk_recursive} is modified by replacing the scalar forgetting factor $\lambda$ by a data-dependent forgetting matrix $\Lambda_k$.
Similar modifications are discussed in ``Toward Matrix Forgetting''.
In particular, $P_{k+1}^{-1}$ is redefined as
%
%
%
%
%Consider the recursive relation
\begin{align}
    P_{k+1}^{-1}
        =
            \Lambda_k  P_{k}^{-1} \Lambda_k 
            +
            \phi_k ^{\rmT}\phi_k,
    \label{eq:Atheta_TF}
\end{align}
where $\Lambda_k$ is a positive-definite (and thus symmetric) matrix constructed below. 
Note that, for all $k\ge0,$  $P_{k+1}^{-1}$ given by \eqref{eq:Atheta_TF} is  positive definite. 
Using the singular value decomposition, \eqref{eq:Atheta_TF} can be written as
\begin{align}
    P_{k+1}^{-1}
        =
            \Lambda_k 
            U_{k} \Sigma_{k} U_{k}^\rmT
            \Lambda_k 
            +
            U_{k} 
            \psi_k^\rmT \psi_k
            U_{k}^\rmT,
    \label{eq:Atheta_TF_SVD}
\end{align}
where $U_{k}$, $\Sigma_{k}$, and $\psi_{k}$ are as defined in the previous section. 

The objective is to apply forgetting to only those singular values of $P_k^{-1}$ that correspond to the singular vectors in the information-rich subspace, that is, forgetting is restricted to the subspace of $P_k^{-1}$ where sufficient new information is provided by $\phi_k$.
Specifically, forgetting is applied to those information directions where the information content is greater than $\varepsilon > 0$, where $\varepsilon$ should be selected to be larger than the noise to signal ratio or larger than the machine zero, if no noise is present.
To do so, \eqref{eq:Atheta_TF_SVD} is written as 
\begin{align}
    P_{k+1}^{-1}
        =
            U_{k} \overline \Lambda_k 
            \Sigma_{k} 
            \overline \Lambda_k U_{k}^\rmT
            +
            U_{k} 
            \psi_k^\rmT \psi_k
            U_{k}^\rmT,
    \label{eq:Atheta_TF_SVD_flipped}
\end{align}
where $\overline \Lambda_k$ is a diagonal matrix whose diagonal entries are either $\sqrt \lambda$ or $1$.
In particular, 
\begin{align}
    \overline \Lambda_k(i,i)
        \isdef 
            \begin{cases}
                \sqrt \lambda,    & \| {\rm col}_i (\psi_{k}) \| > \varepsilon, \\
                1,    & \rm{otherwise},
            \end{cases}
    \label{eq:LambdaBar_def}
\end{align}
where ${\rm col}_i (\psi_{k})$ is the $i$th column of $\psi_k$ and
$\lambda \in (0,1]$.
Note that, it follows from \eqref{eq:Atheta_TF_SVD_flipped} and \eqref{eq:LambdaBar_def} that $P_{k+1}\inv$ is positive definite. 
Next, it follows from \eqref{eq:Atheta_TF_SVD} and \eqref{eq:Atheta_TF_SVD_flipped} that 
\begin{align}
    \Lambda_k 
        =
            U_{k} \overline \Lambda_k U_{k}  ^\rmT,
    \label{eq:Lambda_k_def}
\end{align}
which is positive definite.
Note that
\begin{align}
    \Lambda_k^{-1}
        &=
            U_{k} \overline \Lambda_k^{-1} U_{k}  ^\rmT.
    \label{eq:LambdaInv_def}
\end{align}

The next result provides a recursive formula to update $P_{k+1}$ given by \eqref{eq:Atheta_TF}. 

\begin{prop}
    \label{prop:Pk_MIL_VDF}
    Let $\lambda \in (0,1]$,
    $\varepsilon > 0$, let $(P_k)_{k=0}^\infty$ be a sequence of $n\times n$ positive-definite matrices, and let $U_k \in \BBR^{n \times n}$ be an orthonormal matrix whose columns are the singular vectors of $P_k.$ 
    Furthermore, let $\psi_k \in \BBR^{p \times n}$ be given by \eqref{eq:psi_k_definition}, let
    $\overline \Lambda_k$ be given by \eqref{eq:LambdaBar_def}, and let
    $\Lambda_k$ be given by \eqref{eq:Lambda_k_def}.
    Then, for all $k\ge0,$ $(P_k)_{k=0}^\infty$ satisfies \eqref{eq:Atheta_TF} if and only if, for all $k\ge0,$
    $(P_k)_{k=0}^\infty$ satisfies
    \begin{align}
        % \\
        P_{k+1}
            &=
                \overline P_{k} -
                \overline P_{k} \phi_k
                (I_p+\phi_k^\rmT \overline P_{k} \phi_k)^{-1}
                \phi_k^\rmT \overline P_{k},
        \label{eq:P_k_TF_def}
    \end{align}
    where
    \begin{align}
        \overline P_{k}
            &=
                \Lambda_k^{-1} P_{k} \Lambda_k^{-1}.
        \label{eq:Pbar_km1_def}
    \end{align}
\end{prop}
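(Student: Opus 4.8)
The plan is to derive \eqref{eq:P_k_TF_def} from \eqref{eq:Atheta_TF} by a single application of the matrix-inversion lemma, exactly as in the proof of Proposition \ref{prop:Pk_MIL}. The only structural difference from that earlier result is that the forgetting factor now enters as the congruence $\Lambda_k P_k^{-1}\Lambda_k$ rather than as the scalar multiple $\lambda P_k^{-1}$, so the bulk of the work is to identify the correct ``$A^{-1}$'' term in the inversion lemma.

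To prove necessity, I would start from \eqref{eq:Atheta_TF} and invert both sides. Because $\Lambda_k = U_k \overline\Lambda_k U_k^\rmT$ with $U_k$ orthonormal and $\overline\Lambda_k$ diagonal with positive diagonal entries (by \eqref{eq:LambdaBar_def} and \eqref{eq:Lambda_k_def}), the matrix $\Lambda_k$ is symmetric positive definite and hence invertible, with $\Lambda_k^{-1}=U_k\overline\Lambda_k^{-1}U_k^\rmT$ as in \eqref{eq:LambdaInv_def}. Consequently $\Lambda_k P_k^{-1}\Lambda_k$ is positive definite and
\[
    (\Lambda_k P_k^{-1}\Lambda_k)^{-1} = \Lambda_k^{-1} P_k \Lambda_k^{-1} = \overline P_k ,
\]
which recovers the matrix $\overline P_k$ of \eqref{eq:Pbar_km1_def}. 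Applying the matrix-inversion lemma to \eqref{eq:Atheta_TF} with $A=\Lambda_k P_k^{-1}\Lambda_k$, $U=\phi_k^\rmT$, $C=I_p$, and $V=\phi_k$, so that $A^{-1}=\overline P_k$ and $C^{-1}+VA^{-1}U = I_p + \phi_k \overline P_k \phi_k^\rmT$, then yields \eqref{eq:P_k_TF_def} directly. For sufficiency I would simply reverse these steps: beginning from \eqref{eq:P_k_TF_def}, the matrix-inversion lemma applied in the opposite direction gives $P_{k+1}^{-1} = \overline P_k^{-1} + \phi_k^\rmT\phi_k = \Lambda_k P_k^{-1}\Lambda_k + \phi_k^\rmT\phi_k$, which is \eqref{eq:Atheta_TF}.

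I do not expect a genuine obstacle, since the algebra is the same computation already carried out for Proposition \ref{prop:Pk_MIL}. The only points requiring care are, first, the bookkeeping of symmetry and positive-definiteness of $\Lambda_k$, which is needed both to justify inverting the congruence $\Lambda_k P_k^{-1}\Lambda_k$ and to guarantee that every inverse appearing in the argument exists, and second, the correct identification of $\overline P_k$ as $(\Lambda_k P_k^{-1}\Lambda_k)^{-1}=\Lambda_k^{-1}P_k\Lambda_k^{-1}$ (note the un-inverted $P_k$), rather than the tempting but incorrect $\Lambda_k^{-1}P_k^{-1}\Lambda_k^{-1}$. A final consistency check on the dimensions of the central factor $\bigl(I_p+\phi_k\overline P_k\phi_k^\rmT\bigr)^{-1}\in\BBR^{p\times p}$ confirms that the update is well posed.
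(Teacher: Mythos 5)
Your proposal is correct and takes essentially the same route as the paper's own proof: a single application of the matrix-inversion lemma to \eqref{eq:Atheta_TF}, identifying $(\Lambda_k P_k^{-1}\Lambda_k)^{-1}=\Lambda_k^{-1}P_k\Lambda_k^{-1}=\overline P_k$, with sufficiency obtained by reversing the steps. Your explicit justification that $\Lambda_k$ is symmetric positive definite, and your dimension check yielding $\overline P_k\phi_k^\rmT(I_p+\phi_k\overline P_k\phi_k^\rmT)^{-1}\phi_k\overline P_k$ (the printed \eqref{eq:P_k_TF_def} swaps these transposes, a typo also visible against the middle line of the paper's derivation), only tighten the same argument.
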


\begin{proof}
    To prove necessity, it follows from \eqref{eq:Atheta_TF} and matrix-inversion lemma, that
    \begin{align}
        P_{k+1}
            % &= 
            %     (P_{k+1}\inv)\inv 
            % \nn \\
            &=
                (
                \Lambda_k  P_{k}^{-1} \Lambda_k 
                + 
                \phi_{k}^\rmT \phi_{k}
                )\inv
            \nn \\
            &=
                (\Lambda_k  P_{k}^{-1} \Lambda_k )\inv-
                (\Lambda_k  P_{k}^{-1} \Lambda_k )\inv \phi_k^\rmT
                [I_p + \phi_k (\Lambda_k  P_{k}^{-1} \Lambda_k )\inv \phi_k^\rmT ]\inv
                \phi_k (\Lambda_k  P_{k}^{-1} \Lambda_k)\inv
            \nn \\
            &=
                \overline P_{k} -
                \overline P_{k} \phi_k
                (I_p+\phi_k^\rmT \overline P_{k} \phi_k)^{-1}
                \phi_k^\rmT \overline P_{k},
            \nn
    \end{align}
    where $\overline{P}_k$ is given by \eqref{eq:Pbar_km1_def}. 
    Reversing these steps proves sufficiency.   
\end{proof}

The modified update \eqref{eq:Atheta_TF} is shown to be optimal for a specific cost function in ``A Modified Quadratic Cost Function Supporting Variable-Direction RLS''.

Next, the matrix-forgetting scheme \eqref{eq:Atheta_TF} is shown to prevent the singular values of $P_k$ from diverging.    % in the case where $\phi_k$ is not persistently exciting. 
Consider the case where, for all $k\ge0$,
\begin{align}
    \psi_k
        =
            \matl{cc}
                \psi_{k,1} &
                0
            \matr,
    \label{eq:psi_k_NP}
\end{align}
where $\psi_{k,1} \in \BBR^{p \times n_1}$, 
that is, the information-rich subspace is spanned by the first $n_1$ columns of $U_k$.
It thus follows from \eqref{eq:Atheta_TF_SVD_flipped} and \eqref{eq:psi_k_NP} that $P_{k+1}^{-1}$ is given by
\begin{align}
    P_{k+1}^{-1}
        =
            U_{k} 
            \matl{cc}
                \lambda \Sigma_{k,1} +\psi_{k,1}^\rmT \psi_{k,1}  & 0 \\
                0               & \Sigma_{k,2}
            \matr
            U_{k}^\rmT.
    \label{eq:Atheta_TF_SVD_Analysis}
\end{align}
It follows from the $(2,2)$ block of \eqref{eq:Atheta_TF_SVD_Analysis} that the last $n-n_1$ information directions and the corresponding singular values are not affected by $\phi_k$.
Furthermore, if $n_1 = n$, that is, new information is present in $\phi_k$  along every information direction, then forgetting is applied to all of the singular values of $P_{k}^{-1}$, and thus variable-direction forgetting specializes to  uniform-direction forgetting, that is, RLS with the update for $P_k$ given by \eqref{eq:Pk_recursive}.

The next result shows that, as in the case of uniform-direction forgetting, $z_k$ converges to zero with variable-direction forgetting for every choice of $\varepsilon>0$, whether or not $\SeqPhi$ is persistently exciting. 

\begin{prop}
    \label{prop:z_converges_VDF}
    For all $k\ge0$, let $\phi_k \in \BBR^{p \times n}$ and $y_k \in \BBR^p$, let $R\in\BBR^{n\times n}$ be positive definite, and let $P_0 = R^{-1}$, $\theta_0 \in \BBR^n$, and  $\lambda \in (0,1]$.
    Furthermore, for all $k\ge0,$ let $P_k$ and $\theta_k$ be given by \eqref{eq:Atheta_TF} and \eqref{eq:thetaUpdate}, respectively.
    Then, 
    \begin{align}
        \lim_{k \to \infty} z_k = 0.
        \label{eq:z_lim_VDF}
    \end{align}
\end{prop}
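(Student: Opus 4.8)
The plan is to reproduce the Lyapunov argument in the proof of Proposition~\ref{prop:z_converges}, replacing the scalar forgetting factor $\lambda$ by the forgetting matrix $\Lambda_k$. I would set $V_k \isdef \tilde\theta_k^\rmT P_k^{-1} \tilde\theta_k \ge 0$ and note that $z_k = \phi_k \tilde\theta_k$. The parameter-error recursion \eqref{eq:theta_error1}, $\tilde\theta_{k+1} = (I_n - P_{k+1}\phi_k^\rmT\phi_k)\tilde\theta_k$, remains valid here, since it follows from \eqref{eq:thetaUpdate} alone and does not depend on how $P_{k+1}$ is generated.

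First I would expand $V_{k+1}-V_k$ exactly as in \eqref{eq:deltaV_le_zero}: substituting \eqref{eq:theta_error1} yields
\begin{align}
    V_{k+1} - V_k
        =
            \tilde\theta_k^\rmT
            (
                P_{k+1}^{-1} - 2\phi_k^\rmT\phi_k + \phi_k^\rmT\phi_k P_{k+1}\phi_k^\rmT\phi_k - P_k^{-1}
            )
            \tilde\theta_k. \nn
\end{align}
In place of \eqref{eq:Pk_recursive} I would now insert the variable-direction recursion \eqref{eq:Atheta_TF}, $P_{k+1}^{-1} = \Lambda_k P_k^{-1}\Lambda_k + \phi_k^\rmT\phi_k$, which cancels one copy of $\phi_k^\rmT\phi_k$ and, after regrouping and using $z_k = \phi_k\tilde\theta_k$, gives
\begin{align}
    V_{k+1} - V_k
        =
            -\tilde\theta_k^\rmT
            (
                P_k^{-1} - \Lambda_k P_k^{-1}\Lambda_k
            )
            \tilde\theta_k
            -
            z_k^\rmT
            (
                I_p - \phi_k P_{k+1}\phi_k^\rmT
            )
            z_k. \nn
\end{align}

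It then remains to show that both terms are nonnegative. For the forgetting term, the decisive fact, and the one genuinely new feature relative to the scalar proof, is that $\Lambda_k$ and $P_k^{-1}$ are simultaneously diagonalized by $U_k$. Writing $P_k^{-1} = U_k \Sigma_k U_k^\rmT$ and $\Lambda_k = U_k \overline\Lambda_k U_k^\rmT$ from \eqref{eq:Lambda_k_def}, I get $P_k^{-1} - \Lambda_k P_k^{-1}\Lambda_k = U_k(I_n - \overline\Lambda_k^2)\Sigma_k U_k^\rmT$; since the diagonal entries of $\overline\Lambda_k$ are $\sqrt\lambda$ or $1$ by \eqref{eq:LambdaBar_def}, we have $I_n - \overline\Lambda_k^2 \ge 0$, and because this factor and $\Sigma_k$ are both diagonal and nonnegative, their product is positive semidefinite. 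For the innovation term, I would use the matrix-inversion lemma exactly as in the proof of Proposition~\ref{prop:Pk_MIL_VDF} to obtain $I_p - \phi_k P_{k+1}\phi_k^\rmT = (I_p + \phi_k \overline P_k \phi_k^\rmT)^{-1}$, where $\overline P_k = \Lambda_k^{-1} P_k \Lambda_k^{-1}$ is positive definite; hence this coefficient is positive definite.

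Finally, as in Proposition~\ref{prop:z_converges}, the two nonnegative terms make $(V_k)_{k=0}^\infty$ nonincreasing and bounded below by $0$, hence convergent, so $V_{k+1}-V_k \to 0$. Since the two nonnegative contributions sum to $-(V_{k+1}-V_k) \to 0$, each tends to $0$; in particular $z_k^\rmT(I_p - \phi_k P_{k+1}\phi_k^\rmT)z_k \to 0$, and positive definiteness of $I_p - \phi_k P_{k+1}\phi_k^\rmT$ then forces $z_k \to 0$, exactly as in the scalar case. I expect the only real obstacle to be establishing the sign of the matrix forgetting term, that is, verifying the simultaneous diagonalization and $\overline\Lambda_k \le I_n$; the remainder is a direct transcription of the argument for Proposition~\ref{prop:z_converges}.
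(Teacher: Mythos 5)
Your proposal is correct and is essentially the paper's own argument: the paper likewise uses the Lyapunov function $V_k = \tilde\theta_k^\rmT P_k^{-1}\tilde\theta_k$, establishes $\Lambda_k P_k^{-1}\Lambda_k \le P_k^{-1}$ via the simultaneous diagonalization $P_k^{-1}=U_k\Sigma_k U_k^\rmT$, $\Lambda_k = U_k\overline\Lambda_k U_k^\rmT$, and arrives at exactly your decomposition $V_{k+1}-V_k = -\bigl[\tilde\theta_k^\rmT(P_k^{-1}-\Lambda_k P_k^{-1}\Lambda_k)\tilde\theta_k + z_k^\rmT(I_p+\phi_k\overline P_k\phi_k^\rmT)^{-1}z_k\bigr]$ before concluding $z_k\to0$ from monotone convergence of $V_k$. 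The only difference is cosmetic: you expand via \eqref{eq:theta_error1} and then substitute \eqref{eq:Atheta_TF}, while the paper starts from the VDF analogue of \eqref{eq:theta_error2} and substitutes the update of Proposition \ref{prop:Pk_MIL_VDF}; both routes yield the same two nonnegative terms.
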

\begin{proof}  
    Using \eqref{eq:LambdaBar_def}, \eqref{eq:Lambda_k_def}, and $P_k\inv = U_k \Sigma_k U_k^\rmT$, it follows that, for all $k\ge 0$,
    \begin{align}
        \Lambda_k P_k \inv \Lambda_k
            &=
                U_{k} \overline \Lambda_k 
                \Sigma_k 
                \overline \Lambda_k U_{k}  ^\rmT 
            \le
                U_{k} 
                \Sigma_k 
                U_{k}  ^\rmT 
            =
                P_k\inv.
    \end{align}
    For all $k\ge0$, note that $z_k = \phi_k \tilde \theta_k,$ and define $V_k \isdef \tilde \theta_k^\rmT P_k^{-1} \tilde \theta_k$.
    Note that, for all $k\ge 0$ and $\tilde \theta_k \in \BBR^n$, $V_k \ge 0$.
    Furthermore, for all $k\ge0,$
    \begin{align}
        V_{k+1} - V_k
            &=
                \tilde \theta_{k+1}^\rmT P_{k+1}^{-1} \tilde \theta_{k+1} -
                \tilde \theta_k^\rmT P_k^{-1} \tilde \theta_k 
            \nn \\
            &=
                \tilde \theta_k^\rmT \Lambda_k P_k^{-1} \Lambda_k 
                P_{k+1} 
                \Lambda_k P_k^{-1} \Lambda_k \tilde \theta_{k} -
                \tilde \theta_k^\rmT P_k^{-1} \tilde \theta_k 
            \nn \\
            &=
                \tilde \theta_k^\rmT 
                [
                    \Lambda_k P_k^{-1} \Lambda_k 
                    P_{k+1} 
                    \Lambda_k P_k^{-1} \Lambda_k  -
                    P_k^{-1} 
                ]
                \tilde \theta_k 
            \nn \\
            &=
                \tilde \theta_k^\rmT 
                [
                    \Lambda_k P_k^{-1} 
                    (
                        P_{k}  -
                        P_{k} \Lambda_k^{-1} \phi_k
                        (I_p+\phi_k^\rmT \overline P_{k} \phi_k)^{-1}
                        \phi_k^\rmT \Lambda_k^{-1} P_{k} 
                    ) 
                    P_k^{-1} \Lambda_k  -
                    P_k^{-1} 
                ]
                \tilde \theta_k 
            \nn \\
            &=
                \tilde \theta_k^\rmT 
                [
                        \Lambda_k P_k^{-1} \Lambda_k  -
                        \phi_k
                        (I_p+\phi_k^\rmT \overline P_{k} \phi_k)^{-1}
                        \phi_k^\rmT
                    -
                    P_k^{-1} 
                ]
                \tilde \theta_k 
            \nn \\
            &=
                -[
                \tilde \theta_k^\rmT 
                (P_k^{-1} - \Lambda_k P_k^{-1} \Lambda_k )
                \tilde \theta_k +
                z_k
                (I_p+\phi_k^\rmT \overline P_{k} \phi_k)^{-1}
                z_k 
                ]
            \nn \\
            &\le 
                0.
            \nn
    \end{align}
    Note that, since $(V_k)_{k=1}^\infty$ is a nonnegative, nonincreasing sequence, it converges to a nonnegative number.
    Hence, $\lim_{k\to\infty} (V_{k+1} - V_k)=0,$ which implies that $$\lim_{k\to\infty} [ \tilde \theta_k^\rmT 
                (P_k^{-1} - \Lambda_k P_k^{-1} \Lambda_k )
                \tilde \theta_k +
                z_k
                (I_p+\phi_k^\rmT \overline P_{k} \phi_k)^{-1}
                z_k  ] = 0.$$
    Since, for all $k\ge 0$, $P_k^{-1} - \Lambda_k P_k^{-1} \Lambda_k \ge 0$ and
    $(I_p+\phi_k^\rmT \overline P_{k} \phi_k)^{-1}>0$,
    it follows that $\lim_{k \to \infty} z_k = 0.$
\end{proof}

The next result shows that $P_k$ is bounded from above with variable-direction forgetting for every choice of $\varepsilon>0$ in the case where $\SeqPhi$ is persistently exciting.

\begin{prop}
\label{prop:Pkinv_bounds_VDF}
Assume that $(\phi_k)_{k=0}^\infty $ is persistently exciting,
let $N,\alpha,\beta$ be given by Definition \ref{def:persistent_Exc}, let $R\in\BBR^{n\times n}$ be positive definite, define $P_{0} \isdef R^{-1}$, let $\lambda \in (0,1)$, and, for all $k\ge0,$ let $P_k$ be given by \eqref{eq:Atheta_TF}.
Then, for all $k\ge N+1,$ 
    \begin{align}
        \frac{\lambda^N(1-\lambda) \alpha} 
             {1 - \lambda^{N+1} }
             I_n
            \le
                P_k^{-1}.
        \label{eq:Pkinv_bounds_VDF}
    \end{align}
\end{prop}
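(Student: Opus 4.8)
The plan is to reduce the claim to the lower-bound argument already carried out in the proof of Proposition \ref{prop:Pkinv_bounds}. That argument used only two consequences of the uniform-forgetting recursion \eqref{eq:Pk_recursive}, namely $\lambda P_i^{-1}\le P_{i+1}^{-1}$ and $\phi_i^\rmT\phi_i\le P_{i+1}^{-1}$. Since the stated bound \eqref{eq:Pkinv_bounds_VDF} coincides exactly with the lower bound in \eqref{eq:Pkinv_bounds}, it suffices to re-establish these two inequalities from the variable-direction recursion \eqref{eq:Atheta_TF}.

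First I would prove the key matrix inequality $\lambda P_k^{-1}\le\Lambda_k P_k^{-1}\Lambda_k$. Writing $P_k^{-1}=U_k\Sigma_k U_k^\rmT$ and $\Lambda_k=U_k\overline\Lambda_k U_k^\rmT$ as in \eqref{eq:Lambda_k_def}, and using that the diagonal matrices $\overline\Lambda_k$ and $\Sigma_k$ commute, I obtain $\Lambda_k P_k^{-1}\Lambda_k-\lambda P_k^{-1}=U_k\Sigma_k(\overline\Lambda_k^2-\lambda I_n)U_k^\rmT$. By \eqref{eq:LambdaBar_def} each diagonal entry of $\overline\Lambda_k$ equals $\sqrt\lambda$ or $1$, so $\overline\Lambda_k^2\ge\lambda I_n$; together with $\Sigma_k>0$ this makes the difference positive semidefinite. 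This is the single new ingredient, the companion upper bound $\Lambda_k P_k^{-1}\Lambda_k\le P_k^{-1}$ having already appeared in the proof of Proposition \ref{prop:z_converges_VDF}.

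Given this, \eqref{eq:Atheta_TF} and $\phi_k^\rmT\phi_k\ge0$ immediately give $\lambda P_k^{-1}\le\Lambda_k P_k^{-1}\Lambda_k\le P_{k+1}^{-1}$ and $\phi_k^\rmT\phi_k\le P_{k+1}^{-1}$, the two inequalities required. Iterating the first yields $\lambda^j P_i^{-1}\le P_{i+j}^{-1}$ for all $i,j\ge0$, and then for $k\ge N+1$ the same computation as in Proposition \ref{prop:Pkinv_bounds} applies:
\begin{align}
    \alpha I_n
        \le
            \sum_{i=k-N-1}^{k-1}\phi_i^\rmT\phi_i
        \le
            \sum_{i=k-N}^{k}P_i^{-1}
        \le
            (\lambda^{-N}+\cdots+1)P_k^{-1}
        =
            \frac{1-\lambda^{N+1}}{\lambda^N(1-\lambda)}P_k^{-1},
    \nn
\end{align}
where the first inequality is persistent excitation applied to $F_{k-N-1,k-1}$. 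Rearranging gives \eqref{eq:Pkinv_bounds_VDF}.

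I expect the main obstacle to be only the eigenvalue inequality $\overline\Lambda_k^2\ge\lambda I_n$ and its packaging into $\lambda P_k^{-1}\le\Lambda_k P_k^{-1}\Lambda_k$; once this replaces the scalar step $\lambda P_k^{-1}\le P_{k+1}^{-1}$ of the uniform case, the remainder is a verbatim repetition. Note that no upper bound on $P_k^{-1}$ is asserted here, consistent with the fact that along information-poor directions \eqref{eq:Atheta_TF} applies no forgetting, so the window bound $F_{j,j+N}\le\beta I_n$ cannot be propagated to control $P_k^{-1}$ from above as in \eqref{eq:Pkinv_bounds}.
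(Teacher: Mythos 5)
Your proposal is correct and matches the paper's proof essentially step for step: the paper likewise establishes $\lambda P_k^{-1} = \lambda U_k \Sigma_k U_k^\rmT \le U_k \overline{\Lambda}_k \Sigma_k \overline{\Lambda}_k U_k^\rmT = \Lambda_k P_k^{-1} \Lambda_k \le P_{k+1}^{-1}$ together with $\phi_k^\rmT \phi_k \le P_{k+1}^{-1}$ from \eqref{eq:Atheta_TF}, and then runs the identical summation chain from the lower bound of Proposition \ref{prop:Pkinv_bounds}. Your packaging of the key inequality as $U_k \Sigma_k (\overline{\Lambda}_k^2 - \lambda I_n) U_k^\rmT \ge 0$ is just a slight rephrasing of the same eigenvalue observation, so there is nothing substantively different between the two arguments.
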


\begin{proof}
    It follows from \eqref{eq:Atheta_TF}, that, for all $k \ge 0,$
    $
        \Lambda_k P_k \inv \Lambda_k 
            \le
                P_{k+1}\inv
    $
    and 
    $\phi_k^\rmT \phi_k \le P_{k+1} \inv.$
    Next, using \eqref{eq:Lambda_k_def} and $P_k\inv = U_k \Sigma_k U_k^\rmT$, it follows that, for all $k \ge 0$,
    \begin{align}
        \lambda P_k\inv
            &=   
                \lambda U_{k} 
                \Sigma_k 
                U_{k}  ^\rmT 
            \le
                U_{k} \overline \Lambda_k 
                \Sigma_k 
                \overline \Lambda_k U_{k}  ^\rmT 
            =
                \Lambda_k P_k \inv \Lambda_k
            \le 
                P_{k+1}\inv. \nn
    \end{align}
    Finally, for all $k \ge N+1$,
    \begin{align}
        \alpha I_n
                &\le
                    \sum_{i=k-N-1}^{k-1}
                        \phi_i^\rmT  \phi_i\nn\\
            &\le 
                \sum_{i=k-N}^{k} P_{i}^{-1}\nn\\
                &\le
                ( \lambda^{-N} + \cdots + 1 ) P_{k}^{-1}\nn\\
            &=
                \dfrac{1-\lambda^{N+1}}{\lambda^N (1-\lambda)} P_{k}^{-1}, \nn 
    \end{align}
    which proves \eqref{eq:Pkinv_bounds_VDF}.
    % 
    %\EndProof
\end{proof}

The next two examples consider variable-direction forgetting in the case where $\SeqPhi$ is not persistently exciting.
In these examples, $P_k$ is bounded, 
$z_k$ converges to zero, and $\theta_k$ converges, although not to the true value $\theta.$

\begin{exam}
    \label{exam:Persistency_InformationContent_with_TF}
    % \textit{Bounds on the singular values of $P_k$ with variable-direction forgetting.}
    \textit{Variable-direction forgetting for a regressor lacking persistent excitation.}
    Reconsider Example \ref{exam:Persistency_InformationContent}.
    Let $P_0 = I_{10}$, and $P_k^{-1}$ be given by \eqref{eq:P_k_TF_def}, where $\varepsilon = 10^{-8}$. 
    Figure \ref{fig:CSM_forgetting_Persistency_InformationContent_Targeted} shows the information content $| {\rm col}_i (\psi_{k}) |$ and the singular values of the $P_k^{-1}$ for several values of $\lambda$.
    Note that the information-rich subspace is six dimensional due to the presence of three harmonics in $u_k$ as shown by six relatively large components of $\psi_k$ and the singular values that correspond to the singular vectors not in the information-rich subspace do not converge to zero. 
    \EndExample
\end{exam}

\begin{figure}[h!]
    \centering
    \includegraphics[width=1 \textwidth]{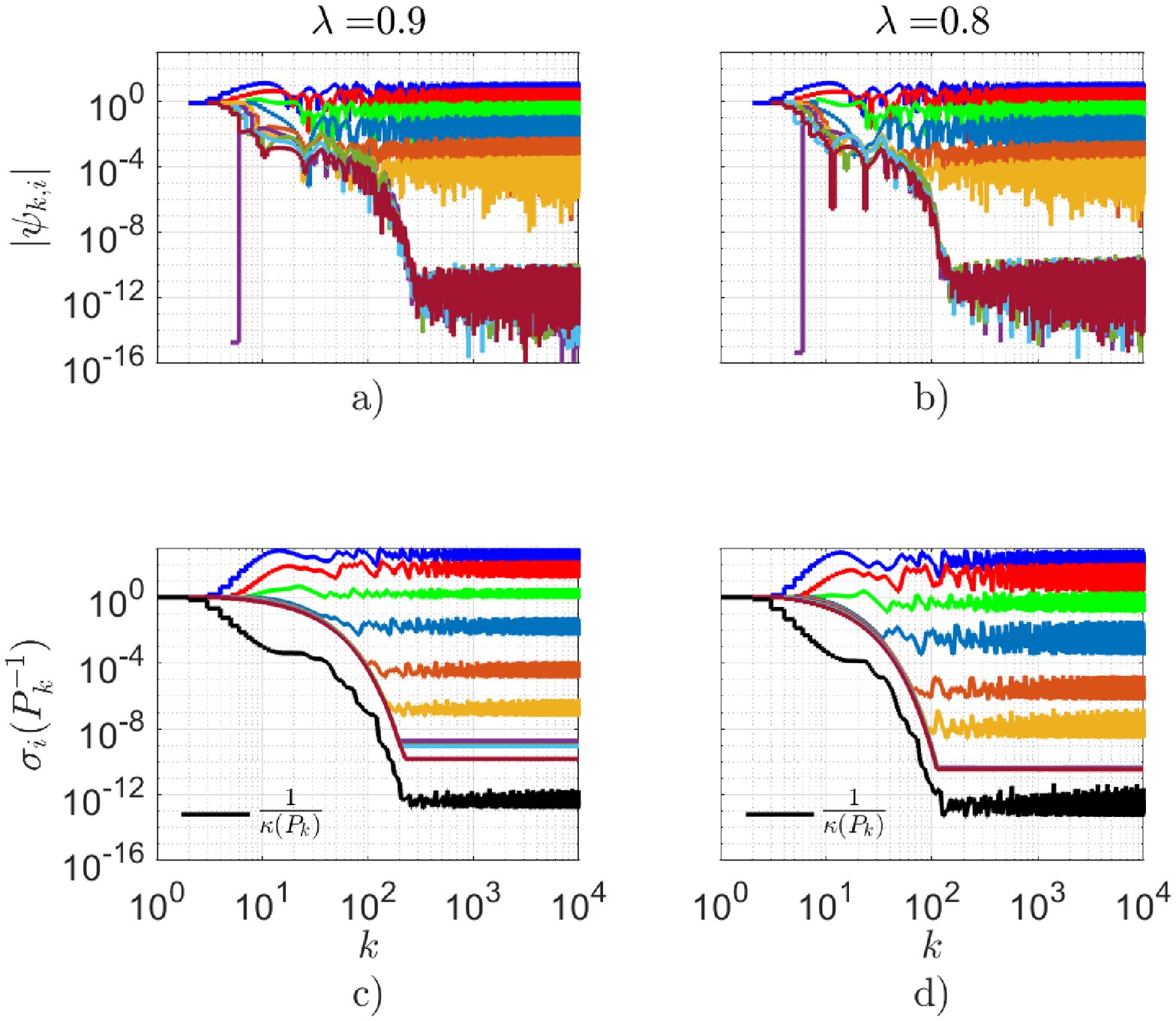}
    \caption{
    Example \ref{exam:Persistency_InformationContent_with_TF}. 
    Variable-direction forgetting for a regressor lacking persistent excitation.
    a) and b) show the information content $\| \psi_k \|$ for $\lambda = 0.9$ and $\lambda = 0.8$.
    c) and d) show the singular values of $P_k^{-1}$ for $\lambda = 0.9$ and $\lambda = 0.8$.
    The inverse of the condition number of $P_k$ is shown in black. 
    Note that, for $\lambda<1$, the singular values that correspond to the singular vectors not in the information-rich subspace do not converge to zero.
    }
    \label{fig:CSM_forgetting_Persistency_InformationContent_Targeted}
\end{figure}

\begin{exam}
    \label{exam:TF_effect_on_theta_k}
    \textit{Effect of variable-direction forgetting on $\theta_k$.}
    Reconsider Example \ref{exam:LackofPE_theta_k}.
    Let $P_0 = I_{10}$, and $P_k^{-1}$ be given by \eqref{eq:P_k_TF_def}, where $\varepsilon = 10^{-8}$. 
    Figure \ref{fig:CSM_forgetting_5thOrder_fit_Targeted} shows the predicted error $z_k $, the norm of the parameter error $\tilde \theta_k$, and 
    the singular values and the condition number of $P_k$.
    Note that the $\tilde \theta_k$ does not converge to zero and, unlike uniform-direction forgetting, all of the singular values of $P_k$ remain bounded and $\theta_k$ is bounded.  
    \EndExample
\end{exam}

\begin{figure}[h!]
    \centering
    \includegraphics[width=1 \textwidth]{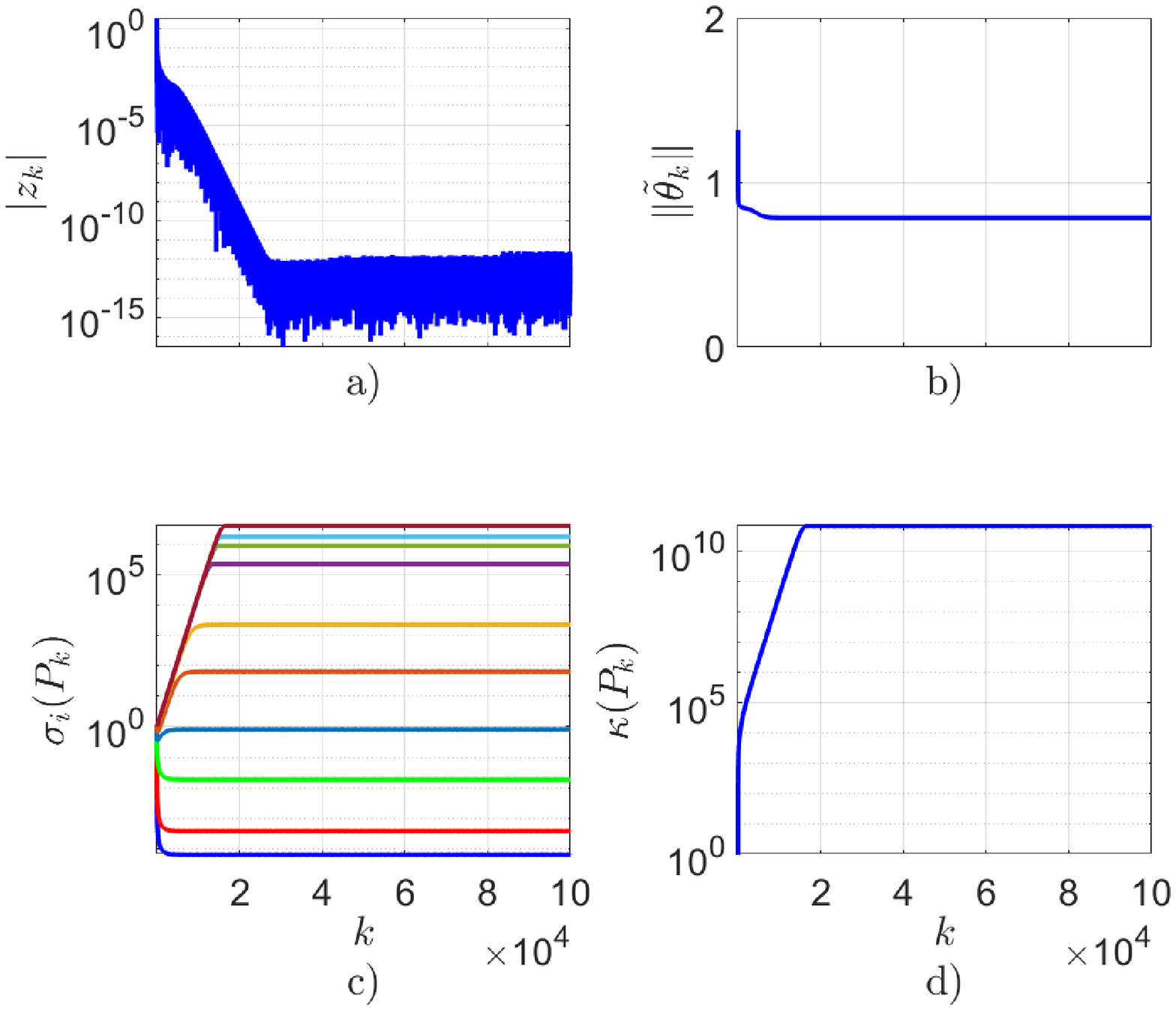}
    \caption{
    Example \ref{exam:TF_effect_on_theta_k}.
    Effect of variable-direction forgetting on $\theta_k$. 
    a) shows the predicted error $z_k $,  
    b) shows the norm of the parameter error $\tilde \theta_k$, 
    c) shows the singular values of $P_k$, and
    d) shows the condition number of $P_k$.
    Note that all of the singular values of $P_k$ remain bounded.
    }
    \label{fig:CSM_forgetting_5thOrder_fit_Targeted}
\end{figure}

%\clearpage

%\clearpage
\section{Concluding Remarks}

This tutorial article presented a self-contained exposition of uniform-direction  and variable-direction forgetting within the context of RLS.
It was shown that, in the case of persistent excitation without forgetting, the parameter estimates converge asymptotically, whereas, with forgetting, the parameter estimates converge geometrically.
Numerical examples were presented to illustrate this behavior.

In the case where forgetting is used but the excitation is not persistent, it was shown that forgetting is enforced in all information directions, whether or not new information is present along these directions. 
Consequently, the parameter estimates converge, but not necessarily to their true values; furthermore, the matrix $P_k$ diverges, leading to numerical instability.
This phenomenon was traced to the divergence of the singular values of $P_k$ corresponding to singular vectors that are orthogonal to the information-rich subspace.

In order to address this problem, a data-dependent forgetting matrix was constructed to restrict forgetting to the information-rich subspace. 
The RLS cost function that corresponds to this extension of RLS was presented.
Numerical examples showed that this variable-direction forgetting technique prevents $P_k$ from diverging under lack of persistent excitation.

Since RLS is fundamentally least squares optimization, its estimates are not consistent in the case of sensor noise \cite{eykhoff1974system}.
An open problem is thus to develop extensions of RLS that provide consistent parameter estimates in the presence of errors-in-variable noise arising in system identification problems \cite{soderstrom2018errors}.

\section*{Acknowledgments}

This research was partially supported by AFOSR under DDDAS grant FA9550-16-1-0071
(Dynamic Data-Driven Applications Systems \href{http://www.1dddas.org/}{http://www.1dddas.org/}).

%\clearpage 

\bibliographystyle{IEEEtran.bst}
\bibliography{recursive,references1}

\clearpage 
\section*{Sidebar: Summary}
Learning depends on the ability to acquire and assimilate new information.  This ability depends---somewhat counterintuitively---on the ability to forget.
In particular, effective forgetting requires the ability to recognize and utilize new information to order to update a system model.
This article is a tutorial on forgetting within the context of recursive least squares (RLS). 
To do this, RLS is first presented in its classical form, which employs  uniform-direction forgetting.
Next, examples are given to motivate the need for variable-direction forgetting, especially in cases where the excitation is not persistent. 
Some of these results are well known, whereas others complement the prior literature.
The goal is to provide a self-contained tutorial of the main ideas and techniques for students and researchers whose research may benefit from variable-direction forgetting.

\clearpage 
\setcounter{equation}{0}
\renewcommand{\theequation}{S\arabic{equation}}
\setcounter{lemma}{0}
\renewcommand{\thelem}{S\arabic{lem}}
\section*{Sidebar:  Three Useful Lemmas}
\begin{lem}
    \label{lemma:y_in_X}
    Let $X \in \BBR^{n \times p}$ and $y \in \BBR^{ n}$, and let $W \in \BBR^{p \times p}$ be positive definite.
    Then,
    \begin{align}
        (I_n + X W X^\rmT)^{-1} y
        \in
        \SR([X \ \ y]).
        \label{eq:y_in_X}
    \end{align}
\end{lem}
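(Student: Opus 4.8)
The plan is to exhibit the vector $z := (I_n + XWX^\rmT)^{-1}y$ explicitly as $y$ minus a vector lying in $\SR(X)$, which immediately places it in $\SR([X\ \ y])$. First I would record that $z$ is well defined: since $W$ is positive definite, $XWX^\rmT$ is positive semidefinite, so $I_n + XWX^\rmT \ge I_n$ is positive definite and hence invertible. This is the only preliminary check, and it is routine.

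The heart of the argument is a single rearrangement of the defining identity. Starting from $(I_n + XWX^\rmT)z = y$ and expanding, I would write $z + XWX^\rmT z = y$, and then move the quadratic term to the right-hand side to obtain $z = y - X(WX^\rmT z)$. Setting $w := WX^\rmT z \in \BBR^p$, this reads $z = y - Xw$, so $z$ is a linear combination of $y$ (with coefficient one) and the columns of $X$ (with coefficients given by $-w$). Therefore $z \in \SR([X\ \ y])$, which is exactly \eqref{eq:y_in_X}.

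I expect no genuine obstacle in this proof: the entire content is the algebraic step of isolating $z$ so that the inverse disappears, and the key insight is simply to recognize that $XWX^\rmT z$ already belongs to $\SR(X)$ regardless of what $z$ is. The verification that $I_n + XWX^\rmT$ is invertible is the only side condition, and it follows at once from positive definiteness of $W$. Thus the proof is short and self-contained, relying on nothing beyond elementary linear algebra.
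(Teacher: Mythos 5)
Your proof is correct, and it takes a genuinely different and more elementary route than the paper. You isolate the specific vector $z=(I_n+XWX^\rmT)^{-1}y$ and rearrange its defining equation into $z = y - X(WX^\rmT z)$, which exhibits the membership in $\SR([X\ \ y])$ directly. The paper instead proves a subspace identity: starting from $\SR([X\ \ y])$, it uses a column operation ($y \mapsto y + XWX^\rmT y$), right-multiplication by the nonsingular block-diagonal matrix ${\rm diag}(I_p+WX^\rmT X,\,1)$, and the push-through identity $X(I_p+WX^\rmT X)=(I_n+XWX^\rmT)X$ to conclude
\begin{align}
\SR([X\ \ y]) = (I_n+XWX^\rmT)\,\SR([X\ \ y]), \nonumber
\end{align}
from which the lemma follows by applying the inverse to $y$. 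The paper's argument thus establishes something slightly stronger---the whole subspace $\SR([X\ \ y])$ is invariant under $I_n+XWX^\rmT$ and hence under its inverse---at the cost of also needing nonsingularity of $I_p+WX^\rmT X$ and a longer chain of range manipulations. Your argument buys brevity and generality: it needs nothing about $W$ beyond whatever guarantees invertibility of $I_n+XWX^\rmT$ (positive definiteness being merely sufficient), and the membership statement it delivers is exactly what is used in the proof of Proposition \ref{prop:theta_in_RangePHI}, so it would serve the paper equally well.
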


\begin{proof}
    Note that
    \begin{align}
        y
            &\in
                \SR([X \ \ y]) 
            \nn \\
            &=
                \SR [X  \ \ y + X W X^\rmT y]
            \nn \\
            &=
                \SR
                \left(
                [X  \ \ (I_n + X W X^\rmT) y]
                \matl{cc}
                    I_p + W X^\rmT X & 0\\
                    0 & 1
                \matr
                \right) 
            \nn \\
            &=
                \SR([X(I_p +  W X^\rmT X) \ \ (I_n + X W X^\rmT) y])
            \nn \\
            &=
                \SR([(I_n + X W X^\rmT) X \ \ (I_n + X W X^\rmT) y])
            \nn \\
            &=
                (I_n + X W X^\rmT) \SR([ X \ \ y]), \nn
    \end{align}
    which implies \eqref{eq:y_in_X}.
    % 
    %\EndProof
    %
\end{proof}

\begin{lem}
\label{prop:IminusAlambda}
Let $A \in \BBR^{n \times n}$ be positive semidefinite, and let $\lambda >0$.
Then,
\begin{align}
    I_n - A (\lambda I_n + A)^{-1} > 0.
    \label{eq:IminusAlambda}
\end{align}
\end{lem}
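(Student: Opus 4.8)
The plan is to show that $I_n - A(\lambda I_n + A)^{-1}$ is positive definite by rewriting it as a single resolvent-type expression and then invoking the positive semidefiniteness of $A$ together with $\lambda > 0$. First I would factor out the common inverse: since $\lambda I_n + A$ is invertible (it is positive definite because $A$ is positive semidefinite and $\lambda > 0$), I can write
\begin{align}
    I_n - A(\lambda I_n + A)^{-1}
        &=
            (\lambda I_n + A)(\lambda I_n + A)^{-1} - A(\lambda I_n + A)^{-1}
        \nn \\
        &=
            \lambda (\lambda I_n + A)^{-1}.
    \nn
\end{align}
This collapses the problem to showing that $\lambda(\lambda I_n + A)^{-1}$ is positive definite.

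The key observation is that $A$ positive semidefinite and $\lambda > 0$ imply $\lambda I_n + A \geq \lambda I_n > 0$, so $\lambda I_n + A$ is positive definite, and hence its inverse $(\lambda I_n + A)^{-1}$ is positive definite as well. Multiplying a positive-definite matrix by the positive scalar $\lambda$ preserves positive definiteness, so $\lambda(\lambda I_n + A)^{-1} > 0$, which establishes \eqref{eq:IminusAlambda}. An equivalent and perhaps cleaner route would be to diagonalize $A = U \Lambda U^\rmT$ with $U$ orthogonal and $\Lambda$ diagonal with nonnegative entries $\mu_i \geq 0$; then $I_n - A(\lambda I_n + A)^{-1}$ is orthogonally similar to the diagonal matrix with entries $1 - \mu_i/(\lambda + \mu_i) = \lambda/(\lambda + \mu_i)$, each of which is strictly positive since $\lambda > 0$ and $\mu_i \geq 0$.

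There is no serious obstacle here; the only point requiring a moment of care is confirming that $\lambda I_n + A$ is genuinely invertible before manipulating its inverse, which follows immediately from $\lambda I_n + A \geq \lambda I_n > 0$. I would lead with the algebraic factorization, since it is the shortest and most transparent, and mention the spectral argument only if an explicit handle on the eigenvalues is desired elsewhere in the paper (for instance, it makes clear that the eigenvalues of the expression lie in $(0,1]$, which may connect to the positivity of $R_k$ used in the proof of Proposition~\ref{prop:z_converges}).
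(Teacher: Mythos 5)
Your proof is correct, but your lead argument differs from the paper's. The paper proves the lemma by diagonalization: it writes $A = S D S^\rmT$ with $S$ unitary and $D = {\rm diag}(d_1,\ldots,d_n)$, observes that $D(\lambda I_n + D)^{-1} = {\rm diag}\bigl(\tfrac{d_1}{\lambda+d_1},\ldots,\tfrac{d_n}{\lambda+d_n}\bigr) < I_n$ since each $d_i \ge 0$, and then conjugates back by $S$ --- this is precisely the spectral argument you relegate to a side remark. Your primary route, the factorization $I_n - A(\lambda I_n + A)^{-1} = \lambda(\lambda I_n + A)^{-1}$, is shorter and arguably cleaner: it reduces the claim to the elementary facts that $\lambda I_n + A \ge \lambda I_n > 0$ and that inversion and positive scaling preserve positive definiteness, and it makes the symmetry of the left-hand side transparent (which the paper's statement tacitly assumes). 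What the paper's diagonalization buys is uniformity and explicitness: the identical template is reused verbatim for the companion Lemma \ref{Lem:Lemma_A_identity} in the same sidebar (where the analogous one-line factorization is less immediate), and it exhibits the eigenvalues $\tfrac{\lambda}{\lambda+d_i} \in (0,1]$ explicitly, which is exactly the quantitative handle you note could be useful in connection with the positivity of $R_k$ in the proof of Proposition \ref{prop:z_converges}. Either proof would serve; yours is the more economical for this lemma in isolation.
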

\begin{proof}
    Write $A = S D S^\rmT$, where $D= {\rm diag}(d_1,\ldots,d_n)$ is diagonal and $S$ is unitary.
    For all $i \in \{1, \ldots, n \},$ $d_i\ge0,$ and thus $\tfrac{d_i}{\lambda+d_i} < 1$. 
    Hence,
    \begin{align}
        D(\lambda I_n + D)^{-1} 
            =  
                {\rm diag} \left( \tfrac{d_1}{\lambda+d_1}, \ldots , \tfrac{d_n}{\lambda+d_n} \right) 
            < I_n.
        \label{eq:D_ineq}
    \end{align}
    Pre-multiplying and post-multiplying \eqref{eq:D_ineq} by $S$ and $S^\rmT$, respectively, yields \eqref{eq:IminusAlambda}.
    %\EndProof
\end{proof}

\begin{lem}
\label{Lem:Lemma_A_identity}
Let $A \in \BBR^{n \times n}$ be positive semidefinite, and let $\lambda >0$.
Then,
\begin{align}
    I_n - 
    \dfrac{1}{\lambda}
    \left( A - A (\lambda I_n + A)^{-1} A \right) > 0.
    \label{eq:Lemma_A_identity}
\end{align}
\end{lem}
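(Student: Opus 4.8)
The plan is to reduce \eqref{eq:Lemma_A_identity} directly to the already-established Lemma \ref{prop:IminusAlambda} by means of a purely algebraic matrix identity, thereby avoiding any re-diagonalization of $A$. The key observation is that, since $\lambda > 0$ and $A$ is positive semidefinite, every eigenvalue of $\lambda I_n + A$ is at least $\lambda > 0$, so $\lambda I_n + A$ is positive definite and hence invertible, and $A$ commutes with $(\lambda I_n + A)^{-1}$.

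First I would establish the identity
\begin{align}
    A - A(\lambda I_n + A)^{-1} A
        =
            \lambda A (\lambda I_n + A)^{-1}. \nn
\end{align}
To do so, I would factor $A$ on the left and simplify the remaining factor using $I_n - (\lambda I_n + A)^{-1} A = (\lambda I_n + A)^{-1}[(\lambda I_n + A) - A] = \lambda (\lambda I_n + A)^{-1}$, so that $A - A(\lambda I_n + A)^{-1} A = A \cdot \lambda (\lambda I_n + A)^{-1}$. Dividing through by $\lambda$ then yields $\tfrac{1}{\lambda}(A - A(\lambda I_n + A)^{-1} A) = A(\lambda I_n + A)^{-1}$.

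Consequently, the left-hand side of \eqref{eq:Lemma_A_identity} equals $I_n - A(\lambda I_n + A)^{-1}$, which is precisely the quantity shown to be positive definite in \eqref{eq:IminusAlambda}; applying Lemma \ref{prop:IminusAlambda} then completes the argument. I do not expect a genuine obstacle here: the only point requiring care is the invertibility of $\lambda I_n + A$, which is guaranteed by $\lambda > 0$ together with $A \ge 0$. As an alternative route, one could instead diagonalize $A = S D S^\rmT$ as in the proof of Lemma \ref{prop:IminusAlambda} and compute the $i$th diagonal entry $\tfrac{1}{\lambda}(d_i - \tfrac{d_i^2}{\lambda + d_i}) = \tfrac{d_i}{\lambda + d_i} < 1$, reaching the same conclusion; but the identity-based route is shorter and reuses Lemma \ref{prop:IminusAlambda} verbatim.
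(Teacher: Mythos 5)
Your proposal is correct, but it takes a genuinely different route from the paper. The paper proves Lemma \ref{Lem:Lemma_A_identity} from scratch: it writes $A = S D S^\rmT$ with $D = {\rm diag}(d_1,\ldots,d_n)$, computes the diagonal entries $\tfrac{1}{\lambda}\bigl(d_i - \tfrac{d_i^2}{\lambda+d_i}\bigr) = \tfrac{d_i}{\lambda+d_i} < 1$, and conjugates back by $S$ --- exactly the ``alternative route'' you mention at the end. You instead prove the algebraic identity
\begin{align}
    A - A(\lambda I_n + A)^{-1} A
        =
            A\left[ I_n - (\lambda I_n + A)^{-1} A \right]
        =
            \lambda A (\lambda I_n + A)^{-1}, \nn
\end{align}
which is valid since $\lambda I_n + A$ is positive definite (hence invertible) and commutes with $A$, so that the left-hand side of \eqref{eq:Lemma_A_identity} is \emph{literally the same matrix} as the left-hand side of \eqref{eq:IminusAlambda}, and Lemma \ref{prop:IminusAlambda} applies verbatim. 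Your reduction buys something the paper's proof obscures: it explains why the two proofs in the paper arrive at the identical diagonal matrix ${\rm diag}\bigl(\tfrac{d_1}{\lambda+d_1},\ldots,\tfrac{d_n}{\lambda+d_n}\bigr)$ --- the two lemmas are not merely analogous statements but equalities of the same quantity, so proving both separately is redundant. What the paper's approach buys in exchange is self-containment: each sidebar lemma stands on its own spectral computation, with no dependence of one on the other, and no need to invoke commutativity or symmetry of the product $A(\lambda I_n + A)^{-1}$ (a point your argument does need, and correctly handles, since positive definiteness is asserted of a matrix that must first be known to be symmetric).
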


\begin{proof}
    Write $A = S D S^\rmT$, where $D= {\rm diag}(d_1,\ldots,d_n)$ is diagonal and $S$ is unitary.
    For all $i \in \{1, \ldots, n \},$ $d_i\ge0,$ and thus $\tfrac{d_i}{\lambda+d_i} < 1$. 
    Hence,
    \begin{align}
        \dfrac{1}{\lambda}
        \left( 
            D - D(\lambda I_n + D)^{-1} D 
        \right)
            =  
                {\rm diag} \left( \tfrac{d_1}{\lambda+d_1}, \ldots , \tfrac{d_n}{\lambda+d_n} \right) 
            < I_n.
        \label{eq:D_ineq2}
    \end{align}
    Pre-multiplying and post-multiplying \eqref{eq:D_ineq2} by $S$ and $S^\rmT$, respectively, yields \eqref{eq:Lemma_A_identity}.
\end{proof}

% \begin{lem}
%     \label{lemma:y_in_X}
%     Let $X \in \BBR^{p \times n}$ and $y \in \BBR^{1 \times n}$, and let $W \in \BBR^{p \times p}$ be positive definite.
%     Then,
%     \begin{align}
%         (I_n + X^\rmT W X)^{-1} y^\rmT 
%         \in
%         \SR([X^\rmT \ \ y^\rmT]).
%         \label{eq:y_in_X}
%     \end{align}
% \end{lem}

% \begin{proof}
%     Note that
%     \begin{align}
%         y^\rmT
%             &\in
%                 \SR([X^\rmT \ \ y^\rmT]) 
%             \nn \\
%             &=
%                 \SR [X^\rmT  \ \ y^\rmT + X^\rmT W X y^\rmT]
%             \nn \\
%             &=
%                 \SR
%                 \left(
%                 [X^\rmT  \ \ (I_n + X^\rmT W X) y^\rmT]
%                 \matl{cc}
%                     I_p + W X X^\rmT & 0\\
%                     0 & 1
%                 \matr
%                 \right) 
%             \nn \\
%             &=
%                 \SR([X^\rmT(I_p +  W X X^\rmT) \ \ (I_n + X^\rmT W X) y^\rmT])
%             \nn \\
%             &=
%                 \SR([(I_n + X^\rmT W X) X^\rmT \ \ (I_n + X^\rmT W X) y^\rmT])
%             \nn \\
%             &=
%                 (I_n + X^\rmT W X) \SR([ X^\rmT \ \ y^\rmT]), \nn
%     \end{align}
%     % 
%     which implies \eqref{eq:y_in_X}.
%     % 
%     %\EndProof
%     %
% \end{proof}

\clearpage 
\setcounter{equation}{0}
\renewcommand{\theequation}{S\arabic{equation}}
\setcounter{prop}{0}
\renewcommand{\theprop}{S\arabic{prop}}
% \clearpage
% \section{State Estimation in Linear Systems}
\section*{Sidebar:  RLS as a One-Step Optimal Predictor}
Consider the linear system
\begin{align}
    x_{k+1}
        &= 
            A_k x_{k} + B_k u_{k} + w_{1,k}, \\
    y_{k} 
        &=
            C_k x_{k} + w_{2,k},
\end{align}
where, for all $k\ge0,$
$x_k \in \BBR^n$, 
$u_k \in \BBR^m$,
$y_k \in \BBR^p$, and
$A_k,B_k,C_k$ are real matrices of appropriate sizes. 
The input $u_k$ and output $y_k$ are assumed to be measured.
The process noise $w_{1,k} \in \BBR^{n}$ and the sensor noise $w_{2,k} \in \BBR^p$ are zero-mean white noise processes with variances
% $\BBE[x_{0}] = \overline x_0$,
% $\BBE[(x_{0}-\overline x_0) (x_{0}-\overline x_0)^\rmT ] = P_0$, 
% $\BBE[w_{1,k}] = 0$,
$\BBE[w_{1,k} w_{1,k}^\rmT ] = Q_k$  and
% $\BBE[w_{2,k}] = 0$,
$\BBE[w_{2,k} w_{2,k}^\rmT ] = R_k$, respectively.
%
% where $w_{1,k} \sim \SN(0,Q_k) $, $w_{2,k} \sim \SN(0,R_k) $, 
% and $u_{k}$ is known.
%
The expected value of the initial state is assumed to be $\overline x_0$, and the variance of the initial state is $P_0$, that is, 
$\BBE[x_{0}] = \overline x_0$ and 
$\BBE[(x_{0}-\overline x_0) (x_{0}-\overline x_0)^\rmT ] = P_0$. 
The objective is to estimate the state $x_k$ given the measurements of $u_k$ and $y_k$.

To estimate $x_k$, consider the estimator
\begin{align}
    \hat x_{k+1}
        &= 
            A_k \hat x_{k} + B_k  u_{k} + 
            K_{k} ( y_{k} - C_k \hat x_{k} ), 
    % \\
    % \hat y_{k} 
    %     &=
    %         C_k \hat x_{k},
\end{align}
where 
$\hat x_k$ is the estimate of $x_k$ at step $k$ and $\hat x_0 = \overline x_0$.
The matrix $K_k$ is constructed as follows. 
%
% chosen to minimize the trace of \textit{state error covariance} $P_{k} \isdef \BBE[ e_{k} e_{k}^\rmT] \in \BBR^{n \times n}$. 
%
% Let $K_{k} \in \BBR^{n_1 \times p}$.
%
Define the \textit{state-estimate error} $e_{k} \isdef x_{k} - \hat x_{k}$ and the \textit{state error covariance} $P_{k} \isdef \BBE[ e_{k} e_{k}^\rmT] \in \BBR^{n \times n}$.
Then, $e_{k}$ and $P_{k}$ satisfy
\begin{align}
    e_{k+1} 
        &=
            (A_k -  K_{k}C_k) e_{k} + w_{1,k} -  K_{k} w_{2,k}, \\
    P_{k+1} 
        &=
            A_k P_{k} A_k^\rmT + Q_k + 
            K_{k} 
            \left(R_k + C_k P_{k} C_k^\rmT \right)
            K_{k}^\rmT  -
            A_k P_{k} C_k^\rmT K_{k}^\rmT - 
            C_k P_{k} A_k^\rmT .
    \label{eq:P_kp1_1SP}
\end{align}
\begin{prop}
    Let $P_{k+1}$ be given by \eqref{eq:P_kp1_1SP}.
    The matrix $K_k$ that minimizes ${\rm tr \ }P_{k+1}$ is given by
    \begin{align}
        K_k
            &=
                A_k P _{k} C_k^\rmT 
                \left(R_k + C_k P_{k} C_k^\rmT \right)^{-1},
        \label{eq:OSP_Kk}
    \end{align}
    and the minimized state-error covariance $P_k$ is updated as
    \begin{align}
        P_{k+1}
            &=
                A_k P_k A_k^\rmT + Q_k -
                A_k P_k C_k^\rmT
                \left(R_k + C_k P_{k} C_k^\rmT \right)^{-1}
                C_k P_k A_k^\rmT. 
        \label{eq:1SP_Pk_update}
    \end{align}
\end{prop}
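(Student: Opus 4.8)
The plan is to regard $\operatorname{tr} P_{k+1}$ as a quadratic function of the free gain $K_k$ and to minimize it by completing the square. First I would introduce the shorthand $S_k \isdef R_k + C_k P_k C_k^\rmT$ and $M_k \isdef A_k P_k C_k^\rmT$, and record at the outset that $S_k$ is positive definite: $R_k$ is a sensor-noise covariance and hence positive definite, while $C_k P_k C_k^\rmT$ is positive semidefinite, so $S_k$ is invertible. This is precisely what makes \eqref{eq:OSP_Kk} well defined and what the square-completion will rely on. In this notation \eqref{eq:P_kp1_1SP} becomes $P_{k+1} = A_k P_k A_k^\rmT + Q_k + K_k S_k K_k^\rmT - M_k K_k^\rmT - K_k M_k^\rmT$, where the two cross terms $-M_k K_k^\rmT$ and $-K_k M_k^\rmT$ are transposes of one another, so that $P_{k+1}$ is symmetric.

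The key algebraic step I would carry out is the identity
\begin{align}
    K_k S_k K_k^\rmT - M_k K_k^\rmT - K_k M_k^\rmT
        &= (K_k - M_k S_k^{-1}) S_k (K_k - M_k S_k^{-1})^\rmT - M_k S_k^{-1} M_k^\rmT,
\end{align}
verified by expanding the right-hand side and using $S_k^\rmT = S_k$. Substituting this into $P_{k+1}$ isolates all of the $K_k$-dependence in the single term $(K_k - M_k S_k^{-1}) S_k (K_k - M_k S_k^{-1})^\rmT$, which is positive semidefinite because $S_k > 0$. Taking the trace, this term contributes a nonnegative quantity that vanishes if and only if $K_k = M_k S_k^{-1}$, because $S_k > 0$ forces the factor $K_k - M_k S_k^{-1}$ to be zero whenever the whole term is. Hence the minimizer is $K_k = A_k P_k C_k^\rmT (R_k + C_k P_k C_k^\rmT)^{-1}$, which is \eqref{eq:OSP_Kk}. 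Evaluating $P_{k+1}$ at this $K_k$ deletes the square-completion term and leaves $A_k P_k A_k^\rmT + Q_k - M_k S_k^{-1} M_k^\rmT$, which is exactly \eqref{eq:1SP_Pk_update}.

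As a cross-check I would also note the matrix-calculus route: differentiating $\operatorname{tr} P_{k+1}$ with respect to $K_k$ gives $2 K_k S_k - 2 M_k$, and setting this to zero recovers the same gain, while the Hessian, governed by the positive-definite $S_k$, confirms that the stationary point is the global minimizer rather than a saddle. I expect the only genuine subtlety to be the uniqueness claim in the trace step, which hinges on the \emph{strict} positive definiteness of $S_k$; were $S_k$ merely semidefinite, the minimizing gain would fail to be unique and the clean identification \eqref{eq:OSP_Kk} would break down. The remaining work is routine expansion and careful bookkeeping of transposes.
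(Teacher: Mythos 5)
Your proof is correct. Note, however, that the paper itself does not prove this proposition---its ``proof'' is a citation to an external reference---so there is no in-paper argument to compare against; your completion-of-squares derivation is the classical, self-contained route to the one-step optimal predictor gain, and every step checks out: the square-completion identity is right, the trace of $(K_k - M_k S_k^{-1})S_k(K_k - M_k S_k^{-1})^\rmT$ is nonnegative and vanishes exactly when $K_k = M_k S_k^{-1}$ because $S_k > 0$, and substituting the minimizer yields \eqref{eq:1SP_Pk_update}. Two small observations. First, you silently corrected a typo in the paper: the last term of \eqref{eq:P_kp1_1SP} as printed is $C_k P_k A_k^\rmT$, which is $p \times n$ and not even conformable with the other terms; the correct term, which your symmetric form $-M_k K_k^\rmT - K_k M_k^\rmT$ uses, is $K_k C_k P_k A_k^\rmT$. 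It would be worth stating this correction explicitly rather than absorbing it into your notation. Second, your caveat about strict positive definiteness of $S_k$ is well placed: the sensor-noise covariance $R_k$ is in general only positive semidefinite, so $R_k > 0$ is a genuine standing assumption of the proposition; it holds in the paper's application, where $R_k = I_p$, and without it the gain would be nonunique (though the pseudoinverse version would still minimize the trace).
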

\begin{proof}
    See \cite{AseemPredictorCSM_SB}.
\end{proof}

Let $A_k = I_n$, $B_k = 0$, $C_k = \phi_k$, $Q_k = 0,$ and $R_k =  I_p$. Then, 
\begin{align}
    \hat x_{k+1}
        &= 
            \hat x_{k} + 
            P _{k} \phi_k^\rmT 
            \left(  I_p + \phi_k P _{k} \phi_k^\rmT  \right)^{-1}
            ( y_{k} - \phi_k \hat x_{k} ), 
    \label{eq:1SP_x_RLS}
    \\
    P_{k+1}
            &=
                P_k -
                P_k \phi_k^\rmT
                \left( I_p + \phi_k P _{k} \phi_k^\rmT  \right)^{-1}
                \phi_k P_k .
    \label{eq:1SP_P_RLS}
\end{align}
Note that \eqref{eq:theta_update_WithInverse}, \eqref{eq:P_update_WithInverse} with $\lambda=1$ have the same form as \eqref{eq:1SP_x_RLS}, \eqref{eq:1SP_P_RLS}. 
In particular, RLS without forgetting is the state estimator for the linear time-varying system with $A_k = I_n$, $B_k = 0$, $C_k = \phi_k$, $Q_k = 0,$ and $R_k =  I_p$.

\clearpage 
\setcounter{equation}{0}
\renewcommand{\theequation}{S\arabic{equation}}
\setcounter{prop}{0}
\renewcommand{\theprop}{S\arabic{prop}}

\section*{Sidebar:  RLS as a Maximum Likelihood Estimator }

Let $k\ge0$ and, for all $i\in\{0,1,\ldots,k\},$ consider the process
\begin{align}
    y_i
        =
            \phi_i \theta_{\rm true} + v_i,
    \label{eq:process_ML}
\end{align}
where
% $\theta \sim f_\Theta$ , 
% % for all $k \ge 0$, 
% $\nu_k \sim f_{V_k}$, 
% $y_k \in \BBR^p$, and % is the measurement, and 
$\theta_{\rm true} \in \BBR^n$ is the unknown parameter, 
$\phi_i \in \BBR^{p \times n}$ is the regressor matrix, 
$v_i \in \BBR^p$ is the measurement noise, and
$y_i \in \BBR^p$ is the measurement. 
The goal is to estimate $\theta_{\rm true}$ using the data  $(\phi_i)_{i=0}^k$ and $(y_i)_{i=0}^k$.

Let $\theta_{\rm true}$ be modeled by the $n$-dimensional, real-valued normal random variable $\Theta$ with mean $\theta_0 \in \BBR^n$ and covariance $(\lambda ^{k+1}R)\inv$, where $\lambda \in (0,1]$ and $R \in \BBR^{n \times n}$ is positive definite. 
For $\theta\in\BBR^n,$ the density of $\Theta$ is thus given by
\begin{align}
    f_{\Theta}(\theta)
        = 
            \dfrac{1}
            {\sqrt{(2 \pi)^n {\rm det}\, {(\lambda ^{k+1}R)\inv}}}
            {\rm exp} 
            [
                -\half
                (\theta-\theta_0)^\rmT 
                \lambda^{k+1} R
                (\theta-\theta_0)
            ].
    \label{eq:theta_pdf}
\end{align}
% 
% Let $\lambda \in (0,1]$, $\theta_0 \in \BBR^n$, and let $R \in \BBR^{n \times n}$ be positive definite. 
% Let $\Theta$ be a real, $n$-dimensional normal random variable with mean $\theta_0$ and covariance $(\lambda ^{k+1}R)\inv$, where $k \ge 0$.
%
%the meaning of k is not clear here.  
%   k affects the initial covariance of Theta. 
% Should Theta be Theta_k??
%
For all $i \in \{0,1, \ldots, k \}$, assume that $v_i$ is a sample of the zero-mean, $p$-dimensional, real-valued normal random variable $V_i$ with covariance $\lambda^{i-k} I_p$.
For $v_i\in\BBR^p,$ the density of $V_i$ is thus given by
\begin{align}
    f_{V_i}(v_i)
        = 
            \dfrac{1}{\sqrt{(2 \pi)^p \lambda^{i-k}}}
            {\rm exp} 
            (
                -\half
                v_i^\rmT 
                \lambda^{k-i} I_p
                v_i
            ).
    \label{eq:nu_i_pdf}
\end{align}
Assume that $V_0,V_1,\ldots,V_k$ are independent.

% % density $f_{V_k}.$
% Now, consider the process
% \begin{align}
%     y_k
%         =
%             \phi_k \theta + \nu_k,
%     \label{eq:process_ML}
% \end{align}

% Here you use theta to have different meaning from its use in (S1).  It seems wrong to call theta "the value of the RV Theta"----the RV Theta can assume and infinite number of values.  I suspect you are viewing theta as the value in (S3).  It is a constant real vector.  Since you do not know it, you replace it by the random variable Theta.  Then you use ML to obtain in estimate of theta.  If so, it seems it would be better to start with (S3) and THEN introduce Theta.

% where
% % $\theta \sim f_\Theta$ , 
% % % for all $k \ge 0$, 
% % $\nu_k \sim f_{V_k}$, 
% % $y_k \in \BBR^p$, and % is the measurement, and 
% $\theta$ is the unknown parameter modeled as the value of the random variable $\Theta$, 
% $\phi_k \in \BBR^{p \times n}$ is the regressor matrix, and
% $y_k \in \BBR^p$ is the measurement.
% Note that, for all $i \in \{0,1, \ldots, k \}$, $Y_i$ is a real, $p$-dimensional normal random variable and $\{Y_0, \ldots, Y_k \}$ are conditionally independent.

Since $\theta_{\rm true}$ and $v_i$ are modeled as normal random variables, it follows from \eqref{eq:process_ML} that 
$y_i$ is a sample of the $p$-dimensional, real-valued normal random variable $Y_i = \phi_i\theta_{\rm true}+V_i$.
% 
% Assume that, for all $i \in \{0,1, \ldots, k \}$, $y_i$ is the value of the real, $p$-dimensional normal random variable $Y_i$.
Note that, since $V_0, V_1, \ldots, V_k$ are independent, it follows that $Y_0, Y_1, \ldots, Y_k$ are independent. 
% 
% Next, let $\lambda \in (0,1]$, $k \ge 0$, and for all $i \ge 0$, suppose that $V_i$ is normally distributed with covariance $R_i = \lambda^{i-k} I_p$, that is,
% 
Using \eqref{eq:process_ML} and \eqref{eq:nu_i_pdf}, it thus follows that 
\begin{align}
    f_{Y_i \vert \theta }(y_i)
        = 
            \dfrac{1}{\sqrt{(2 \pi)^p \lambda^{i-k}}}
            {\rm exp} 
            [
                -\half
                (y_i-\phi_k \theta)^\rmT 
                \lambda^{k-i} I_p
                (y_i-\phi_k \theta) 
            ],
    \label{eq:Yi_given_theta_pdf}
\end{align}
where $f_{Y_i \vert \theta }(y_i)$ is the density of the random variable $Y_i$ conditions on $\Theta$ taking the value $\theta$.
% such that for all $i \ge 0$ and  $i \le k$, $p(v_i) = \SN(0,\lambda^{i-k} I_p)$, where $\lambda \in (0,1]$.

% Note that 
% \begin{align}
%     p(y_i \vert \theta)
%         = 
%             \SN(\phi_i \theta, \lambda^{i-k} I_p)
% \end{align}

% $\theta_0 \in \BBR^n$.

It follows from Bayes' rule \cite[p. 413]{Betrseka_prob_book} that
\begin{align}
    f_{\Theta \vert \{ y_0, \ldots, y_k \} }(\theta)
        &=
            \alpha \inv 
            f_{\Theta}(\theta) 
            \prod_{i=0}^kf_{Y_i \vert \theta} (y_i),
        % \\
        % &=
        %     \SN(\theta_0, P_0) 
        %     \prod_{i=0}^k \SN(\phi_i \theta, \lambda^{i-k} I_p)
        % \\
        % &\propto
        %     {\rm exp}
        %         \left(
        %             J_{{\rm ML},k}(\theta)
        %         \right),
    \label{eq:theta_given_y_seq_pdf}
\end{align}
where 
\begin{align}
    \alpha 
        \isdef
              \int_{\BBR^n }f_{\Theta}(\theta) 
                    \prod_{i=0}^k f_{Y_i \vert \theta} (y_i)\, 
                    \rmd \theta.
\end{align}
% 
%Note that $\alpha$ is a constant. 
%
Substituting \eqref{eq:theta_pdf} and \eqref{eq:Yi_given_theta_pdf} into \eqref{eq:theta_given_y_seq_pdf}, it follows that 
\begin{align}
    f_{\Theta \vert \{ y_0, \ldots, y_k \} }(\theta)
        =
            \beta
            {\rm exp}
            \left[
                \sum_{i=0}^k
                    -\half
                    \lambda^{k-i}
                    (y_i-\phi_k \theta)^\rmT 
                    (y_i-\phi_k \theta) 
                    -\half
                    \lambda^{k+1}
                    (\theta-\theta_0)^\rmT 
                    R
                    (\theta-\theta_0)
            \right],
    \label{eq:J_MLE}
\end{align}
where
\begin{align}
    \beta 
        \isdef
            \dfrac{1}
            {\alpha\sqrt{(2 \pi)^p \lambda^{i-k} }}
            \dfrac{1}
            {\sqrt{(2 \pi)^n {\rm det}\, {(\lambda ^{k+1}R)\inv}}}.
\end{align}
% \begin{align}
%     J_{{\rm ML},k}(\theta)
%         \isdef
%             -
%             \sum_{i=0}^k
%                 \lambda^{k-i} (y_{i} - \phi_{i} {\theta})^\rmT
%                 (y_{i} - \phi_{i} {\theta}) 
%                 - 
%                 \lambda^{k+1}  ({\theta}-\theta_0) ^\rmT
%                 R
%                 ({\theta}-\theta_0).
%     \label{eq:J_MLK}
% \end{align}

% $p(\theta) = \SN(\theta_0, R^{-1})$.

% , and $R \in \BBR^{n \times n}$ be positive definite

Finally, the \textit{maximum likelihood estimate} of $\theta_{\rm true}$ is given by the maximizer of \eqref{eq:J_MLE}, that is,
\begin{align}
    \theta_{{\rm ML}}
        =
            \underset{ \theta \in \BBR^n  }{\operatorname{argmax}} \
            f_{\Theta \vert \{ y_0, \ldots, y_k \} }(\theta).
        % =
        %     \underset{ \theta \in \BBR^n  }{\operatorname{argmin}} \
        %     J_{k}(\theta)
    \label{eq:theta_MLK}
\end{align}
In fact, $\theta_{\rm ML} = \underset{ \theta \in \BBR^n  }{\operatorname{argmin}} \
            J_{k}(\theta)$, 
where $J_k(\theta)$ is given by \eqref{eq:J_LS}.
% It follows from \eqref{eq:J_LS} and \eqref{eq:J_MLK} that $J_{{\rm ML},k}(\theta) = - J_k(\theta)$.
% Thus,
% \begin{align}
%     \theta_{{\rm ML},k}
%         =
%             \underset{ \hat\theta \in \BBR^n  }{\operatorname{argmin}} \
%             J_{k}(\theta),
% \end{align}
% Finally, it follows from \eqref{eq:theta_minimizer_def} and \eqref{eq:theta_MLK} that  $\theta_{k+1} = \theta_{\rm ML}$
Therefore, RLS with forgetting can be interpreted as the maximum likelihood estimator of the random variable $\Theta.$

% Here is the problem:  You have defined Theta to be a RV and then you says its pdf depends on k.  Theta is ONE RV but you have defined an infinite number of pdf's.  How can ONE RV have an infinite number of pdf's?  

% No, there are not infinite numbers of pdf's. 
% This framework is meant to give you an estimate from a batch of data. We assume we have (k+1) data points along with initial guess.
% We assign covariance to the initial guess and the noise terms and they depend on k. 
% But for each k, there is one Theta RV and (k+1) noise term RVs

\clearpage 
\setcounter{equation}{0}
\renewcommand{\theequation}{S\arabic{equation}}
\setcounter{prop}{0}
\renewcommand{\theprop}{S\arabic{prop}}

\section{Sidebar:  Toward Matrix Forgetting}

In \cite{Kreisselmeier_SB},  $P_k\inv$ is updated by
%
%using a ``matrix forgetting factor" $M_k.$
% two stabilized RLS algorithms are proposed.
% In particular, 
% % In \cite{kreisselmeier1990stabilized}, 
% % \cite{Kreisselmeier1990StabilizedIdentifiers}
% % the following generalization of \eqref{eq:Pk_recursive} is proposed.
%In particular,
\begin{align}
    P_{k+1}^{-1} 
        &=
            (I_n + M_k P_k)  P_k^{-1} +
            \phi_{k}^\rmT \phi_{k},
    \label{eq:Pk_Update_Kreiss}
\end{align}
where $M_k \in \BBR^{n \times n}$ is chosen to guarantee asymptotic stability and boundedness. 
Two choices of matrix $M_k$ are considered.
In the first case, 
\begin{align}
    M_k 
        &\isdef
            -(1-\lambda) (I-\alpha P_k)^N P_k^{-1},
    \label{eq:Kreiss_Stab1}
\end{align}
where
$\lambda \in (0,1)$, $\alpha>0$, and $N$ is an odd, positive integer. 
In the second case, 
\begin{align}
    M_k 
        &=
            -(1-\lambda) 
            (P_k^{-1}-\alpha I_n)^N 
            (P_k^{-1}+\beta I_n)^{-N}
            P_k^{-1},
    \label{eq:Kreiss_Stab2}
\end{align}
where $\lambda \in (0,1)$, $\alpha>0$, $\beta \ge 0$, and $N$ is an odd, positive integer. 
Note that RLS with constant forgetting is obtained by setting $M_k = (\lambda-1) P_k^{-1} $ in \eqref{eq:Pk_Update_Kreiss}.

\begin{prop}
    \label{prop:Kreiss_paper}
    Consider \eqref{eq:Pk_Update_Kreiss} with \eqref{eq:Kreiss_Stab1} or \eqref{eq:Kreiss_Stab2}.
    Let $P_0$ be symmetric and nonsingular.
    Then, the following statements hold:
    \begin{enumerate}
        \item 
            For all $k \ge 0$, $P_k$ is symmetric and nonsingular.
        \item
            If $P_0^{-1} \ge \tfrac{\alpha}{2}I_n$, then, 
            $P_k^{-1} = \alpha I$ is an asymptotically stable equilibrium of \eqref{eq:Pk_Update_Kreiss}.
        \item
            If $P_0^{-1} \ge \alpha I_n$, then, for all $k \ge 0$,
            $P_k^{-1} \ge \alpha I_n$. 
        \item
            If $P_0^{-1} \ge \alpha I_n$ and, for all $k \ge 0$, $\phi_k$ is bounded, then 
            $P_k^{-1}$ is bounded.
        \item
            If $P_0^{-1} \ge \alpha I_n$ and $\phi_k$ is persistently exciting, then there exists $k_0>0$ such that, for all $k\ge k_0,$
            $P_k^{-1} > \alpha I_n$.
            %for all $k \ge K$, where $K$ is the persistency interval.
    \end{enumerate}
\end{prop}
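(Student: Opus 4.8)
My plan is to collapse the matrix recursion to a scalar spectral map and then read off every claim from elementary properties of that map, lifting scalar inequalities to the Loewner order. Using $(I_n+M_kP_k)P_k^{-1}=P_k^{-1}+M_k$, both \eqref{eq:Kreiss_Stab1} and \eqref{eq:Kreiss_Stab2} turn \eqref{eq:Pk_Update_Kreiss} into $P_{k+1}^{-1}=g(P_k^{-1})+\phi_k^\rmT\phi_k$, where $g$ is the matrix function that fixes the eigenvectors of $P_k^{-1}$ (since $M_k$ is a function of $P_k$ and commutes with it) and acts on its eigenvalues by
\begin{align}
    g_1(x) = x - (1-\lambda)\frac{(x-\alpha)^N}{x^{N-1}}
    \quad\text{or}\quad
    g_2(x) = x\left[1 - (1-\lambda)\left(\frac{x-\alpha}{x+\beta}\right)^N\right].
    \nn
\end{align}
Because $\phi_k^\rmT\phi_k\ge0$, monotonicity of the Loewner order absorbs the one noncommuting term, so the whole analysis reduces to studying $g_1,g_2$ on $(0,\infty)$.

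For statement~1, symmetry is an immediate induction: $M_k$ is a symmetric matrix function of the symmetric $P_k$ and $\phi_k^\rmT\phi_k$ is symmetric, so $P_{k+1}^{-1}$, hence $P_{k+1}$, is symmetric. Nonsingularity I would get by checking that $g_1$ and $g_2$ do not vanish on the spectrum of $P_k^{-1}$; in the positive-definite regime used by statements~2--5 this is automatic because $g$ keeps all eigenvalues positive, so the only delicate point is a zero eigenvalue possibly created from an indefinite initialization, which requires locating the real roots of $g_1,g_2$ (e.g. $g_1(x)=0$ forces $(x/(x-\alpha))^N=1-\lambda$). I would treat this root analysis as the one genuinely technical part of statement~1.

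For statements~3--5 I would prove a single sandwich valid for $x\ge\alpha$: writing $t=x-\alpha\ge0$ and using $x^N-(x-\alpha)^N\le N\alpha x^{N-1}$ together with $1-r^N\ge1-r$ for $r=(x-\alpha)/(x+\beta)\in[0,1)$, one gets $\lambda x+(1-\lambda)\alpha\le g(x)\le\lambda x+c$, with $c=(1-\lambda)N\alpha$ (case~1) or $(1-\lambda)N(\alpha+\beta)$ (case~2). Lifted to matrices, for $P_k^{-1}\ge\alpha I_n$,
\begin{align}
    \lambda P_k^{-1} + (1-\lambda)\alpha I_n
        \le g(P_k^{-1})
        \le \lambda P_k^{-1} + c\,I_n.
    \nn
\end{align}
The lower bound plus $\phi_k^\rmT\phi_k\ge0$ gives statement~3 by induction ($P_{k+1}^{-1}\ge\lambda P_k^{-1}+(1-\lambda)\alpha I_n\ge\alpha I_n$). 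The upper bound gives $\|P_{k+1}^{-1}\|\le\lambda\|P_k^{-1}\|+c+B$ with $B\isdef\sup_k\|\phi_k^\rmT\phi_k\|$, a contraction bounding the sequence and proving statement~4. For statement~5, setting $E_k\isdef P_k^{-1}-\alpha I_n\ge0$ the lower bound yields $E_{k+1}\ge\lambda E_k+\phi_k^\rmT\phi_k$; accumulating over a window of length $N_0+1$ on which persistent excitation (Definition~\ref{def:persistent_Exc}, constants $N_0,\alpha_0,\beta_0$) gives $F_{k-N_0-1,\,k-1}\ge\alpha_0 I_n$ produces $P_k^{-1}\ge\alpha I_n+\lambda^{N_0}\alpha_0 I_n>\alpha I_n$ for all $k\ge N_0+1$. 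This step mirrors exactly the accumulation argument in Proposition~\ref{prop:Pkinv_bounds_VDF}.

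Statement~2 concerns the forgetting dynamics alone (i.e. $\phi_k=0$, the only sense in which $P_k^{-1}=\alpha I_n$ is an equilibrium of \eqref{eq:Pk_Update_Kreiss}), so $g$ fixes the eigenvectors and acts independently on each eigenvalue. The plan is to show that on $[\tfrac{\alpha}{2},\infty)$ the scalar map fixes $\alpha$, maps $[\tfrac{\alpha}{2},\infty)$ into itself, and obeys $\operatorname{sgn}(g(x)-\alpha)=\operatorname{sgn}(x-\alpha)$ with $|g(x)-\alpha|<|x-\alpha|$ for $x\ne\alpha$; monotone convergence of every eigenvalue then gives $P_k^{-1}\to\alpha I_n$, and the contraction of $\max_i|x_i-\alpha|$ supplies Lyapunov stability. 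I expect this to be the main obstacle: for odd $N\ge3$ one computes $g'(\alpha)=1$, so linearization is inconclusive and I must instead use the factorization $g(x)-\alpha=(x-\alpha)\bigl[1-(1-\lambda)(\,|x-\alpha|/(x+\alpha_\ast))^{N-1}(\cdots)\bigr]$ to see that the bracketed factor stays in $(0,1)$ precisely when $|x-\alpha|\le x+\alpha_\ast$, which is exactly the threshold $x\ge\tfrac{\alpha}{2}$ that singles out the hypothesis $P_0^{-1}\ge\tfrac{\alpha}{2}I_n$ as the basin of attraction.
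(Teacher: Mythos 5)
The first thing to note is that the paper does not prove Proposition \ref{prop:Kreiss_paper} at all: this is a literature-summary sidebar, and the paper's entire proof is the citation ``See \cite{kreisselmeier1990stabilized}.'' Your attempt therefore cannot be compared against any argument in the paper and must be judged on its own merits. Most of it holds up. The identity $(I_n+M_kP_k)P_k^{-1}=P_k^{-1}+M_k$ is correct, $M_k$ in both \eqref{eq:Kreiss_Stab1} and \eqref{eq:Kreiss_Stab2} is a rational function of $P_k$, so the update is indeed $P_{k+1}^{-1}=g(P_k^{-1})+\phi_k^\rmT\phi_k$ with $g$ acting eigenvalue-wise, and because $g(P_k^{-1})$ shares the eigenbasis of $P_k^{-1}$, scalar inequalities on the spectrum legitimately lift to the Loewner order. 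I checked your sandwich $\lambda x+(1-\lambda)\alpha\le g(x)\le \lambda x+c$ for $x\ge\alpha$ in both cases; it is correct, and statements 3, 4, and 5 follow exactly as you outline (the windowed accumulation in statement 5 is the same mechanism as in Proposition \ref{prop:Pkinv_bounds_VDF}). Your reading of statement 2 (the equilibrium only makes sense for $\phi_k\equiv0$) is the only sensible one, and the sign-preserving factorization $g(x)-\alpha=(x-\alpha)\,b(x)$ with $b(x)\in(\lambda,1]$ on $x>\alpha/2$, combined with monotone convergence of each eigenvalue to the unique fixed point $\alpha$, does deliver asymptotic stability even though $g'(\alpha)=1$ for odd $N\ge3$.

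The genuine gap is statement 1, and it is not the deferred ``root analysis''---it is that no analysis of $g$ alone can close it. The failure mode you never address is the interaction of an \emph{indefinite} $g(P_k^{-1})$ with the positive-semidefinite term $\phi_k^\rmT\phi_k$: adding a PSD matrix to a nonsingular symmetric matrix with a negative eigenvalue can produce a singular matrix. Concretely, take $n=1$ and $N=1$ (odd and positive, hence admissible) with \eqref{eq:Kreiss_Stab1}, so that the update is exactly $P_{k+1}^{-1}=\lambda P_k^{-1}+(1-\lambda)\alpha+\phi_k^2$, i.e., $g_1(x)=\lambda x+(1-\lambda)\alpha$ with no higher-order terms. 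Choose the symmetric, nonsingular initialization $P_0^{-1}=x_0\isdef -\bigl(1+(1-\lambda)\alpha\bigr)/\lambda<0$ and the bounded regressor $\phi_0=1$. Then
\begin{align}
    P_1^{-1}
        =
            \lambda x_0+(1-\lambda)\alpha+\phi_0^2
        =
            -1+1
        =
            0,
    \nonumber
\end{align}
so $P_1$ does not exist, even though $g_1(x_0)=-1\neq 0$, i.e., no root of $g_1$ was hit. (Analogous examples exist for every odd $N$, since $g_1$ maps $(-\infty,0)$ onto a set containing all negative values.) Hence, with only the hypotheses written in the sidebar---symmetry and nonsingularity of $P_0$---statement 1 is not provable by your route, and in fact the transcription must be implicitly relying on hypotheses or a formulation from \cite{kreisselmeier1990stabilized} that the sidebar omits. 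The natural repair is to assume $P_0>0$: both maps satisfy $g(x)\ge\lambda x$ for every $x>0$, because $g(x)=x\bigl[1-(1-\lambda)r(x)^N\bigr]$ with $r(x)=(x-\alpha)/x$ or $r(x)=(x-\alpha)/(x+\beta)$ satisfying $r(x)<1$, and $N$ odd gives $r(x)^N<1$; thus positive definiteness---hence symmetry and nonsingularity---propagates by induction, which is both simpler and stronger than the root-location study you proposed. I recommend restating statement 1 with $P_0>0$ (or deferring to the original reference, as the paper does) and keeping the rest of your argument as is.
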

\begin{proof}
    See \cite{kreisselmeier1990stabilized}.
    %\EndProof
\end{proof}

The main goal of \eqref{eq:Pk_Update_Kreiss} is stabilization of $P_k$ in the case where $\SeqPhi$ is not persistently exciting.
Proposition \ref{prop:Kreiss_paper} implies that $P_k$ remains bounded whether or not $\SeqPhi$ is persistent. 
However, \eqref{eq:Pk_Update_Kreiss} is not designed to implement forgetting. 
Furthermore, note that \eqref{eq:Pk_Update_Kreiss} requires the computation of the inverse of an $n \times n$ matrix at each step.

An alternative directional forgetting scheme given in \cite{Cao_SB} considers the update  
\begin{align}
    P_{k+1}^{-1} 
        &=
            M_k  P_k^{-1} +
            \phi_{k}^\rmT \phi_{k},
    \label{eq:Pk_update_Cao}
\end{align}
where $M_k \in \BBR^{n \times n}$ is designed to apply forgetting to a specific subspace.
In the case of a scalar measurement, that is, $p=1$, $P_k^{-1}$ is decomposed as
\begin{align}
    P_k^{-1}
        = 
            P_{1,k}^{-1} +
            P_{2,k}^{-1},
\end{align}
where $P_{1,k}^{-1}$ is chosen such that $P_{1,k}^{-1} \phi_k^\rmT = 0$, that is, $\phi_k^\rmT$ is in the null space of $P_{1,k}^{-1}$.
Next, forgetting is restricted to $P_{2,k}^{-1}$, that is, 
\begin{align}
    P_{k+1}^{-1} 
        &=
            P_{1,k}^{-1} +
            \lambda P_{2,k}^{-1} +
            \phi_{k}^\rmT \phi_{k}.
    \label{eq:Pk_update_Cao_RF}
\end{align}
The matrix $P_{2,k}^{-1}$ is chosen to be positive semidefinite with rank $1$ by using
\begin{align}
    P_{2,k}^{-1}
        \isdef
            P_k^{-1} \phi_k^{\rmT}
            \left( \phi_k P_{k}^{-1} \phi_k^\rmT \right)^{-1}
            \phi_k P_k^{-1},
    \label{eq:Pk_update_Cao_P2}
\end{align}
and thus $ P_{1,k}^{-1} = P_k^{-1} - P_{2,k}^{-1}.$
% Note that $P_{1,k}^{-1}$ is rank $n-1$ and $P_{2,k}^{-1}$ is rank $1$.
% Then, forgetting is applied to $P_{2,k}^{-1}$.
% 
Finally, it follows from \eqref{eq:Pk_update_Cao}, \eqref{eq:Pk_update_Cao_RF}, and \eqref{eq:Pk_update_Cao_P2} that
\begin{align}
    M_k
        =
            I_n - 
            (1-\lambda)
            \left( \phi_k P_{k}^{-1} \phi_k^\rmT \right)^{-1}
            P_k^{-1} \phi_{k}^\rmT \phi_{k}
\end{align}
and $P_{k+1}$ is computed as
\begin{align}
    \overline{P}_k
        &=
            \begin{cases}
                P_k +
                \dfrac{1-\lambda}{\lambda}
                \left( \phi_k P_{k}^{-1} \phi_k^\rmT \right)^{-1}
                \phi_{k}^\rmT \phi_{k}
                ,&
                \phi_k \neq 0, \\
                P_k 
                ,&
                \phi_k = 0, \\
            \end{cases}
        \\
    P_{k+1}
        &=
            \overline P_{k} -
            \overline P_{k} \phi_k
            (1+\phi_k^\rmT \overline P_{k} \phi_k)^{-1}
            \phi_k^\rmT \overline P_{k}.
\end{align}

It is shown in \cite{Cao_SB} that, if $P_k \inv$ is positive definite, then, for all $\lambda \in (0,1]$, $M_k P_k \inv$ is positive definite.
Furthermore, if, for all $k\ge 0$, $\phi_k$ is bounded, then there exists $\beta > 0$ such that, for all $k\ge0$, $P_k < \beta I_n $.

%No proof is given for the case where $\SeqPhi$ is not persistently exciting.

% If $\SeqPhi$ is persistently exciting and $\lambda = 1$, it follows from Proposition \eqref{eq:Pkinv_bounds_wo_forgetting}, that 

\clearpage 
\setcounter{equation}{0}
\renewcommand{\theequation}{S\arabic{equation}}
\setcounter{theo}{0}
\renewcommand{\thetheo}{S\arabic{theo}}

\section*{Sidebar: A  Cost Function for Variable-Direction RLS}
\begin{theo}\label{theorem_RLS_VDF}
    For all $k\ge0$, let $\phi_k \in \BBR^{p \times n}$ and $y_k \in \BBR^p$.
    Furthermore, let $R\in\BBR^{n\times n}$ be positive definite, let $\lambda \in (0,1],$ and, for all $k\ge0,$  let $P_k$ be given by 
    \begin{align}
        P_{k+1}^{-1}
            =
                \Lambda_k  P_{k}^{-1} \Lambda_k 
                +
                \phi_k ^{\rmT}\phi_k,
        \label{eq:SB_Pkinv_recursive_VDF}
    \end{align}
    where $P_0 \isdef R\inv$ and let $\Lambda_k$ be given by \eqref{eq:Lambda_k_def}.
    In addition, let $\theta_0 \in \BBR^n$, and define
    \begin{align} 
        J_k({\hat\theta})
            &\isdef
                \sum_{i=0}^k
                (y_{i} - \phi_{i} {\hat\theta})^\rmT
                (y_{i} - \phi_{i} {\hat\theta}) 
                + 
                ({\hat\theta}-\theta_0) ^\rmT
                R_k
                ({\hat\theta}-\theta_0),
        \label{eq:J_LS_VDF}
    \end{align}
    where, for all $k \ge 0$,
    \begin{align}
        R_k
            =
                R_{k-1} + \Lambda_k P_{k}^{-1} \Lambda_k - P_{k}^{-1},
        \label{eq:Rk_recursive}
    \end{align}
    where $R_{-1} \isdef R$.
    Then, for all $k \ge 0$, \eqref{eq:J_LS_VDF} has a unique global minimizer
    \begin{align}
        \theta_{k+1}
            =
                \underset{ \hat\theta \in \BBR^n  }{\operatorname{argmin}} \
                J_k({\hat\theta}),
       \label{eq:theta_minimizer_def_VDF}
    \end{align}
    which is given by 
    \begin{align}
        \theta_{k+1}
            =
                \theta_k +
                P_{k+1} \phi_k^\rmT (y_k -  \phi_k \theta_k) +
                P_{k+1} (R_k - R_{k-1}) (\theta_0-\theta_k).
        \label{eq:theta_update_VDF}
    \end{align}
\end{theo}

\begin{proof}
    Note that, for all $k \ge 0$,
    \begin{align}
        J_k({\hat \theta} )
            =
                \hat \theta ^\rmT A_k \hat \theta + 
                \hat \theta ^\rmT b_k  + 
                c_k, \nn
    \end{align}
    where 
    \begin{align}
        A_k
            &\isdef
                \sum_{i=0}^k  \phi_i^\rmT \phi_i + R_k, 
            \label{eq:A_k_VDF}
            \\
        b_k
            &\isdef
                \sum_{i=0}^k - \phi_i^\rmT y_i -
                R_k \theta_0,
            \label{eq:b_k_VDF}
            \\
        c_k
            &\isdef
                \sum_{i=0}^k 
                y_i^\rmT y_i +
                \theta_0^\rmT R_k \theta_0.\nn
    \end{align}
    Using \eqref{eq:Rk_recursive}, \eqref{eq:A_k_VDF}, and \eqref{eq:b_k_VDF}, it follows that, for all $k \ge 0,$
    \begin{align}
        A_k 
            % &=
            %     A_{k-1} + R_k - R_{k-1} + \phi_k^\rmT \phi_k,
            % \nn \\
            &=
                A_{k-1} + \Lambda_k P_{k}^{-1} \Lambda_k - P_{k}^{-1} + \phi_k^\rmT \phi_k, 
        \label{eq:A_k_recursive_VDF}
        \\
        b_k
            &=
                b_{k-1} - \phi_k^\rmT y_k - (R_k - R_{k-1}) \theta_0,
        \label{eq:b_k_recursive_VDF}
    \end{align}
    where $A_{-1} \isdef R$ and $b_{-1} \isdef -R \theta_0$.
    % Note that, the recursive equations \eqref{eq:Atheta_TF} and \eqref{eq:A_k_recursive_VDF} are same if $A_k = P_{k+1}\inv$.
    Using \eqref{eq:SB_Pkinv_recursive_VDF} and \eqref{eq:A_k_recursive_VDF}, it follows that, for all $k \ge 0$,
    \begin{align}
        A_k - P_{k+1}^{-1}
            &=
                A_{k-1}  - P_{k}\inv
            \nn \\
            &=
                A_{-1} - P_0\inv
            \nn \\
            &=
                0. \nn
    \end{align}
    It follows from \eqref{eq:Atheta_TF_SVD} that, for all $k \ge 0$, $P_{k+1}\inv$ is positive definite, and thus $A_k$ is positive definite. 
    Furthermore, for all $k \ge 0,$ $A_k$ is given by %
    \begin{align}
        A_k 
            =
                \Lambda_k A_{k-1} \Lambda_k + \phi_k^\rmT \phi_k. \nn
    \end{align}
    % % 
    % Note that $P_0\inv = R$
    % \begin{align}
    %     P_1\inv = A_0
    %         &=
    %             \phi_0^\rmT \phi_0 + R_0
    %         \\
    %         &=
    %             A_{-1} + R_0 - R_{-1} + \phi_0^\rmT \phi_0 
    %         \\
    %         &=
    %             A_{-1} + \Lambda_0 P_0\inv \Lambda_0 - P_0\inv + \phi_0^\rmT \phi_0 
    %         \\
    %         &=
    %             \Lambda_0 P_0\inv \Lambda_0 + \phi_0^\rmT \phi_0
    %         \\
    %     R_0
    %         &=
    %             R_{-1} + \Lambda_0 P_{0}^{-1} \Lambda_0 - P_{0}^{-1}
    % \end{align}    
    % Therefore, $A_{-1} = R_{-1} = P_0\inv = R$.
    
    Finally, since $A_k$ is positive definite, it follows from Lemma 1 in \cite{Aseem_RLS_CSM_SB} that 
    \begin{align}
        \theta_{k+1}
            &=
                -A_k \inv b_k 
            \nn \\
            &=
                -A_k \inv 
                (b_{k-1} - \phi_k^\rmT y_k - (R_k - R_{k-1}) \theta_0)
            \nn \\
            &=
                -A_k \inv 
                (-A_{k-1} \theta_k - \phi_k^\rmT y_k - (R_k - R_{k-1}) \theta_0)
            \nn \\
            &=
                A_k \inv 
                ((A_{k} - R_k + R_{k-1} - \phi_k^\rmT \phi_k) \theta_k +
                \phi_k^\rmT y_k + 
                (R_k - R_{k-1}) \theta_0)
            \nn \\
            &=
                A_k \inv 
                (A_{k} \theta_k +
                \phi_k^\rmT (y_k -  \phi_k \theta_k) +
                (R_k - R_{k-1}) (\theta_0-\theta_k)
            \nn \\
            &=
                \theta_k +
                A_k \inv \phi_k^\rmT (y_k -  \phi_k \theta_k) +
                A_k \inv (R_k - R_{k-1}) (\theta_0-\theta_k)
            \nn \\
            &=
                \theta_k +
                P_{k+1} \phi_k^\rmT (y_k -  \phi_k \theta_k) +
                P_{k+1} (R_k - R_{k-1}) (\theta_0-\theta_k).
            \nn 
    \end{align}
    Hence, \eqref{eq:theta_update_VDF} is satisfied. 
        % 
    % Next, note that, for all $k \ge 0$,
    % \begin{align}
    %     A_k 
    %         =
    %             A_{k-1} + R_k - R_{k-1} + \phi_k^\rmT \phi_k,
    % \end{align}
    % where 
    % $A_{-1} = R$.
    % It follows from \eqref{eq:Rk_recursive} that 
    % \begin{align}
    %     A_k 
    %         =
    %             A_{k-1} + \Lambda_k P_{k}^{-1} \Lambda_k - P_{k}^{-1} + \phi_k^\rmT \phi_k,
    % \end{align}
    % Since $A_k$ is positive definite, it follows from Lemma 1 in \cite{AseemRLS} that the minimizer $\theta_{k+1}$ of $J_k$ satisfies \eqref{eq:Atheta_b}.
    % 
    %\EndProof
\end{proof}

Using $R_k-R_{k-1}= \Lambda_k A_{k-1} \Lambda_k - A_{k-1} $, it follows that \eqref{eq:theta_update_VDF} can be implemented without computing $P_k\inv$.

\end{document}